\def\bR{\mathbb{R}}
\def\cD{\mathcal{D}}
\def\cM{\mathcal{M}}
\def\cR{\mathcal{R}}
\def\cH{\mathcal{H}}
\def\longrightarrow{\to}
\def\cs{{\rm cs}}
\def\cu{{\rm cu}}
\def\s{{\rm s}}
\def\u{{\rm u}}
\def\ss{{\rm ss}}
\def\uu{{\rm uu}}
\newtheorem{thmy}{Theorem}
\newtheorem{theo}{Theorem}[section]
\newtheorem{theorem}[theo]{Theorem}
\newtheorem{corollary}[theo]{Corollary}
\newtheorem{proposition}[theo]{Proposition}
\newtheorem{lemma}[theo]{Lemma}
\newtheorem*{claim}{Claim}
\theoremstyle{definition}
\newtheorem{remark}[theo]{Remark}
\newtheorem*{remark*}{Remark}
\newtheorem{definition}[theo]{Definition}
\newcommand{\eqdef}{:=}
\DeclareMathOperator{\Per}{Per}
\DeclareMathOperator{\card}{\#}
\DeclareMathOperator{\diam}{diam}
\DeclareMathOperator{\cl}{cl}
\title[Geodesic flows modeled by expansive flows]{Geodesic flows modeled by expansive flows}
\subjclass[2010]{%
53D25 
53C22, 
37D40, 
37D25, 
37D35, 
28D20, 
28D99
}
\thanks{
This paper was partially supported by CNPq (Brazil).
}
\author[K. Gelfert]{Katrin Gelfert}
\address{Instituto de Matem\'atica, Universidade Federal do Rio de Janeiro, Cidade Universit\'aria - Ilha do Fund\~ao, Rio de Janeiro 21945-909,  Brazil}
\email{gelfert@im.ufrj.br}
\author[R.~O.~Ruggiero]{Rafael O.~Ruggiero}
\address{Departamento de Matem\'atica PUC-Rio, Rua Marqu\'es de S\~ao Vicente 225, Rio de Janeiro 22543-900, Brazil}
\email{rorr@mat.puc-rio.br}
\begin{document}

\begin{abstract}
Given a smooth compact surface without focal points and of higher genus, it is shown that its geodesic flow is semi-conjugate to a continuous expansive flow with a local product structure such that  the semi-conjugation preserves time-parametrization. It is concluded that the geodesic flow has a unique measure of maximal entropy.
\end{abstract}

\maketitle

\tableofcontents

\section{Introduction}

The resemblance between the geodesic flow of a compact surface of genus greater than one and without conjugate points and the geodesic flow of a compact surface with constant negative
curvature has been source of deep research in geometry and topological dynamics from the beginning of the 20th century. The first and perhaps most influential work in the subject
is the  work by Morse \cite{kn:Morse} showing that every globally minimizing geodesic in the universal covering of a compact surface of genus greater than one is ``shadowed'' by a geodesic in the hyperbolic plane. This beautiful result strongly suggested that the geodesic flow of a compact surface without conjugate points and of higher genus should be semi-conjugate to
the geodesic flows of a constant negative curvature surface.

Recall that two continuous flows $\phi_{t}\colon Y \longrightarrow Y$ and $\psi_{t}\colon X \longrightarrow X$ acting on compact metric spaces $Y$ and $X$ are \emph{semi-conjugate} if there exists a continuous and surjective map $\chi\colon Y\longrightarrow X$ such that for each $p \in Y$ there exists a
continuous and surjective reparametrization $\rho_{p}\colon {\mathbb R} \longrightarrow {\mathbb R}$ such that
$$
	(\chi\circ\phi_{t})(p) = (\psi_{\rho_{p}(t)}\circ\chi)(p).
$$
The map $\chi$ is called a \emph{semi-conjugacy}; if $\chi$ is a homeomorphism then it is called a \emph{conjugacy} and the flows are said to be \emph{semi-conjugate}.  Observe that $\chi$ maps orbits  to orbits.
We say that $\phi_t$ is \emph{time-preserving semi-conjugate} to $\psi_t$ if $\rho_p$ is the identity for every $p$.
The existence of conjugacies to nearby flows in a $C^{1}$ neighborhood characterizes \emph{structurally stable
flows}, that is, Axiom A flows and, in particular, Anosov flows. Structural stability theory was developed in the 60s and 70s and its main ideas  paved the path
to study also systems which show only weaker forms of stability like topological stability. 
A system is \emph{$C^{k}$-topologically stable} if it is semi-conjugate to any nearby $C^{k}$ system.
Of course, $C^{k}$-structurally stable systems are $C^{k}$-topologically stable. But the converse is not true and there are many well known counterexamples, many of them in the category
of expansive, non-hyperbolic systems.

\begin{definition}{\rm
A continuous flow $\psi_{t}\colon X \longrightarrow  X$ without singular points on a  metric space $(X,d)$ is \emph{expansive}%
\footnote{Observe that, in fact, our definition of expansivity made above is slightly stronger than in~\cite{kn:BW72}, however it appears naturally in the context of expansive geodesic flows (see, for example,~\cite{kn:Rug94,kn:Ruggiero97}). In the context of geodesic flows on compact manifolds without conjugate points both definitions are equivalent.} 
if there exists $\varepsilon>0$ such that for every $x \in X$ and 
for every $y \in X$ for which there exists a continuous surjective function $\rho\colon \bR \longrightarrow \bR$ with $\rho(0)=0$ satisfying
\[
 	d(\psi_{t}(x), \psi_{\rho(t)}(y)) \leq \varepsilon
\]	
for every $t \in {\mathbb R}$ we have $\psi_{t(y)}(x)=y$ for some $\lvert t(y) \rvert <\varepsilon$. We call such $\varepsilon$ an \emph{expansivity constant}.
}\end{definition}

The definition of expansive homeomorphism introduced by Bowen is previous to the above definition for flows (see also~\cite{kn:BW72}), and Bowen's
study of expansive homeomorphisms showed how to find weak stability properties in non-hyperbolic geodesic flows
using a dynamical approach rather than the global geometry approach suggested by Morse's ideas.

Bowen \cite{Bow:73c} and Bowen and Walters \cite{kn:BW72} pointed out that expansive systems with a local product structure are $C^{0}$-topologically stable (see also \cite{Tho:91,Fra:77} in the case of flows);
Lewowicz \cite{kn:Lewowicz} and Hiraide \cite{kn:Hiraide} showed that expansive homeomorphisms of compact surfaces have an ``almost" local product structure.
Paternain \cite{kn:MPaternain} and Inaba and Matsumoto \cite{kn:IM} extended Lewowicz's work to show that expansive geodesic flows of compact surfaces have a local
product structure and hence that they are $C^0$-topologically stable. Ruggiero \cite{kn:Ruggiero97} showed that expansive geodesic flows of compact
manifolds without conjugate points have a local product structure, which also implies topological stability.

The global geometry point of view of weak stability theory of geodesic flows enjoyed an large development in the 70s. One can mention the works of Eberlein \cite{kn:Eberlein} and Eberlein and O'Neill \cite{kn:EO} about visibility manifolds extending Morse's~\cite{kn:Morse} work: quasi-geodesics in visibility manifolds are close to true geodesics. The theory of visibility manifolds went further and introduced a whole body of tools to study coarse hyperbolic geometry of manifolds.
Thurston and Gromov \cite{kn:Gromov} introduced the notion of  hyperbolic groups and not only extended Eberlein's theory of visibility manifolds but created a rich theory to study coarse hyperbolic geometry in very general metric spaces. All the above results suggest that the family of geodesic flows which are semi-conjugate to a geodesic flow of a manifold with negative curvature could be much larger than only the family of geodesic flows of compact surfaces without conjugate points.

In the 80s,  Ghys \cite{kn:Ghys} proved the existence of a semi-conjugacy between the
geodesic flow of a compact surface without conjugate points and of genus greater than one and a hyperbolic \emph{geodesic} flow. 
However, in general, this semi-conjugacy is \emph{not} time-preserving, that is, the reparametrization $\rho_{p}$ is not the identity. While any semi-conjugacy between discrete systems is time preserving by definition, the existence of
such a semi-conjugacy for flows is a much more delicate issue. Time-preserving semi-conjugacies are intimately related to
spectral rigidity problems in continuous dynamical systems and in particular in Riemannian geometry: two geodesic flows which are time-preserving semi-conjugate have the same marked length spectrum of periodic geodesics. Indeed, the work of Otal \cite{kn:Otal}, Croke \cite{kn:Croke}, and Croke and Fathi \cite{kn:CF}, shows that if this is the case of the geodesic flows of two compact surfaces without
conjugate points and of genus greater than one then both surfaces are in fact isometric. 

In view of the above, the main result result of the present article looks perhaps surprising. 

\begin{thmy} \label{quotient}
Let $(M,g)$ be a $C^{\infty}$ compact connected boundaryless surface without focal points and genus greater than one. Let $\phi_{t}\colon T_{1}M \longrightarrow T_{1}M$ be its geodesic flow.

Then there exists a compact 3-manifold $X$ diffeomorphic to $T_{1}M$ and a continuous  flow
$\psi_{t} \colon X \longrightarrow X$ which is expansive and has a local product structure such that $\phi_t$ is time-preserving semi-conjugate to $\psi_t$.
\end{thmy}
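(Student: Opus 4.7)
My plan is to realize $\psi_t$ as the quotient of $\phi_t$ by the equivalence relation that collapses flat strips to single orbits. Because $(M,g)$ has no focal points, $T_1M$ carries continuous weak stable and weak unstable foliations $\mathcal{F}^{\cs}$ and $\mathcal{F}^{\cu}$, whose leaves are the $\phi$-saturations of Busemann horocycles. I would define $p\sim q$ to mean that the geodesics through $p$ and $q$ are bi-asymptotic; equivalently, that their $\phi$-orbits lie in a common connected component of $\mathcal{F}^{\cs}(p)\cap\mathcal{F}^{\cu}(p)$. In the universal cover this identifies exactly those pairs of geodesics that together bound a flat strip.

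First I would verify that $\sim$ is a closed, $\phi_t$-invariant equivalence relation, so that the quotient $X \eqdef T_1M/\!\sim$ inherits a well-defined continuous flow $\psi_t$ via the continuous surjection $\chi\colon T_1M\to X$. Since each $\sim$-class is a union of whole $\phi_t$-orbits carrying the same time parameter, no reparametrization is introduced and $\chi$ is tautologically time-preserving. Moreover $\mathcal{F}^{\cs}$ and $\mathcal{F}^{\cu}$ descend to continuous foliations $\overline{\mathcal{F}}^{\cs}, \overline{\mathcal{F}}^{\cu}$ on $X$, and these, by construction, meet only along the single orbit through each point.

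The main obstacle is to show that $X$ is a topological $3$-manifold diffeomorphic to $T_1M$. Here the genus greater than one hypothesis enters decisively via the visibility property, which holds for compact surfaces without focal points of higher genus by the Eberlein--O'Neill theory. Visibility forces every flat strip in $\widetilde M$ to have uniformly bounded width, so each strip descends to a compact invariant annulus or M\"obius band in $T_1M$, bordered by two periodic extremal orbits. On any transverse disk $D^2$ to an orbit inside such a strip, the strip appears as an embedded arc through the center, and collapsing this arc to a point leaves $D^2$ homeomorphic to a disk. Thus the quotient is locally modelled on $T_1M$ at every point, and a gluing argument together with a smoothing of the tame singular set of $\chi$ yields the diffeomorphism $X\cong T_1M$. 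I expect this topological step to be the most delicate part of the argument, since one must ensure that the identifications arising from distinct flat strips do not accumulate in a pathological way in $T_1M$.

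Finally, for expansiveness and local product structure I would exploit that $\overline{\mathcal{F}}^{\cs}$ and $\overline{\mathcal{F}}^{\cu}$ are continuous foliations of complementary dimension meeting transversally along a single orbit at every point of $X$. Transversality gives product charts adapted to $\psi_t$ directly, providing the local product structure. For expansiveness, arguing as in the topological characterization of expansive geodesic flows of M.~Paternain and Inaba--Matsumoto, two orbits remaining in a common small tube for all $t\in\bR$ up to reparametrization must agree on both $\overline{\mathcal{F}}^{\cs}$ and $\overline{\mathcal{F}}^{\cu}$ leaves; lifting to $T_1M$, the corresponding pair of geodesics spans a flat strip, hence lies in a single $\sim$-class and determines the same orbit in $X$. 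The expansivity constant is then controlled by the uniform width bound on flat strips together with the transversality angle of the descended foliations.
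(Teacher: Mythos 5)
Your overall strategy is exactly the paper's: collapse each flat strip to a single orbit, check the quotient is a topological $3$-manifold, and then verify expansiveness and local product structure. The definition of $\sim$, the observation that the quotient flow is time-preserving, and the identification of the manifold-structure step as the crux all match the paper. However, your execution of the two hard steps leaves genuine gaps.

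For the manifold structure, you collapse \emph{one} arc in a transverse disk $D^2$ and appeal to the fact that $D^2/\text{arc}$ is again a disk, but the quotient map collapses \emph{every} strip meeting $D^2$, and nearby strips may accumulate on the central one. You flag this issue but do not resolve it. The paper's resolution relies on two ingredients you do not use: the fact that nontrivial strips are periodic, hence countable, hence the set of expansive (trivial-strip) points is open and \emph{dense}; and, crucially, a product parametrization of the local cross section built from a dense orbit with trivial strip (Proposition~\ref{productstructure} and its Claim), with cross-section boundary chosen through expansive points (Lemma~\ref{boundarycurve}). Without something along these lines, ``locally modelled on $T_1M$ at every point'' and ``a smoothing of the tame singular set of $\chi$'' are not justified; one would otherwise have to invoke a Moore-type theorem on cell-like upper semicontinuous decompositions and still verify upper semicontinuity, which is precisely what the density of expansive points buys.

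For expansiveness, you argue that two orbits in a small tube in $X$ lift to geodesics spanning a flat strip. This needs a quantitative bridge: closeness in a (quotient) metric on $X$ must force closeness of the Sasaki distance of the lifts in $T_1\tilde M$, uniformly along the orbit. This is not automatic, because $\chi$ identifies sets of bounded-but-nonzero diameter. The paper proves exactly this via the exhaustion argument (Lemmas~\ref{compactexhaustion}, \ref{exhaustionX}) and the equivariance of $\bar\chi$ with respect to deck transformations, first establishing expansiveness for $\bar\psi_t$ on $\bar X$ and then descending to $X$. Your sketch, working directly in $T_1M$ and appealing to ``transversality angle,'' skips this metric comparison step, and the argument is incomplete without it.
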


So expansive flows arise as models of geodesic flows of compact surfaces without focal points and of genus greater than one up to
time-preserving semi-conjugacies. 
By the above mentioned spectral rigidity, we cannot expect that the  the model flow $\psi_{t}$ is the geodesic flow of a compact surface if the original flow is non-expansive. This is why Ghys's method \cite{kn:Ghys}  does not provide a \emph{time-preserving} semi-conjugacy.  Indeed,
if the model flow $\psi_{t}$ was a geodesic flow then we would know that the corresponding surface would have no conjugate points by the result
of Paternain \cite{kn:MPaternain}. Therefore, we could apply the spectral rigidity results mentioned above to show that both surfaces would have to be isometric.
But the initial geodesic flow will not be expansive in general due to flat strips  in the unit tangent bundle (see Section~\ref{sec:2}).

The existence of a time-preserving semi-conjugacy to an expansive flow has some interesting applications in topological dynamics.
In fact, this expansive flow inherits most of the features of the topological hyperbolic dynamics which are found in the theory of expansive geodesic flows in manifolds without conjugate points (see Theorem~\ref{expansivity} and compare~\cite{kn:Ruggiero97}). 
These results play a crucial role in the proof of the following result which is related to the thermodynamical formalism and ergodic optimization and gives a first  example of  possible applications of Theorem~\ref{quotient}.

\begin{thmy} \label{maximalentropy}
The geodesic flow of a  $C^{\infty}$ compact connected boundaryless surface without focal points and of genus greater than one has a unique (hence ergodic) invariant probability measure of maximal entropy.
\end{thmy}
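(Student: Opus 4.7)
The strategy is to transfer uniqueness of the measure of maximal entropy from the expansive factor provided by Theorem~\ref{quotient} to the geodesic flow via the time-preserving semi-conjugacy $\chi\colon T_1M\to X$. Since $\psi_t$ is expansive and has a local product structure on a compact space, it satisfies Bowen's specification property by the standard shadowing argument (the same that works for basic sets of Axiom~A flows). Bowen's uniqueness theorem for equilibrium states of flows with specification then produces a unique measure of maximal entropy $\nu$ for $\psi_t$. This takes care of the ``model'' side of the picture.

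The next step is to compare entropies across $\chi$. The fibers of $\chi$ are trivial except over the orbits obtained by collapsing flat strips of $\phi_t$; inside such a strip parallel geodesics remain at constant distance for all time, so the relative topological entropy of each fiber is zero. The Ledrappier--Walters variational principle applied to the factor map $\chi$ therefore gives
\[
  h_\mu(\phi_1)\;=\;h_{\chi_*\mu}(\psi_1)
\]
for every $\phi$-invariant probability measure $\mu$. In particular $h_{\mathrm{top}}(\phi)=h_{\mathrm{top}}(\psi)$, and every measure of maximal entropy $\mu$ for $\phi_t$ satisfies $\chi_*\mu=\nu$.

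To finish I would show that the lift of $\nu$ is unique. Let $F\subset T_1M$ be the union of the (non-degenerate) flat strips and $\bar F=\chi(F)\subset X$ its image. The key claim is that $\nu(\bar F)=0$. Using the structure of flat strips on a compact higher-genus surface without focal points (the ``flat part'' of the dynamics, developed in the body of the paper, reduces to a disjoint union of cylinders over closed geodesics carrying essentially one-dimensional recurrent behavior), one obtains $h_{\mathrm{top}}(\psi|_{\bar F})=0$. Since $h_{\mathrm{top}}(\psi)>0$, the variational principle forces $\nu(\bar F)=0$. Off $\bar F$ the map $\chi$ is injective on orbits, so any $\phi$-invariant lift of $\nu$ is uniquely determined, yielding uniqueness of the MME for $\phi_t$; ergodicity is then automatic, since the unique MME must coincide with each of its ergodic components. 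The hardest step is the entropy gap $h_{\mathrm{top}}(\psi|_{\bar F})<h_{\mathrm{top}}(\psi)$: this is where the geometric structure of flat strips on compact surfaces without focal points and of higher genus enters in an essential way, and where the specific hypotheses of the theorem are used.
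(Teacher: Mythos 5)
Your proposal is correct in outline and reaches the conclusion, but it diverges from the paper's proof in two interesting places. The common skeleton is the same: use Theorem~\ref{quotient} to pass to the expansive quotient flow $\psi_t$, get uniqueness of the MME $\nu_\psi$ there from expansivity + local product structure (the paper uses Franco's flow version rather than Bowen's discrete-time theorem, with the precise periodic-orbit specification property after Oka; your citation should be to Franco, not Bowen), and then lift uniqueness back through $\chi$. The first divergence is in the lifting step: you invoke Ledrappier--Walters directly to get $h_\mu(\phi_1)=h_{\chi_*\mu}(\psi_1)$ from the zero-entropy fibers, whereas the paper black-boxes this in the Buzzi--Fisher--Sambarino--V\'asquez theorem (Theorem~\ref{the:Buzzietal}), whose two hypotheses --- zero fiber entropy and $\nu_\psi$-full measure of singleton fibers --- are checked in Lemmas~\ref{lem:lembowlocl} and~\ref{lem:bigmeasure}. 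These are essentially equivalent routes; yours is slightly more self-contained and transparent. The second and more substantial divergence is how you prove $\nu_\psi(\bar F)=0$. You exploit the fact (recorded in Section~\ref{stripproperties}, due to Coudene--Schapira and Schr\"oder) that nontrivial strips are periodic, hence $\bar F$ is a countable union of periodic $\psi$-orbits; any ergodic measure giving it positive mass would be atomic on a periodic orbit and hence have zero entropy, contradicting $h(\psi)>0$. The paper instead works with the larger closed invariant set $\cH$ (where the stable and unstable Green bundles coincide), shows $h(\phi_1,\cH)=0$ by Ruelle's inequality since the Lyapunov exponent vanishes there, transfers this through $\chi$, and argues by ergodicity. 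Your argument is cleaner and uses less differentiable machinery, but it is tied to the periodicity of strips; the paper's argument is more robust and would survive a hypothetical situation with nonperiodic strips (as could occur without the flat strip theorem). One minor point to tighten: $\bar F$ is not compact, so ``$h_{\mathrm{top}}(\psi|_{\bar F})=0$'' should be rephrased as the statement that every ergodic measure supported on $\bar F$ sits on a single periodic orbit and hence has zero entropy; with that fix, the argument is complete.
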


Theorem \ref{maximalentropy} partially extends Knieper's result \cite{Kni:98} about the uniqueness of maximal entropy measures for  geodesic flows in rank one manifolds with nonpositive curvature. Though his result is valid in any dimension, we would like to point out that in the case of surfaces our setting is more general (there exist surfaces without focal points which admit some regions with positive curvature).

Our approach to show Theorem \ref{maximalentropy}  follows the classical thermodynamical formalism developed by Bowen \cite{Bow:74,Fra:77} for expansive topological dynamical systems in compact metric spaces. Since the geodesic flow in Theorem \ref{maximalentropy}  might not be expansive we combine Theorem \ref{quotient}  with some recent results by Buzzi \emph{et al.} \cite{BuzFisSamVas:12} which reduce the study of maximal entropy measures of a so-called extension of an expansive system to the study of maximal entropy measures of expansive ones (see Section~\ref{sec:entropystatement}). 

This natural, more topological, approach to study ergodic  invariant measures arising from topological dynamics seems promising regarding further generalizations under less (or no) restrictions on the curvature or Jacobi fields. We conjecture that Theorem \ref{quotient}  (and eventually Theorem \ref{maximalentropy}) extends to surfaces without conjugate points where Green bundles are continuous (see \cite{kn:Eberlein} for the definition), in particular to  so-called surfaces with bounded asymptote \cite{kn:Eschenburg}. Such surfaces might admit focal points and  in many respects be quite far from surfaces with nonpositive curvature. 

The main idea to prove Theorem \ref{quotient} is in many ways a very natural one: the geodesic flow of a compact surface without conjugate points and of higher genus is not expansive in general. It may have regions of non-expansivity, that in the case of a surface without focal points consist precisely of flat strips. So we define an equivalence relation in the
unit tangent bundle of the surface that identifies points in the same strip (see Section~\ref{stripproperties}, where further properties of such strips are studied).  This relation induces naturally a quotient flow which preserves the
classes and the time parametrization of the initial geodesic flow. The quotient space then carries a flow without ``non-expansive" orbits,
and the hard part of the proof consists in showing that the quotient space has a good topological structure. We show that the quotient space is
a topological 3-manifold, and hence carries a smooth manifold structure (see Section~\ref{quotientstructure}). Finally, we show that the quotient flow is expansive with respect to any metric defined on the quotient space (see Section~\ref{sec:dynquoflo}).

\section{Preliminaries}\label{sec:2}

\noindent\emph{Standing assumption.}
Throughout the paper $(M,g)$ will be a $C^{\infty}$ compact connected Riemannian manifold without boundary.
We shall always assume that $M$ has \emph{no conjugate points}, that is, that  at every point the exponential map is non-singular. In particular, we will study the particular subclass of manifolds \emph{without focal points}, that is, if $J(t)$ is a Jacobi field along a geodesic in $M$ with $J(0)=0$ then $\lVert J(t)\rVert$ is strictly increasing in $t$.
\medskip

Each vector $\theta\in TM$ determines a unique geodesic $\gamma_\theta(\cdot)$ such that $\gamma_\theta'(0)=\theta$. The geodesic flow $(\phi_t)_{t\in\bR}$ acts on $TM$ by $\phi_t(\theta)=\gamma_\theta'(t)$. We shall study its restriction to the unit tangent bundle $T_1M$, which is invariant. All geodesics will be parametrized by arc length.

We shall denote by $\tilde{M}$ the universal covering of $M$ and endow it with the pullback $\tilde g$ of the metric $g$ by the covering map $\pi\colon\tilde{M} \longrightarrow M$ which gives the Riemannian manifold $(\tilde{M}, \tilde{g})$.
For this manifold we  consider  also geodesics and the geodesic flow which acts on $T_1\tilde M$ and  we will also denote them by $\gamma_{\bar\theta}(\cdot)$ for given vector $\bar\theta\in T_1\tilde M$ and $(\phi_t)_{t\in\bR}$ (the domain of the flow $\phi_{t}$ is enough to specify the dynamical system under consideration),   respectively.
We  denote by $\bar{\pi} \colon T_{1}\tilde{M} \longrightarrow T_{1}M$ the natural projection.
The distance associated to the Riemannian metric $g$ will be denoted by $d_{g}$ and the one associated to $\tilde{g}$ by $d_{\tilde{g}}$. We will omit the metric and simply write $d$ if there is no danger of confusion. The Riemannian metric on $M$ lifts to the Sasaki metric on $TM$ which we denote by $d_S$. We shall use the same notation for the Sasaki distances in $T_{1}\tilde{M}$ and $\widetilde{T_{1}M}$. For any $\theta\in TM$ we will consider the orthogonal decomposition of $T_\theta TM$ into horizontal and vertical parts $T_\theta TM=H_\theta\oplus V_\theta$, similar for $\bar\theta\in T\tilde M$.

Two geodesics $\gamma_1$ and $\gamma_2$ in  $\tilde{M}$ are \emph{asymptotic} (as $t\to\infty$) if $d(\gamma_1(t),\gamma_2(t))$ is bounded as $t\to\infty$, that is, there exists $C>0$ such that $ d(\gamma_1(t),\gamma_2(t))\le C$ for all $t\ge0$, and \emph{bi-asymptotic} if $d(\gamma_1(t),\gamma_2(t))$ is bounded as $t\to\pm\infty$, that is, the previous inequality holds for all $t\in\bR$.

Given a metric space $(X,d)$, and two subsets $Z_1,Z_2 \subset X$, let us denote by $d_H(Z_1,Z_2)$ the Hausdorff distance between $Z_1$ and $Z_2$.

\subsection{Horospheres and invariant submanifolds}

A very special property of manifolds with no conjugate points is the
existence of the so-called Busemann functions:
given \( \bar\theta = (p,v) \in T_{1}\tilde{M} \), the \emph{forward} and \emph{backward Busemann functions} \( b_{\bar\theta}^\pm\colon \tilde{M} \longrightarrow \mathbb R \)
associated to  \( \bar\theta\) are defined by
\[
	b_{\bar\theta}^+(x) \eqdef \lim_{t\rightarrow +\infty}d(x,\gamma_{\bar\theta}(t)) - t \\
	\quad\text{ and }\quad
	b_{\bar\theta}^-(x) \eqdef \lim_{t\rightarrow +\infty}d(x,\gamma_{\bar\theta}(-t)) - t ,
\]
respectively.
The level sets of the Busemann functions are the \emph{horospheres}.
We define the (level $0$) \emph{positive} and \emph{negative horosphere} of $\bar\theta \in  T_{1}\tilde{M}$ by
\[
	H^+(\bar\theta)\eqdef (b_{\bar\theta}^+)^{-1}(0)
	\quad\text{ and }\quad
	H^-(\bar\theta)\eqdef (b_{\bar\theta}^-)^{-1}(0),
\]	
respectively.

Horospheres in $\tilde M$ lift naturally to horospheres in $T_1\tilde M$ as follows.
Consider the gradient vector fields $\nabla b_{\bar\theta}^\pm$ and define the \emph{positive} and \emph{negative horocycles} $\tilde{\mathscr F}^{\s/\u}(\bar\theta)$ in $T_1\tilde M$ through $\bar\theta$ to be the restriction of $\nabla b_{\bar\theta}^\pm$ to $H^\pm(\bar\theta)$
\[
	\tilde{\mathscr F}^\s(\bar\theta)
	\eqdef \big\{(q,-\nabla_{q}b_{\bar\theta}^+)\colon q \in H^+(\bar\theta)\big\}
\,\text{ and }\,	\tilde{\mathscr F}^\u(\bar\theta)
	\eqdef \big\{(q,-\nabla_{q}b_{\bar\theta}^-)\colon q \in H^-(\bar\theta)\big\},
\]
respectively.
Let us denote by \( \sigma^{\bar\theta}_{t} \colon \tilde{M} \longrightarrow \tilde{M} \) the integral flow of the vector field \( -\nabla b_{\bar\theta}^+ \).
The orbits of this flow are the {\em Busemann asymptotes} of $\gamma_{\bar\theta}$.

We will list some basic properties of horospheres that we shall need (see, for example,~\cite{kn:Pesin,kn:Eschenburg} for details).

\begin{lemma} \label{Busemann}
Let $(M,g)$ be a $C^{\infty}$ compact connected boundaryless Riemannian manifold without conjugate points. Then
\begin{enumerate}
\item [(i)]
\( b_{\bar\theta}^\pm\) are \( C^{1} \) functions for every $\bar\theta$.
\item [(ii)]
There exists a constant $K>0$ depending on the curvature of $M$ such that for given $\theta\in T_1\tilde M$, the gradient vector field $\nabla b_{\bar\theta}^\pm$ is $K$-Lipschitz and has unit length.  Each horosphere is an embedded submanifold of dimension $n-1$ tangent to a Lipschitz plane field.
\item [(iii)]
The orbits of \( \sigma^{\bar\theta}_{t}\) are geodesics which are everywhere perpendicular to the horospheres $H^+(\bar\theta)$.
In particular, the geodesic \( \gamma_{\bar\theta} \) is an orbit of this
flow and for every $t \in \mathbb R$ we have
\[
	\sigma^{\bar\theta}_{t}( H^+(\bar\theta))= H^+(\gamma_{\bar\theta}(t)) \,.
\]
\end{enumerate}
\end{lemma}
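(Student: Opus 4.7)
The plan is to establish the three items one at a time, restricting attention to $b_{\bar\theta}^+$ since the backward case reduces to the forward case after reversing the geodesic. The starting point is the usual existence argument: for each $t>0$ set $b_t(x)\eqdef d(x,\gamma_{\bar\theta}(t))-t$, which is $1$-Lipschitz in $x$. Since $\tilde{M}$ is complete, simply connected and has no conjugate points, the geodesic $\gamma_{\bar\theta}$ is globally distance-minimizing, so $d(\gamma_{\bar\theta}(t),\gamma_{\bar\theta}(t+s))=s$, and the triangle inequality gives $b_{t+s}(x)\le b_t(x)$ and $b_t(x)\ge -d(x,\gamma_{\bar\theta}(0))$. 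Hence the limit $b_{\bar\theta}^+$ exists pointwise and is $1$-Lipschitz.

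The $C^1$ regularity in (i) is the most delicate step and is where I expect the main obstacle. The absence of conjugate points on the simply connected $\tilde{M}$ makes the exponential map based at $\gamma_{\bar\theta}(t)$ a diffeomorphism, so there is a unique unit-speed geodesic $\alpha_{x,t}$ from $x$ to $\gamma_{\bar\theta}(t)$, and $b_t$ is smooth on $\tilde{M}\setminus\{\gamma_{\bar\theta}(t)\}$ with unit gradient $\nabla_x b_t=-\dot\alpha_{x,t}(0)$. Compactness of the unit sphere at $x$ produces a cluster vector $v_x$; one verifies, using the $1$-Lipschitz control on $b_{\bar\theta}^+$ and that it decreases by one per unit of arclength along any limiting Busemann asymptote, that $v_x$ is unique and so the whole family $\dot\alpha_{x,t}(0)$ converges. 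An analogous argument applied to nearby points yields continuity of $x\mapsto v_x$ and differentiability of $b_{\bar\theta}^+$ with unit-length gradient $\nabla_x b_{\bar\theta}^+=-v_x$; this is carried out in detail in \cite{kn:Pesin,kn:Eschenburg}.

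The Lipschitz estimate in (ii) comes from Jacobi field comparison: a variation of $\gamma_{\bar\theta}$ through its Busemann asymptotes corresponds to a stable Jacobi field along $\gamma_{\bar\theta}$, whose growth is controlled by the sectional curvature bounds available on the compact quotient $M$. These bounds yield a uniform constant $K$ such that $\lVert\nabla_x b_{\bar\theta}^+-\nabla_y b_{\bar\theta}^+\rVert\le K\,d(x,y)$ for nearby $x,y$. Because $\nabla b_{\bar\theta}^+$ is continuous, unit length and nowhere zero, the implicit function theorem applied to the $C^1$ function $b_{\bar\theta}^+$ gives that each horosphere is an embedded $(n-1)$-dimensional submanifold, tangent to the Lipschitz plane field orthogonal to $\nabla b_{\bar\theta}^+$.

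For (iii), unit speed of the orbits of $\sigma^{\bar\theta}_t$ is immediate from $\lVert\nabla b_{\bar\theta}^+\rVert\equiv 1$. To see that each orbit is a geodesic one shows the Hessian of $b_{\bar\theta}^+$ vanishes in the direction of its own gradient; this passes to the $C^1$ limit from the analogous (and easy) property of the approximating functions $b_t$ along the radial geodesics $\alpha_{x,t}$. Perpendicularity to horospheres is the standard property of a gradient field on a level set. Finally, the identity $b_{\gamma_{\bar\theta}(t)}^+(x)=b_{\bar\theta}^+(x)-t$ follows directly from the definition via the change of parameter $s\mapsto s-t$; combined with the fact that $\sigma^{\bar\theta}_t$ is the unit-speed gradient flow of $b_{\bar\theta}^+$, this yields $\sigma^{\bar\theta}_t(H^+(\bar\theta))=H^+(\gamma_{\bar\theta}(t))$.
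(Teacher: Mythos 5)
The paper does not actually prove Lemma~\ref{Busemann}; the sentence preceding it refers the reader to \cite{kn:Pesin,kn:Eschenburg} for all details. Your sketch is the standard argument and ends up deferring the genuinely hard point to the same two sources, so it is the right proof at the right level of detail. Two remarks, one substantive and one a sign slip.

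The content of item (i) is the uniqueness of the cluster vector $v_x$, and your one-line justification (``using the $1$-Lipschitz control on $b_{\bar\theta}^+$ and that it decreases by one per unit of arclength along any limiting Busemann asymptote, that $v_x$ is unique'') makes it sound more automatic than it is. In a general length space one can have two rays from $x$ along both of which a $1$-Lipschitz function drops at rate one, so this alone does not force uniqueness. The proofs in \cite{kn:Pesin,kn:Eschenburg} argue through the Riccati equation and bounded stable Jacobi fields on the simply connected manifold without conjugate points; that is the machinery that actually produces a continuous unit vector field as the locally uniform limit of $-\nabla b_t$. Since you cite those references, this is acceptable in a sketch, but the phrasing should not suggest the step is elementary.

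In item (iii) you write $b_{\gamma_{\bar\theta}(t)}^+(x)=b_{\bar\theta}^+(x)-t$; with the paper's convention $b_{\bar\theta}^+(x)=\lim_{s\to\infty}d(x,\gamma_{\bar\theta}(s))-s$ the change of parameter $s\mapsto s+t$ gives $b_{\gamma_{\bar\theta}(t)}^+(x)=\lim_{s\to\infty}\bigl(d(x,\gamma_{\bar\theta}(s+t))-(s+t)\bigr)+t=b_{\bar\theta}^+(x)+t$, so the sign should be $+t$. Since the paper takes $\sigma_t^{\bar\theta}$ to be the flow of $-\nabla b_{\bar\theta}^+$, one has $b_{\bar\theta}^+\circ\sigma_t^{\bar\theta}=b_{\bar\theta}^+-t$, hence $\sigma_t^{\bar\theta}(H^+(\bar\theta))=(b_{\bar\theta}^+)^{-1}(-t)=H^+(\gamma_{\bar\theta}(t))$, which is the claim. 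Your stated pair of signs is internally inconsistent (together they produce $H^+(\gamma_{\bar\theta}(-t))$), but the intended argument is clearly correct once the sign is fixed.
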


 Lemma~\ref{Busemann} item (iii) implies that the horospheres are equidistant, that is, given any point 
 $p \in H^+(\gamma_{\bar\theta}(t))$  for every $s$ we have $d(p, H^+(\gamma_{\bar\theta}(s)))=\vert t-s \vert$. Clearly \( H^+(\gamma_{\bar\theta}(t))\) varies continuously with $t \in \mathbb R$, however it is not known whether it varies continuously with \( \bar\theta\).

\subsection{Visibility, central manifolds, and topological dynamics}
The universal covering of a compact manifold without conjugate points belongs to a special class of manifolds without conjugate points satisfying the axiom of {visibility}. This geometric property was introduced by Eberlein and O'Neill~\cite{kn:EO} for manifolds without focal points (or even without conjugate points~\cite{kn:Eberlein}) as a criterion when a manifold behaves as if it would have negative curvature.

\begin{definition} 
A complete simply connected Riemannian manifold $(M,g)$ is a \emph{visibility manifold} if it has no conjugate points and if for every $\varepsilon >0$, $p \in M$ there exists $T=T(\varepsilon,p)>0$ such that for every two unit speed geodesic rays $\gamma_1$, $\gamma_2$ with $\gamma_1(0)=p= \gamma_2(0)$, if the distance from $p$ to every point of the geodesic joining $\gamma_1(t)$ to $\gamma_2(s)$, $0 \leq s \leq t$, is larger than $T$ then the angle formed by $\gamma_1'(0)$ and $\gamma_2'(0)$ is less than $\varepsilon$. When $T$ does not depend on $p$ we say that $(M,g)$ is a \emph{uniform visibility manifold}.
\end{definition}

If $(M,g)$ is compact and $(\tilde{M}, \tilde{g})$ is a visibility manifold, then $(\tilde M,\tilde{g})$  is a uniform visibility manifold. Moreover, if $(M,h)$ is another Riemannian structure without conjugate points and such that geodesic rays diverge in $\tilde M$, then $(\tilde{M}, \tilde{h})$ is also a visibility manifold (see \cite[Section 5]{kn:Eberlein}). Since Green~\cite{kn:Green} proved that in the case of a surface without conjugate points geodesic rays diverge, in particular, we obtain the following conclusion.

\begin{proposition}
	Let $(M,g)$ be a $C^\infty$ compact connected boundaryless Riemannian surface  without conjugate points and of genus greater than one. Then $(\tilde M,\tilde g)$ is a visibility manifold.
\end{proposition}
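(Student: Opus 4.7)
The plan is to combine three classical results: the uniformization theorem providing a reference hyperbolic metric, Green's theorem on divergence of geodesic rays for surfaces without conjugate points, and Eberlein's comparison principle for visibility (already cited in the paragraph preceding the proposition).

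First I would invoke uniformization to equip $M$ with an auxiliary Riemannian metric $h$ of constant curvature $-1$; this is possible because $M$ is a closed connected smooth surface of genus greater than one. Then $(\tilde M,\tilde h)$ is isometric to the hyperbolic plane $\mathbb H^{2}$, which is a visibility manifold: any simply connected Riemannian manifold with sectional curvature bounded above by a negative constant satisfies the uniform visibility axiom, a standard fact due to Eberlein and O'Neill. Of course $h$ has no conjugate points.

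Next I would apply Green's theorem \cite{kn:Green} to the given metric $g$: on a compact surface without conjugate points, every pair of distinct geodesic rays in the universal cover emanating from a common point diverges, i.e.\ their distance at time $t$ tends to infinity with $t$. This is the hypothesis needed to feed into Eberlein's comparison result.

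Finally I would apply the comparison statement recalled just above the proposition (from \cite[Section 5]{kn:Eberlein}): since we have produced on $M$ a Riemannian structure $h$ without conjugate points for which $(\tilde M,\tilde h)$ is already a visibility manifold, and since $g$ is another Riemannian structure on $M$ without conjugate points for which geodesic rays in $\tilde M$ diverge (by Green), Eberlein's result yields that $(\tilde M,\tilde g)$ is itself a visibility manifold, as desired. The only slightly delicate point is making sure Eberlein's hypotheses are literally verified — no conjugate points and divergence of geodesic rays for $g$, plus the existence of \emph{some} visibility structure on the same topological surface — and this is exactly what uniformization together with Green's theorem provides. No further argument is needed; the proof is essentially a concatenation of known results tailored to surfaces of higher genus.
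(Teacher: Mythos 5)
Your proof is essentially identical to the paper's own argument: the paper's paragraph immediately preceding the proposition invokes uniformization (implicitly, via the genus hypothesis) to get a reference visibility structure, Green's theorem for divergence of geodesic rays, and Eberlein's comparison principle from \cite[Section 5]{kn:Eberlein}, which is precisely the chain of reasoning you lay out. No meaningful difference.
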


Given a point $\bar\theta \in T_{1}\tilde{M}$, its \emph{center stable set} and its \emph{center unstable set} are defined by
\[
	\tilde{\mathscr F}^{\cs}(\bar\theta)
	\eqdef
	\bigcup_{t \in {\mathbb R}} \tilde{\mathscr F}^\s\big(\phi_{t}(\bar\theta)\big)\\
	\quad\text{ and }\quad
	\tilde{\mathscr F}^{\cu}(\bar\theta)
	 \eqdef
	 \bigcup_{t \in {\mathbb R}} \tilde{\mathscr F}^\u\big(\phi_{t}(\bar\theta)\big)
	\,,
\]
respectively.
The images of $\tilde{\mathscr F}^{\s/\u}(\bar\theta)$  by the natural projection $\bar{\pi} \colon T_{1}\tilde{M} \longrightarrow T_{1}M$ are  the  \emph{stable} and the  \emph{unstable leaf} of $\theta=\bar{\pi}(\bar\theta)$ and denoted by $\mathscr F^{\s/\u}(\theta)$, respectively.
Likewise, the natural projections of $\tilde{\mathscr F}^{\cs/\cu}(\bar\theta)$ are the  \emph{center stable} and \emph{center unstable set} of $\theta$ and denoted by $\mathscr F^{\cs/\cu}(\theta)$, respectively.

We list now some important basic properties of the center stable and center unstable sets. For items (1)--(2) see~\cite[Section 6]{kn:Pesin},  the proof of item (4) is analogous to the proof of~\cite[Lemma 2.1]{BarRug:07}.
 
\begin{theorem} \label{central set}
	Let $(M,g)$ be a $C^{\infty}$ compact connected boundaryless Riemannian manifold without conjugate points such that $(\tilde{M}, \tilde{g})$ is a visibility manifold.
Then the following assertions hold:
\begin{enumerate}
\item [(i)] The family of sets
\[
	{\tilde {\mathscr  F}}^{\s}
	\eqdef \bigcup_{\bar\theta \in T_{1}\tilde {M}}{\tilde {\mathscr  F}}^{\s}(\bar\theta),\quad
	{\tilde {\mathscr  F}}^{\cs}
	\eqdef \bigcup_{\bar\theta \in T_{1}\tilde {M}}{\tilde {\mathscr  F}}^{\cs}(\bar\theta)
\]
are collections of $C^{0}$ submanifolds. In each union, these submanifolds  are either  disjoint
or coincide.

\item [(ii)] Given $p \in \tilde{M}$, there exists a homeomorphism
\[
	\Psi_{p}\colon \tilde{M} \times \tilde{V}_{p} \longrightarrow T_{1}\tilde{M}
\]	
such that
\[
	\Psi_{p}( \tilde{M}\times\{(p,v)\} ) = \tilde{\mathscr F}^{\cs}(p,v).
\]
In particular, ${\tilde {\mathscr  F}}^{\cs}$ and ${\mathscr  F}^{\cs}$ are continuous foliations and the space of leaves of ${\mathscr  F}^{\cs}$ is homeomorphic to the vertical
fiber $\tilde V_{p}$ for any $p \in \tilde{M}$.
\end{enumerate}
\end{theorem}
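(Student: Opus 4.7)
The approach is to represent every stable and center stable leaf as the graph of a Lipschitz section built from a Busemann gradient field, and then to parametrize the family of center stable leaves by ideal endpoints at infinity, which by the visibility axiom correspond bijectively to directions in $\tilde V_p$.

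For part (i), observe that along a $\phi_t$-orbit one has $b_{\phi_t(\bar\theta)}^+ = b_{\bar\theta}^+ - t$, so the whole family $\{b_{\phi_t(\bar\theta)}^+\}_{t\in\bR}$ shares a single gradient vector field. Combining this with Lemma~\ref{Busemann}, we obtain
\[
\tilde{\mathscr F}^{\cs}(\bar\theta) = \big\{(q, -\nabla_q b_{\bar\theta}^+) : q \in \tilde M\big\},
\]
which is the graph over $\tilde M$ of the unit, $K$-Lipschitz section $-\nabla b_{\bar\theta}^+$; in particular it is a $C^0$ submanifold homeomorphic to $\tilde M$. The leaf $\tilde{\mathscr F}^\s(\bar\theta)$ is the restriction of this graph to the $C^1$ embedded horosphere $H^+(\bar\theta)$. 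For the dichotomy, if $\tilde{\mathscr F}^{\cs}(\bar\theta_1)$ and $\tilde{\mathscr F}^{\cs}(\bar\theta_2)$ share a vector $(q,w)$, then $\nabla_q b_{\bar\theta_1}^+ = \nabla_q b_{\bar\theta_2}^+ = -w$; propagating this equality along the common Busemann asymptote through $q$ together with the uniqueness of geodesic rays in a given direction forces the two Busemann functions to differ by an additive constant and hence the two graphs to coincide. The stable analogue follows because $\tilde{\mathscr F}^\s(\bar\theta)$ corresponds to the $0$-level set of $b_{\bar\theta}^+$ inside the common center stable graph.

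For part (ii), define
\[
\Psi_p\bigl(q,(p,v)\bigr) \eqdef \bigl(q,\,-\nabla_q b_{(p,v)}^+\bigr),
\]
so that by construction $\Psi_p(\tilde M \times \{(p,v)\}) = \tilde{\mathscr F}^{\cs}(p,v)$. Bijectivity uses the visibility property twice: (a) the map $v \mapsto \gamma_{(p,v)}(\infty)$ is a homeomorphism from $\tilde V_p$ onto the ideal boundary $\partial_\infty \tilde M$, and (b) two unit vectors determine the same center stable leaf if and only if the corresponding geodesic rays share an endpoint at infinity. Together these give, for each $(q,w) \in T_1\tilde M$, a unique $v \in \tilde V_p$ with $\gamma_{(p,v)}(\infty) = \gamma_{(q,w)}(\infty)$, which is the unique preimage of $(q,w)$ under $\Psi_p$.

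The main technical obstacle is the continuity of $\Psi_p$ and $\Psi_p^{-1}$, which reduces to the joint continuity of $(\bar\theta,q) \mapsto \nabla_q b_{\bar\theta}^+$. In a uniform visibility manifold, if $\bar\theta_n \to \bar\theta$ then the rays $\gamma_{\bar\theta_n}$ converge uniformly on compact sets and their endpoints at infinity converge; standard estimates using the uniform visibility constant and equidistance of horospheres (Lemma~\ref{Busemann}(iii)) give $C^0_{\mathrm{loc}}$ convergence $b_{\bar\theta_n}^+ \to b_{\bar\theta}^+$, and the uniform $K$-Lipschitz bound on the gradients promotes this to $C^0_{\mathrm{loc}}$ convergence of $\nabla b_{\bar\theta_n}^+$ by an Arzelà--Ascoli argument together with uniqueness of the limit (another application of visibility). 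Continuity of $\Psi_p^{-1}$ follows from the continuity of the endpoint map $(q,w) \mapsto \gamma_{(q,w)}(\infty)$ together with (a) above. Once $\Psi_p$ is a homeomorphism, $\tilde{\mathscr F}^{\cs}$ is a continuous foliation with leaf space $\tilde V_p$, and passing to the $\Gamma$-quotient yields the corresponding statement for $\mathscr F^{\cs}$.
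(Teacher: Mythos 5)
Your approach — realizing each center stable set as the graph $\tilde{\mathscr F}^{\cs}(\bar\theta) = \{(q, -\nabla_q b_{\bar\theta}^+): q\in\tilde M\}$ of the Lipschitz Busemann gradient field, and parametrizing the leaf space by the vertical fiber via endpoints at infinity — is the standard route and matches the reference (Pesin, Section 6) that the paper cites; the paper itself gives no independent proof of this theorem. Two small corrections: the cocycle relation is $b_{\phi_t(\bar\theta)}^+ = b_{\bar\theta}^+ + t$, not $-t$ (the gradient is unaffected either way), and the dichotomy in part (i) does not follow from ``propagating along a common asymptote together with uniqueness of geodesic rays''; what it actually requires, and implicitly invokes, is the nontrivial fact that in a visibility manifold two asymptotic geodesics have Busemann functions differing by a constant. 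That fact is exactly what makes $\tilde{\mathscr F}^{\cs}$ an equivalence relation and must be cited or proved, not deduced from uniqueness of the exponential.

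The step you treat most lightly is the joint continuity of $(\bar\theta, q)\mapsto\nabla_q b_{\bar\theta}^+$, which is what actually makes $\Psi_p$ a homeomorphism. The paper flags this as the delicate point immediately before the theorem (``it is not known whether [$H^+(\cdot)$] varies continuously with $\bar\theta$'' absent a hypothesis such as visibility), and it is the reason the paper defers to a reference. Your plan — $C^0_{\mathrm{loc}}$ convergence of the Busemann functions, then equi-Lipschitz compactness (Arzel\`a--Ascoli) plus uniqueness of limits to promote to $C^0_{\mathrm{loc}}$ convergence of the gradients — is the right one, and the second half is sound. But the first half, $b_{\bar\theta_n}^+\to b_{\bar\theta}^+$ in $C^0_{\mathrm{loc}}$, is asserted only by appeal to ``standard estimates,'' while it is precisely here that the uniform visibility constant must enter quantitatively (to bound the angle at the far vertex of the triangle with vertices $x$, $\gamma_{\bar\theta_n}(t)$, $\gamma_{\bar\theta}(t)$ and obtain an estimate uniform in $t$, so that the limit defining $b^+$ can be interchanged with the limit in $n$). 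As written this is a plausible outline of the cited proof, not a self-contained argument.
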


The foliations $\tilde{\mathscr F}^{\s/\cs}$ and  $\tilde{\mathscr F}^{\u/\cu}$ are called the \emph{stable}, \emph{central stable}, \emph{unstable}, and \emph{center unstable foliations}, respectively.
The image of such foliations by the covering map
$\bar{\pi} \colon T_{1}\tilde{M} \longrightarrow T_{1}M$ form foliations ${\mathscr  F}^{\s}$, ${\mathscr  F}^{\cs}$, ${\mathscr  F}^{\u}$, ${\mathscr  F}^{\cu}$ that we call by the same names.

The geodesic flow of a compact manifold whose universal covering is a visibility manifold shares many important dynamical properties with Anosov geodesic flows. The following results are proved by Eberlein \cite{kn:Eberlein} (see \cite{kn:Eberlein2} for visibility manifolds with nonpositive curvature),
and extend to visibility universal coverings of compact manifolds without conjugate points applying Gromov hyperbolic theory \cite{kn:Gromov}.
In particular, item (3) can be viewed as a sort of coarse local product structure (in general there is no global product structure for the geodesic flow of the universal covering of compact surfaces without conjugate points and of higher genus).

Recall that a continuous flow $\psi_{t}\colon X  \longrightarrow X$ is \emph{topologically mixing} if for any two open sets $U,V\subset X$ there exists $T>0$ such that for $\lvert t\rvert\ge T$, $\psi_t(U)\cap V\ne\emptyset$.
Recall that the flow $\psi_t$ is \emph{topologically transitive} if  for any open sets $U_1$ and $U_2$ there is $t>0$ such that $\psi_t(U_1)\cap U_2\ne\emptyset$ or, equivalently, if there exists a dense orbit.
Clearly every topologically mixing flow is topologically transitive.
Recall that a foliation is \emph{minimal} if every of its leaves is dense.

\begin{theorem} \label{transitivity}
	Let $(M,g)$ be a compact Riemannian manifold without conjugate points such that $(\tilde{M}, \tilde{g})$ is a visibility manifold.
Then:
\begin{enumerate}
\item [(i)] The foliations ${\mathscr  F}^{\s}$, ${\mathscr  F}^{\u}$ are minimal.\\[-0.4cm]
\item [(ii)] The geodesic flow of $(M,g)$ is topologically mixing.
\item [(iii)] Every two points $(p,v),(q,w) \in T_{1}\tilde{M}$ with $(q,w) \neq (p,-v)$ are \emph{heteroclinically related},
that is, we have
$$ \tilde{\mathscr  F}^{\cs}(p,v) \cap \tilde{\mathscr  F}^{\cu}(q,w) \neq \emptyset,\quad
 \tilde{\mathscr  F}^{\cs}(q,w) \cap \tilde{\mathscr  F}^{\cu}(p,v) \neq \emptyset .$$
\end{enumerate}
\end{theorem}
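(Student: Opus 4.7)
The plan is to view all three parts through the dictionary between leaves and the ideal boundary $\tilde M(\infty)$ of the visibility manifold $(\tilde M,\tilde g)$: the leaf $\tilde{\mathscr F}^{\cs}(\bar\theta)$ depends only on the forward endpoint $\gamma_{\bar\theta}(+\infty)$ and $\tilde{\mathscr F}^{\cu}(\bar\theta)$ only on $\gamma_{\bar\theta}(-\infty)$, since asymptotic geodesics share endpoints at infinity. With this dictionary in hand, item~(iii) becomes essentially a restatement of the visibility axiom.

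First I would establish (iii). Given $(p,v)$ and $(q,w)$ with $(q,w)\ne(p,-v)$, set $\eta\eqdef\gamma_{(p,v)}(+\infty)$ and $\xi\eqdef\gamma_{(q,w)}(-\infty)$. If $\xi\ne\eta$, the visibility axiom produces a bi-infinite geodesic $\gamma$ with $\gamma(-\infty)=\xi$ and $\gamma(+\infty)=\eta$; any tangent vector to $\gamma$ then belongs to both $\tilde{\mathscr F}^{\cs}(p,v)$ (same forward endpoint) and $\tilde{\mathscr F}^{\cu}(q,w)$ (same backward endpoint), yielding the first intersection. Exchanging the roles of $(p,v)$ and $(q,w)$ produces the second. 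The hypothesis $(q,w)\ne(p,-v)$ is used to exclude the degenerate configuration in which $\xi=\eta$; in strict negative curvature this identification forces $(q,w)=(p,-v)$, and in the visibility setting without focal points one must verify that the coincidence of endpoints still yields at least one crossing geodesic, possibly by using flat strips as a limiting device.

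For item~(i), the plan is to reduce minimality of $\mathscr F^{\s}$ to the classical minimality of the action of the deck group $\pi_1(M)$ on $\tilde M(\infty)$ in the visibility/Gromov-hyperbolic setting \cite{kn:Eberlein,kn:Gromov}. A stable leaf in $T_1M$ lifts to a $\pi_1$-equivariant family of horocycles parametrized by the $\pi_1$-orbit of the endpoint $\gamma_{\bar\theta}(+\infty)$; density of this orbit, combined with the continuity of horocycles in the defining endpoint at a fixed basepoint, yields density of the projected leaf, and $\mathscr F^{\u}$ is symmetric. Item~(ii) then follows in two steps: topological transitivity is immediate from (i), and to upgrade to mixing I would combine (iii) with the density of closed geodesics (furnished by axes of hyperbolic deck transformations, abundant by visibility) together with the fact that on a compact higher-genus surface without focal points the length spectrum cannot be arithmetic; alternatively, one may invoke specification-type arguments available in the Gromov-hyperbolic setting.

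The main obstacle is the flat-strip analysis inside item~(iii). Without strict negativity the map from leaves to endpoints at infinity may fail to be injective, so the exceptional locus for the visibility argument is not the single orbit $\{(p,-v)\}$ but may include whole flat strips; producing an intersecting geodesic in this degenerate case, either by limiting non-degenerate pairs or by direct construction inside a strip, is where I expect the delicate work to concentrate.
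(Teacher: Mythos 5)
The paper does \emph{not} supply a proof of Theorem~\ref{transitivity}: it attributes all three items to Eberlein~\cite{kn:Eberlein,kn:Eberlein2} and states that they extend to visibility universal coverings of compact manifolds without conjugate points by Gromov's hyperbolic theory~\cite{kn:Gromov}. So there is no in-paper argument to compare against; what you have written is a reconstruction of the cited literature. Your overall route---passing to the ideal boundary $\tilde M(\infty)$, identifying $\tilde{\mathscr F}^{\cs}(\bar\theta)$ with the forward endpoint and $\tilde{\mathscr F}^{\cu}(\bar\theta)$ with the backward endpoint, invoking the visibility axiom to produce a connecting geodesic, and relating minimality to the density of the $\pi_1(M)$-action on $\tilde M(\infty)$---is precisely Eberlein's strategy and is the correct framework.

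That said, there are specific points where the sketch is off-target or relies on facts that are not available as stated. First, for item~(iii) your worry about flat strips is misplaced: the failure of injectivity of the endpoint map $\bar\theta\mapsto(\gamma_{\bar\theta}(-\infty),\gamma_{\bar\theta}(+\infty))$ is not an obstruction to existence, since visibility already gives \emph{at least one} connecting geodesic whenever the two boundary points are distinct; having a whole strip only adds more. The actual subtlety is the degenerate case $\gamma_{(p,v)}(+\infty)=\gamma_{(q,w)}(-\infty)$: there the required geodesic would have to join a boundary point to itself, which cannot happen in a visibility manifold, and the stated hypothesis $(q,w)\neq(p,-v)$ does \emph{not} exclude this situation (it only forces equality of the endpoints when $(q,w)=(p,-v)$, not conversely). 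So either the statement is to be read as tacitly excluding this degenerate configuration, or one must explain how the statement is used only for non-degenerate pairs, as indeed happens in the applications inside the paper. Second, for item~(i) your argument appeals to ``continuity of horocycles in the defining endpoint at a fixed basepoint.'' This is exactly the sort of regularity the paper explicitly flags as unresolved---after Lemma~\ref{Busemann} it is noted that it is not known whether $H^+(\gamma_{\bar\theta}(t))$ varies continuously with $\bar\theta$---so a proof of minimality cannot simply quote this. Eberlein's route is more careful and this is why the theorem is cited rather than re-derived. Third, for item~(ii) you invoke non-arithmeticity of the length spectrum ``on a compact higher-genus surface without focal points,'' but the theorem as stated is for manifolds of arbitrary dimension without conjugate points, so a surface-specific argument cannot serve; the mixing proof must be carried out in the visibility/Gromov-hyperbolic framework directly.
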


Finally recall that in the case that $M$ is a surface
 by a well-known result by Morse~\cite{kn:Morse} every minimizing geodesic in $(\tilde{M}, \tilde{g})$ is ``shadowed'' by a geodesic in the hyperbolic plane. Hence, if $(M,g)$ is a
$C^{\infty}$ compact  surface without focal points and of genus greater than one, then there exists $Q>0$ such that each minimizing geodesic in $(\tilde{M},\tilde{g})$ is contained in
the $Q$-tubular neighborhood of a certain hyperbolic geodesic. In particular, one can immediately conclude the following result.

\begin{proposition} \label{newMorseshadowing}
Let $(M,g)$ be compact Riemannian surface without conjugate points and of genus greater than one. Then there exists $Q=Q(M)>0$ such that the Hausdorff distance between any two bi-asymptotic geodesics in $\tilde M$ is bounded from above by $Q$.
\end{proposition}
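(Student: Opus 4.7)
The plan is to combine the classical Morse shadowing by hyperbolic geodesics with a rigidity property of $\mathbb{H}^2$: any two complete geodesics there that stay at bounded Hausdorff distance must coincide. Since $M$ has genus greater than one, uniformization endows $M$ with an auxiliary hyperbolic metric $h$; lift it to $\tilde h$ on $\tilde M$. Both $\tilde g$ and $\tilde h$ are invariant under the deck group acting cocompactly, hence they are bi-Lipschitz equivalent on $\tilde M$ with a uniform constant $L=L(M)\geq 1$. The assumption that $(\tilde M,\tilde g)$ has no conjugate points together with Green's divergence of geodesic rays on surfaces of higher genus implies, via the classical Morse argument, that every $\tilde g$-geodesic in $\tilde M$ is globally minimizing. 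Morse's shadowing theorem then produces a constant $Q_0=Q_0(M)>0$ such that each such $\tilde g$-geodesic is contained in the $Q_0$-tubular neighborhood, measured in $\tilde h$, of a unique complete $\tilde h$-geodesic; call it the \emph{shadow} of the $\tilde g$-geodesic.

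Now let $\gamma_1,\gamma_2$ be two $\tilde g$-bi-asymptotic geodesics, say $\sup_{t\in\bR}d_{\tilde g}(\gamma_1(t),\gamma_2(t))\leq C$, and let $\alpha_1,\alpha_2$ be their shadows. The triangle inequality together with the bi-Lipschitz comparison gives that the Hausdorff distance between $\alpha_1$ and $\alpha_2$, measured in $\tilde h$, is at most $LC+2Q_0$. But any two distinct complete geodesics in $\mathbb{H}^2$ diverge exponentially in at least one direction; hence $\alpha_1=\alpha_2=:\alpha$. Consequently each $\gamma_i$ lies within $\tilde h$-distance $Q_0$ of $\alpha$, hence within $\tilde g$-distance $LQ_0$ of $\alpha$, and therefore the Hausdorff $d_{\tilde g}$-distance between $\gamma_1$ and $\gamma_2$ is at most $2LQ_0$. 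Setting $Q:=2LQ_0$ gives a constant depending only on $M$.

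The only delicate point is the bookkeeping between $\tilde g$ and $\tilde h$: Morse's theorem is most cleanly stated in terms of hyperbolic tubes around hyperbolic geodesics, while the proposition refers to $d_{\tilde g}$. That both metrics are compact quotients under the same cocompact deck group reduces this to a uniform multiplicative adjustment, so no new difficulty arises. The two inputs — Morse shadowing, and the rigidity of bi-asymptotic pairs in the hyperbolic plane — are classical, so the main work is just assembling them carefully.
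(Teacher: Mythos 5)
Your proposal is correct and reconstructs precisely what the paper leaves implicit: Morse shadowing of $\tilde g$-geodesics by geodesics of an auxiliary hyperbolic metric $\tilde h$, followed by the rigidity fact that two distinct complete hyperbolic geodesics cannot remain at bounded Hausdorff distance, so the shadows of a bi-asymptotic pair coincide and the uniform tube constant transfers back through the bi-Lipschitz comparison. One cosmetic quibble: the global minimality of geodesics in $\tilde M$ follows already from the absence of conjugate points (the exponential map at any point of the simply connected $\tilde M$ is a diffeomorphism, so geodesic segments are unique), and Green's divergence of rays is not needed for that step.
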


\section{The geometry of strips}\label{stripproperties}

From now on we will always assume that $(M,g)$ is a $C^{\infty}$ compact connected Riemannian  surface without focal points and of genus greater than one.

\begin{definition}[Strip] Given $\bar\theta = (p,v) \in T_{1}\tilde{M}$, the \emph{strip} $F(\bar\theta) \subset \tilde{M}$
is the maximal subset of geodesics $\gamma_{\bar\eta} \subset \tilde{M}$ such that $\gamma_{\bar\eta}$ is an orbit of both the vector fields $-\nabla b_{\bar\theta}^+$ and $-\nabla b_{\bar\theta}^-$.
If the strip $F(\bar\theta)$ consists of a single geodesic, then the point $\bar\theta$ is called \emph{expansive} and the strip is called \emph{trivial}. Otherwise, we call the strip \emph{nontrivial}.
\end{definition}

In this section we survey geometric properties of strips. 
The starting point is the following Flat Strip Theorem (see 
\cite[Proposition 5.1]{kn:EO}, \cite[Theorem 7.3]{kn:Pesin}, note that it fails for manifolds without conjugate points \cite{Bur:92}).

\begin{theorem} \label{flatstrip}
	Any two bi-asymptotic geodesics in $(\tilde{M}, \tilde{g})$ are contained in a \emph{flat strip}, that is,  they are contained in an isometrically and totally geodesically embedded copy of an Euclidean strip $[a,b]\times\bR$ into the universal covering $\tilde M$.
\end{theorem}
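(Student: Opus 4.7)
The plan is to take a smooth family of minimizing geodesic segments $c_t\colon[0,1]\to\tilde M$ joining $\gamma_1(t)$ to $\gamma_2(t)$, show the length function $L(t)\eqdef d(\gamma_1(t),\gamma_2(t))$ is bounded and convex on $\bR$ hence constant, and then extract the flat strip from the resulting vanishing of the second variation integrand.

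First I would fix unit-speed parametrizations of $\gamma_1,\gamma_2$; using Lemma~\ref{Busemann} and bi-asymptoticity one can arrange that $\gamma_1(t)$ and $\gamma_2(t)$ lie on a common level set of $b^+_{\gamma_1'(0)}$. Since $\tilde M$ is simply connected and has no conjugate points, the exponential map at any point is a diffeomorphism, so there is a unique minimizing geodesic $c_t$ from $\gamma_1(t)$ to $\gamma_2(t)$, depending smoothly on $t$; by bi-asymptoticity $L$ is bounded on $\bR$. The variation field $J_t(s)\eqdef\partial_t c_t(s)$ is a Jacobi field along each $c_t$ with boundary values $\gamma_1'(t)$ and $\gamma_2'(t)$.

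The central step is convexity of $L$. Applying the second variation of arc length to the family $\{c_t\}$, and using that the endpoint curves $\gamma_1,\gamma_2$ are themselves geodesics so that all endpoint boundary terms involving normal acceleration vanish, one arrives at a formula of the shape
\[
L''(t) \;=\; \int_0^1 \Bigl(\|(J_t^\perp)'(s)\|^2 - K(c_t(s))\,\|J_t^\perp(s)\|^2\Bigr)\,ds,
\]
where $J_t^\perp$ is the component of $J_t$ orthogonal to $\dot c_t$ and $K$ is the Gaussian curvature. The no focal points hypothesis is precisely the analytic statement that $\|J^\perp\|^2$ is convex along every geodesic for any perpendicular Jacobi field, which says exactly that the integrand is pointwise nonnegative. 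Hence $L''\ge 0$ on $\bR$, and a convex bounded function on $\bR$ is constant: $L\equiv L_0$.

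With $L''\equiv 0$ the nonnegative integrand must vanish identically along every $c_t$, forcing $K(c_t(s))\equiv 0$ and $J_t^\perp$ parallel along $c_t$. The map $\Phi\colon[0,L_0]\times\bR\to\tilde M$ defined by $\Phi(s,t)\eqdef c_t(s/L_0)$ then extends to an isometric, totally geodesic embedding of the Euclidean strip into $\tilde M$ whose boundary lines are (reparametrizations of) $\gamma_1,\gamma_2$; injectivity follows from the uniqueness of minimizing connections in $\tilde M$. The step I expect to be most delicate is carrying out the second variation computation rigorously, since the endpoints of $c_t$ move along $\gamma_1,\gamma_2$ rather than being fixed and all endpoint boundary contributions must be carefully identified and shown to vanish. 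A cleaner alternative is to first prove convexity of $t\mapsto d(\gamma_1(t),\gamma_2)$ via the Jacobi equation along the family of perpendicular segments from $\gamma_1(t)$ to $\gamma_2$, deduce that this distance is constant, and reconstruct the flat strip from these perpendiculars.
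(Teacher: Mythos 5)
The paper itself does not prove this statement: it is cited from the literature (Eberlein--O'Neill for nonpositive curvature and Pesin for the no focal points case), so the comparison is with those classical proofs. Your overall strategy -- convexity of the distance between the two geodesics via second variation of arc length, boundedness forcing constancy, and then extracting flatness from the vanishing of the index-form integrand -- is exactly the standard route, and the use of the no focal points hypothesis as the analytic statement that $\|J^\perp\|^2$ is convex along any geodesic is the right ingredient.

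The gap is in the final step. You write that $L''\equiv 0$ and hence the vanishing of the integrand $\|(J^\perp_t)'\|^2 - K\|J^\perp_t\|^2$ ``forces $K\equiv 0$ and $J^\perp_t$ parallel.'' That conclusion is immediate in nonpositive curvature, where $-K\geq 0$ makes the integrand a sum of two nonnegative terms, each of which must vanish. But without focal points $K$ may be positive, the integrand is a genuine difference, and its vanishing only tells you that $\|J^\perp_t\|^2$ is an affine function of $s$ along each $c_t$ (since $(\|J^\perp\|^2)''=2(\|J'\|^2-K\|J\|^2)$). This is perfectly compatible with $K>0$: for instance $\|J^\perp\|^2=1+as$ with $a\neq 0$ yields $K=a^2/(4(1+as)^2)>0$. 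To close the gap you must bring in the boundary information. Once $L$ is constant, the first variation forces the connecting segments to meet $\gamma_1$ and $\gamma_2$ orthogonally; then $J_t=J_t^\perp$ along $c_t$, with $\|J_t(0)\|=1$, and, crucially, $\partial_s\|J_t\|^2(0)=2\langle J_t(0),\nabla_t\dot c_t(0)\rangle=0$ because in dimension two the unit normal field $\dot c_t(0)$ along the geodesic $\gamma_1$ is parallel. An affine function with a vanishing derivative at an endpoint is constant, so $\|J_t^\perp\|^2\equiv 1$, hence $J_t^\perp$ is parallel, and the Jacobi equation $J''=-KJ$ with $J\neq 0$ finally yields $K\equiv 0$. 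This endpoint argument is precisely what your main setup (joining $\gamma_1(t)$ to $\gamma_2(t)$ rather than dropping perpendiculars onto $\gamma_2$) makes awkward, and it is why the ``cleaner alternative'' you mention at the end is in fact the route you should have taken; as written, the main argument does not establish flatness.
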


\begin{definition}
	By Theorem \ref{flatstrip}, any strip $F(\bar\theta)$ is an isometric embedding $f\colon{\mathbb R}\times [a,b] \longrightarrow \tilde{M}$  into
the universal covering. If $[a,b]$ is chosen to be maximal with this property we call $b-a$ the \emph{width of the strip} $F(\bar\theta)$.
\end{definition}

An immediate consequence of Proposition \ref{newMorseshadowing} is the following.

\begin{lemma} \label{width}
	The width of any strip is bounded from above by $Q$, where $Q$ is the constant in Proposition \ref{newMorseshadowing}.
\end{lemma}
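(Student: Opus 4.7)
The plan is to exhibit the two boundary geodesics of a maximal flat strip as a concrete bi-asymptotic pair to which Proposition~\ref{newMorseshadowing} applies, and then to identify their Hausdorff distance in $\tilde M$ with the Euclidean width of the strip.

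First, I would fix $\bar\theta\in T_1\tilde M$ and, by Theorem~\ref{flatstrip} together with the maximality of $[a,b]$ in the definition of width, write $F(\bar\theta)$ as the image of a totally geodesic isometric embedding $f\colon\bR\times[a,b]\to\tilde M$. Consider the two boundary geodesics $\gamma_a(t)\eqdef f(t,a)$ and $\gamma_b(t)\eqdef f(t,b)$. Both lie in $F(\bar\theta)$, so both are orbits of $-\nabla b_{\bar\theta}^+$ and of $-\nabla b_{\bar\theta}^-$; equivalently, they are simultaneously forward and backward asymptotic to $\gamma_{\bar\theta}$, and hence bi-asymptotic to each other. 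Proposition~\ref{newMorseshadowing} then yields $d_H(\gamma_a,\gamma_b)\le Q$.

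Next, I would compute $d_H(\gamma_a,\gamma_b)=b-a$. Since $\tilde M$ is simply connected without conjugate points, a Cartan--Hadamard-type argument (the exponential map at any point is a covering map without critical points, hence a diffeomorphism) shows that any two points of $\tilde M$ are joined by a unique globally minimizing geodesic. Combined with the totally geodesic property of $f$, the straight segments of the Euclidean strip map to genuine minimizing geodesics of $\tilde M$, so
\[
d\bigl(f(t_1,s_1),f(t_2,s_2)\bigr)=\sqrt{(t_1-t_2)^2+(s_1-s_2)^2}
\]
for all admissible parameters. Taking $s_1=a$, $s_2=b$ and minimizing over $t_2$ gives $d(f(t_1,a),\gamma_b)=b-a$ uniformly in $t_1$, and symmetrically with $\gamma_a$ and $\gamma_b$ interchanged. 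Hence $d_H(\gamma_a,\gamma_b)=b-a$, and combining with the previous step gives $b-a\le Q$.

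The only delicate point is the equality of Euclidean and ambient Riemannian distances inside the strip: one must rule out a shortcut in $\tilde M$ connecting two points of the image of $f$ that is strictly shorter than the corresponding straight segment in the flat strip. This is precisely where global minimality of geodesics in the simply connected conjugate-point-free $\tilde M$ is essential; once it is in hand, the bound on the width reduces to a one-line Euclidean computation combined with the shadowing bound $Q$.
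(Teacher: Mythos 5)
Your proof is correct and fills in exactly what the paper compresses into ``an immediate consequence of Proposition~\ref{newMorseshadowing}.'' The key step you correctly isolate is that the Riemannian isometric embedding $f$ of the flat strip is also \emph{distance}-preserving as a map from $\bR\times[a,b]\subset\bR^2$ into $(\tilde M, d_{\tilde g})$: because $f$ is totally geodesic, Euclidean straight segments in the strip map to geodesics of $\tilde M$, and because $\tilde M$ is complete, simply connected, and without conjugate points, every geodesic of $\tilde M$ is globally minimizing, so there is no shortcut through the ambient space. This gives $d(f(t_1,a),\gamma_b)=b-a$ for all $t_1$ (and symmetrically), hence $d_H(\gamma_a,\gamma_b)=b-a$, and the bound $b-a\le Q$ follows from Proposition~\ref{newMorseshadowing} applied to the two bi-asymptotic boundary geodesics. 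One small remark on phrasing: the fact that $\exp_p\colon T_p\tilde M\to\tilde M$ is a diffeomorphism is deduced from it being a nonsingular local diffeomorphism from a complete space, hence a covering map, onto a simply connected target; your parenthetical reverses the logical order slightly, but the conclusion is the standard Cartan--Hadamard-type statement you intend and the argument is sound.
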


We gather in the next lemma the main geometric properties of nontrivial strips (see~\cite{kn:Pesin}).

\begin{lemma} \label{semicontinuity}
\begin{enumerate}
\item [(i)] The intersection of $F(\bar{\theta})$ with the horosphere $H^{+}(\bar{\theta})$ coincides with
$$
	I(\bar{\theta}) 
	:= H^{+}(\bar{\theta})\cap H^{-}(\bar{\theta}).
$$
\item [(ii)] For every $\bar{\theta} \in \tilde{M}$, the set $I(\bar{\theta})$ is a flat convex set, namely, if $p, q \in I(\bar{\theta})$
then the geodesic segment joining $p$ to $q$ also belongs to $I(\bar{\theta})$.
\end{enumerate}
\end{lemma}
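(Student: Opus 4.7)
The proof rests on the inequality $b^+_{\bar\theta}+b^-_{\bar\theta}\ge 0$ on $\tilde M$, obtained from the triangle inequality $d(x,\gamma_{\bar\theta}(t))+d(x,\gamma_{\bar\theta}(-t))\ge 2t$ in the limit. Since $I(\bar\theta)$ sits inside the zero set of this $C^1$ sum (Lemma~\ref{Busemann}(i)), every $q\in I(\bar\theta)$ is a global minimum of $b^+_{\bar\theta}+b^-_{\bar\theta}$, yielding
\[
\nabla_q b^+_{\bar\theta}=-\nabla_q b^-_{\bar\theta}\qquad\text{for every } q\in I(\bar\theta).
\]
This identity is the bridge between the analytic and dynamical sides of the argument.

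\textbf{Proof of item (i).}
For the inclusion $I(\bar\theta)\subset F(\bar\theta)\cap H^+(\bar\theta)$, fix $q\in I(\bar\theta)$ and set $v\eqdef -\nabla_q b^+_{\bar\theta}=\nabla_q b^-_{\bar\theta}$. By Lemma~\ref{Busemann}(iii), $\gamma_{(q,v)}$ is an orbit of the Busemann flow $\sigma^{\bar\theta}_t$ and hence forward asymptotic to $\gamma_{\bar\theta}$. The analog for $b^-_{\bar\theta}$, combined with the key identity, shows that $\gamma_{(q,-v)}$ (with initial velocity $-v=-\nabla_q b^-_{\bar\theta}$) is an orbit of the flow of $-\nabla b^-_{\bar\theta}$ and thus forward asymptotic to the reversed curve $t\mapsto\gamma_{\bar\theta}(-t)$. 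Reversing parameters, $\gamma_{(q,v)}$ is backward asymptotic to $\gamma_{\bar\theta}$ and hence bi-asymptotic to it; as an unparametrized curve its image is simultaneously an orbit of both gradient fields, so $q\in F(\bar\theta)$. Conversely, any $q\in F(\bar\theta)\cap H^+(\bar\theta)$ lies on a geodesic $\gamma_{\bar\eta}$ bi-asymptotic to $\gamma_{\bar\theta}$, and the Flat Strip Theorem~\ref{flatstrip} places both inside an isometric totally geodesic Euclidean strip $S\subset\tilde M$. Since $\tilde M$ is simply connected and without focal points, geodesics there are unique minimizers, so $\tilde M$-distances within $S$ equal the Euclidean ones; a direct computation of the Busemann limits in coordinates $S=[a,b]\times\bR$ with $\gamma_{\bar\theta}(t)=(c_0,t)$ then gives $b^\pm_{\bar\theta}(x,y)=\mp y$ on $S$. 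In particular $b^+_{\bar\theta}+b^-_{\bar\theta}\equiv 0$ on $S$, so $b^+_{\bar\theta}(q)=0$ forces $b^-_{\bar\theta}(q)=0$, i.e.\ $q\in I(\bar\theta)$.

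\textbf{Proof of item (ii).}
Taking the maximal strip $F(\bar\theta)=[a,b]\times\bR$ in the above Euclidean coordinates, part (i) identifies $I(\bar\theta)=F(\bar\theta)\cap H^+(\bar\theta)=[a,b]\times\{0\}$ as a Euclidean line segment perpendicular to $\gamma_{\bar\theta}$. For any $p_1,p_2\in I(\bar\theta)$, the unique $\tilde M$-geodesic joining them coincides with the Euclidean straight segment in $F(\bar\theta)$ and hence remains in $[a,b]\times\{0\}=I(\bar\theta)$, proving convexity; flatness is immediate from the isometric Euclidean embedding of $F(\bar\theta)$.

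\textbf{Main obstacle.}
The delicate step is the bridge, in the first inclusion of (i), from the pointwise analytic identity $\nabla b^+_{\bar\theta}=-\nabla b^-_{\bar\theta}$ at $q$ to the global geometric conclusion that $\gamma_{(q,v)}$ is bi-asymptotic to $\gamma_{\bar\theta}$. The two Busemann gradients point in opposite directions along any bi-asymptotic geodesic, which forces the careful distinction between oriented orbits of the two individual gradient flows and the underlying unparametrized geodesic image that actually sits in the strip. A secondary subtlety is the Euclidean Busemann computation inside the flat strip: it depends on $\tilde M$-distances between points of the strip being realized by geodesics within it, which in turn relies on uniqueness of geodesics in the simply connected focal-point-free surface $\tilde M$.
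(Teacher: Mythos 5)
The paper itself gives no proof of this lemma, deferring to Pesin (\cite{kn:Pesin}); your write-up therefore supplies an argument where none is spelled out, and it is correct. The strategy via the globally nonnegative sum $b^+_{\bar\theta}+b^-_{\bar\theta}$ and the resulting first-order condition $\nabla_q b^+_{\bar\theta}=-\nabla_q b^-_{\bar\theta}$ on $I(\bar\theta)$ is the standard one and everything checks out, including the Euclidean Busemann computation inside the flat strip once one uses that the strip is totally geodesically and isometrically embedded in a simply connected manifold without conjugate points, so that ambient and intrinsic Euclidean distances agree.

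Two small remarks. First, the ``main obstacle'' you identify is not really where the delicacy lies: for the inclusion $I(\bar\theta)\subset F(\bar\theta)$ you never need bi-asymptoticity. Once $\nabla_q b^+_{\bar\theta}=-\nabla_q b^-_{\bar\theta}$ is known, the geodesic through $q$ with initial velocity $-\nabla_q b^+_{\bar\theta}$ is, by Lemma~\ref{Busemann}(iii) applied to $b^+_{\bar\theta}$ and to $b^-_{\bar\theta}=b^+_{-\bar\theta}$, an orbit of $-\nabla b^+_{\bar\theta}$ and (with reversed orientation) of $-\nabla b^-_{\bar\theta}$; this is precisely the membership condition in the definition of $F(\bar\theta)$, so $q\in F(\bar\theta)$ directly. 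Second, the converse inclusion does implicitly use that a geodesic which is an orbit of both Busemann gradient fields is actually bi-asymptotic to $\gamma_{\bar\theta}$ (needed to invoke Theorem~\ref{flatstrip}). This is not automatic from the definition of Busemann asymptote in a general manifold without conjugate points; here it follows from the no-focal-points hypothesis (distances along horospherical orbits are non-increasing), or alternatively from visibility, and is worth stating since it is the same assumption under which Definition~3.3 (that $F(\bar\theta)$ is a single flat strip, which you also rely on in item (ii)) is justified.
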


The following lemma follows from~\cite[Lemma 2.1]{Bur:83}.

\begin{lemma} \label{lem:dense}
	If the orbit of a point in $T_{1}M$ by the geodesic flow is dense, then none of its lifts to $T_{1}\tilde{M}$ has a nontrivial strip.
\end{lemma}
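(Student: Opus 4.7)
My plan is to argue by contradiction using a semicontinuity-and-density argument combined with Gauss--Bonnet.

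Suppose some lift $\bar\theta$ of $\theta$ carries a nontrivial strip, and set $w:=\mathrm{width}(F(\bar\theta))>0$. Define $W\colon T_{1}M\to[0,Q]$ by $W(\xi):=\mathrm{width}(F(\bar\xi))$ for any lift $\bar\xi$ of $\xi$. This is well-defined on $T_{1}M$ because deck transformations of $(\tilde M,\tilde g)$ are isometries and hence send strips to strips of the same width, with the uniform bound $Q$ coming from Lemma~\ref{width}. Moreover $W$ is flow-invariant, since $F(\phi_{t}(\bar\xi))$ equals $F(\bar\xi)$ as a family of geodesics, merely reparametrized.

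The core step is to show that $W$ is upper semi-continuous. Given $\xi_{n}\to\xi$ in $T_{1}M$ with $W(\xi_{n})\geq w'$, lift locally so that $\bar\xi_{n}\to\bar\xi$ in $T_{1}\tilde M$ and select the two bounding geodesics of each $F(\bar\xi_{n})$; by Lemma~\ref{width} their base vectors lie in a compact region, so after passing to a subsequence they converge to vectors $\bar\zeta, \bar\zeta'$. Since bi-asymptotic geodesics in $\tilde M$ stay at Hausdorff distance $\leq Q$ (Proposition~\ref{newMorseshadowing}), this bound passes to the limit, so $\gamma_{\bar\zeta}$ and $\gamma_{\bar\zeta'}$ are bi-asymptotic to $\gamma_{\bar\xi}$; by the Flat Strip Theorem~\ref{flatstrip} they lie in $F(\bar\xi)$ with transverse separation at least $w'$, yielding $W(\xi)\geq w'$.

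Combining flow-invariance, upper semi-continuity and the density of the orbit of $\theta$ forces $W(\xi)\geq w>0$ for every $\xi\in T_{1}M$. So through every $(p,v)\in T_{1}M$ passes a flat strip of width $\geq w$, which projects via the local isometry $\pi$ to a flat two-dimensional piece of $M$ adjoining $p$; continuity of the Gauss curvature then forces $K(p)=0$ at every $p\in M$. This contradicts Gauss--Bonnet, since $0=\int_{M}K\,dA=2\pi\chi(M)=2\pi(2-2g)<0$ for genus $g\geq 2$. The hard part of the argument is the upper-semicontinuity step: one must ensure that the limiting bounding vectors remain genuine companions of $\bar\xi$ inside its strip rather than flow-translates of $\bar\xi$ that have drifted along $\gamma_{\bar\xi}$ and lost their transverse displacement. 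This is controlled by the uniform width bound $Q$ and by the characterization of bi-asymptoticity as bounded Hausdorff distance between unit-speed geodesics, a closed condition under limits.
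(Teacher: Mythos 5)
Your proof is correct, but it takes a genuinely different route from the paper, which simply cites Burns~\cite[Lemma 2.1]{Bur:83} and offers no argument of its own. Your approach is self-contained: you introduce the strip-width function $W\colon T_1M\to[0,Q]$, observe that it is well defined (deck transformations are isometries), flow-invariant (the strip is the same set of geodesics under flow translation, only reparametrized), and upper semi-continuous, and then conclude via density and Gauss--Bonnet. The flow-invariance and well-definedness checks are correct, and the Gauss--Bonnet endgame is clean: $W\ge w>0$ everywhere forces every base point of $M$ to lie in the projection of a flat strip, hence $K\equiv 0$, contradicting $\int_M K\,dA=2\pi\chi(M)<0$.

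The delicate step is the upper semi-continuity of $W$, which you correctly identify as the heart of the matter. Your sketch is on the right track, but to make it airtight one should be explicit about two points. First, when extracting the bounding geodesics of $F(\bar\xi_n)$ one must fix a normalization (e.g.\ take as footpoints the orthogonal projections of $\gamma_{\bar\xi_n}(0)$ onto the two boundary geodesics, which are within distance $Q$ of $\gamma_{\bar\xi_n}(0)$); then the chosen tangent vectors indeed lie in a compact set. Second, for the separation: in a flat strip, every point of one boundary geodesic sits at distance exactly $W(\xi_n)$ from the other boundary geodesic (the strip being totally geodesic, that perpendicular is a minimizing geodesic in $\tilde M$), and since the closest points on $\gamma_{\bar\zeta'_n}$ remain in a compact region (controlled by $Q$), this distance passes to the limit, giving $d(\gamma_{\bar\zeta},\gamma_{\bar\zeta'})\ge w'$. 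Combined with the fact that bounded Hausdorff distance between geodesics is a closed condition under local-uniform convergence, the limit geodesics $\gamma_{\bar\zeta},\gamma_{\bar\zeta'}$ are bi-asymptotic to $\gamma_{\bar\xi}$ and, by Theorem~\ref{flatstrip}, lie in $F(\bar\xi)$ with separation $\ge w'$, yielding $W(\xi)\ge w'$. This fills in exactly the gap you flagged. As a comparison, the paper's citation is shorter, but your argument produces a reusable structural fact (upper semi-continuity of the strip-width) and makes no appeal to an external lemma whose statement the reader must look up; note also that the lemma following this one in the paper (``any nontrivial strip is periodic'') would give a one-line alternative proof, so the paper's choice to cite Burns here rather than wait for that result is essentially a matter of logical ordering.
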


In fact, we now have the following much stronger result (in the case of surfaces with non-positive curvature and genus greater than one due to Coudene and Schapira~\cite[Theorem 3.2]{CouSch:14} and in the general case to Schr\"oder~\cite{Sch:}, where the latter extends to an even much more general context).

\begin{lemma}
	Any nontrivial strip is periodic.
\end{lemma}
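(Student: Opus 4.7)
The goal is to produce, for any nontrivial strip $F$, a non-trivial element of the stabilizer $\mathrm{Stab}(F) \eqdef \{g \in \Gamma : g F = F\}$ that acts on the Euclidean slab $F\cong [0,w]\times\bR$ by a non-zero translation in the $\bR$-factor; this forces every geodesic in $F$ to project to a closed geodesic in $M$. By Theorem~\ref{flatstrip} and Lemma~\ref{width}, $F$ has width $w\in(0,Q]$, and all geodesics in $F$ share a common pair of endpoints $\{\xi^-,\xi^+\}$ in the ideal boundary $\partial_\infty\tilde M$ (well-defined under the visibility property of $\tilde M$). Hence $\mathrm{Stab}(F)=\{g\in\Gamma : g\{\xi^-,\xi^+\}=\{\xi^-,\xi^+\}\}$. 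Since $\Gamma$ is torsion-free, any non-trivial element of this stabilizer has a power acting as a genuine translation in the $\bR$-factor of $F$, so it suffices to exhibit any non-trivial element.

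To do so, I would use compactness of $T_1M$. The forward orbit of the boundary vector $\bar\theta_0\eqdef\gamma_0'(0)$ must recur, giving sequences $t_n\to\infty$ and $g_n\in\Gamma$ with $g_n\phi_{t_n}(\bar\theta_0)\to\bar v_\infty$ in $T_1\tilde M$. The vector $g_n\phi_{t_n}(\bar\theta_0)$ lies on the boundary of the nontrivial strip $g_nF$ of width $w$; by continuous dependence of geodesics on initial conditions together with the Lipschitz gradients $\nabla b^\pm$ (Lemma~\ref{Busemann}), the strips $g_nF$ converge on bounded sets to a nontrivial strip $F_\infty$ of width in $[w,Q]$, and the endpoint pairs satisfy $g_n\{\xi^-,\xi^+\}\to\{\zeta^-,\zeta^+\}$ in $\partial_\infty\tilde M\times\partial_\infty\tilde M$, where $\{\zeta^-,\zeta^+\}$ is the endpoint pair of $F_\infty$. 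If I can arrange $g_n\{\xi^-,\xi^+\}=g_m\{\xi^-,\xi^+\}$ for some $n\ne m$, then $h\eqdef g_m^{-1}g_n\in\mathrm{Stab}(F)$; a direct computation using that $h$ commutes with $\phi_t$ together with the relation $g_n\phi_{t_n}(\bar\theta_0)\approx g_m\phi_{t_m}(\bar\theta_0)$ shows that $h$ acts on $F$ as a translation of length $\approx t_n-t_m\ne 0$, hence non-trivially.

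The main obstacle is precisely to upgrade the accumulation $g_n\{\xi^-,\xi^+\}\to\{\zeta^-,\zeta^+\}$ to a subsequential exact coincidence: a discrete orbit in a Hausdorff space may accumulate without repeating. The resolution relies on the scarcity of nontrivial strips. By Lemma~\ref{lem:dense} a dense orbit has only trivial strips, and by topological mixing (Theorem~\ref{transitivity}(ii)) there is such a dense orbit, so vectors with only trivial strips contain an open dense set; consequently the set of endpoint pairs of nontrivial strips is meager in $\partial_\infty\tilde M\times\partial_\infty\tilde M$. Combining this scarcity with the uniform lower bound $w>0$ on the widths of the approximating strips $g_nF$ and with the discreteness of the $\Gamma$-action, one forces the required repetition. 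The careful implementation is the content of the arguments of Coud\`ene--Schapira~\cite{CouSch:14} in the nonpositive-curvature case and of Schr\"oder~\cite{Sch:} in greater generality.
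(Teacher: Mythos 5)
The paper does not actually prove this lemma; it only cites Coud\`ene--Schapira~\cite{CouSch:14} (nonpositive curvature) and Schr\"oder~\cite{Sch:} (the present generality). Your proposal attempts to sketch a proof, which is more than the paper does, but it does not close the gap either, and the one genuinely new step you add is flawed.

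Your reduction is reasonable: find a nontrivial element of $\mathrm{Stab}(F)$, use torsion-freeness of $\Gamma$ to get a translation along $F$, and try to produce such an element from recurrence of the boundary orbit of the strip. And you are right to flag the obstacle: $g_n\{\xi^-,\xi^+\}\to\{\zeta^-,\zeta^+\}$ does not give $g_n\{\xi^-,\xi^+\}=g_m\{\xi^-,\xi^+\}$. But the paragraph you offer to resolve it is not correct. You claim that ``vectors with only trivial strips contain an open dense set,'' and from this that the endpoint pairs of nontrivial strips form a meager set, and that this ``forces the required repetition.'' None of these steps hold as stated. Density of the expansive points follows from Lemma~\ref{lem:dense} plus the existence of a dense orbit, but \emph{openness} does not: a vector outside every strip can easily be approximated by vectors lying in nontrivial strips, and the only route the paper offers to openness is Corollary~\ref{newexpgeneric}, whose proof \emph{uses} the periodicity of strips you are trying to establish. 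So this part of your argument is circular. Moreover, even if one granted that the relevant endpoint set were nowhere dense, a sequence $g_n\{\xi^-,\xi^+\}$ of pairwise distinct points can converge within a nowhere dense set; scarcity alone does not force the required exact coincidence, and the uniform width bound plus discreteness of $\Gamma$ are not by themselves enough to conclude. You then acknowledge this and defer to \cite{CouSch:14,Sch:}, which is precisely what the paper does. So the net content of your proposal is the same citation, wrapped in a setup whose decisive step is not justified and is in fact circular.
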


As a consequence, we immediately obtain the following.

\begin{corollary} \label{newexpgeneric}
	For any $\theta\in T_1M$ and any of its lifts $\bar\theta\in T_1\tilde M$, the set $\tilde{\mathscr F}^{\cs}(\bar\theta)$ contains at most one nontrivial strip. In particular, the set of nontrivial strips is countable and hence the set of expansive points is open and dense in $T_1\tilde M$.
\end{corollary}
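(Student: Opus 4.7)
The plan is to prove the three assertions in the corollary in sequence: uniqueness of a nontrivial strip per center stable leaf, the consequent countability of all nontrivial strips, and density (with openness) of the expansive set in $T_1\tilde M$.

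For uniqueness, I would take $\bar\eta_1,\bar\eta_2$ both in a common center stable leaf with nontrivial strips. Since $\bar\eta_2\in\tilde{\mathscr F}^{\s}(\phi_{t_0}\bar\eta_1)$ for some $t_0$, the geodesics $\gamma_{\bar\eta_i}$ are forward asymptotic and share a common point $\xi$ on the boundary at infinity of $\tilde M$ (the latter being Gromov hyperbolic, since $\pi_1(M)$ is a surface group acting geometrically on $\tilde M$). By the preceding lemma both strips are periodic, so there exist nontrivial deck transformations $\tau_i\in\pi_1(M)$ preserving $\gamma_{\bar\eta_i}$, each fixing $\xi$. Because $\pi_1(M)$ is a torsion-free hyperbolic group, the stabilizer of $\xi$ is infinite cyclic, so $\tau_1,\tau_2$ admit a common power $\tau$ that preserves both axes. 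As an axial isometry, $\tau$ has precisely two fixed points at infinity, forcing $\gamma_{\bar\eta_1}$ and $\gamma_{\bar\eta_2}$ to share both endpoints and hence be bi-asymptotic. Theorem~\ref{flatstrip} places them in a common flat strip, and maximality gives $F(\bar\eta_1)=F(\bar\eta_2)$.

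For countability and density, by the preceding lemma every nontrivial strip is periodic and therefore projects to a compact flat cylinder in $T_1M$ labeled by a primitive conjugacy class of $\pi_1(M)$. Since $\pi_1(M)$ is countable there are countably many such cylinders, each admitting countably many lifts in $T_1\tilde M$. Each lift is a closed two-dimensional subset of the three-manifold $T_1\tilde M$, hence nowhere dense, and the Baire category theorem applied to their countable union yields density of the expansive set.

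The main obstacle will be openness. I would attempt to show that the non-expansive set is sequentially closed: given $\bar\theta_n\to\bar\theta$ with each $\bar\theta_n$ non-expansive, Lemma~\ref{width} and Proposition~\ref{newMorseshadowing} produce a bi-asymptote $\bar\eta_n\in F(\bar\theta_n)$ of uniformly bounded Sasaki distance from $\bar\theta_n$, and after passing to a subsequence $\bar\eta_n\to\bar\eta_\infty$ with $\gamma_{\bar\eta_\infty}$ bi-asymptotic to $\gamma_{\bar\theta}$. The hard point is excluding the degenerate case $\bar\eta_\infty\in\phi_{\bR}\bar\theta$, which would occur exactly when the widths $w(F(\bar\theta_n))$ collapse to $0$. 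I would try to rule this out by exploiting the proper discontinuity of $\pi_1(M)$ on $T_1\tilde M$: accumulation of distinct periodic strips with shrinking widths would pack infinitely many disjoint flat cylinders into a bounded region of $T_1M$, contradicting properness of the covering action.
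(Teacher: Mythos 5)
Your argument for the first assertion (at most one nontrivial strip per center stable leaf) is sound, and it is a legitimate way to make precise what the paper treats as an immediate consequence of the periodicity lemma. The chain — common positive endpoint $\xi$ at infinity, torsion-free hyperbolic group, cyclic stabilizer of $\xi$, common power $\tau$, two fixed points at infinity forcing bi-asymptoticity, Theorem~\ref{flatstrip}, maximality — is correct, though it invokes more machinery than strictly needed. The countability (via conjugacy classes) and density (via Baire applied to countably many nowhere dense closed sets) are also fine.

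The openness argument, however, has a genuine gap. You correctly isolate the degenerate case where the widths $w(F(\bar\theta_n))\to 0$, but the attempted exclusion does not work. Proper discontinuity of the $\pi_1(M)$-action on $T_1\tilde M$ says that for every compact $K\subset T_1\tilde M$ only finitely many deck transformations move $K$ to meet itself; it makes no statement that precludes infinitely many pairwise disjoint compact flat cylinders from sitting inside the compact space $T_1M$. Indeed, since $T_1M$ is compact, \emph{everything} lies in a ``bounded region,'' and countably many disjoint compact sets (with areas summing to a finite number) can perfectly well accumulate somewhere. So ``packing infinitely many disjoint flat cylinders into a bounded region of $T_1M$'' is not a contradiction and the shrinking-width case is not ruled out by this reasoning. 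To get openness one should instead show that the non-expansive set coincides with (or at least is closed because it equals) the closed set $\cH$ where the stable and unstable Green subbundles $E^{\s},E^{\u}$ agree; this reduces openness to the rigidity statement that on a no-focal-points surface a perpendicular parallel Jacobi field along a geodesic in $\tilde M$ integrates to an actual flat strip — a nontrivial fact in the spirit of the Flat Strip Theorem, and not a consequence of proper discontinuity. The paper itself gives no proof, calling the corollary immediate from the periodicity lemma, so the openness claim deserves care in any case.
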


\section{Quotient space  and the model flow}
\label{quotientstructure}

We now define as a key object in the present paper a relation in $T_{1}M$, and it is straightforward to check that it is indeed an equivalence relation.  It ``collapses'' each strip into a single curve. 

\begin{definition}\label{def:quotientflow}
Two points $\theta,\eta \in T_{1}M$ are related $\theta\sim\eta$ if, and only if,
\begin{itemize}
\item $\eta \in \mathscr F^\s(\theta)$\\[-0.3cm]
\item if $\bar\theta$ is any lift of $\theta$ and $\bar{\eta}$ a lift of $\eta$ satisfying $\bar{\eta} \in \tilde{\mathscr F}^\s(\bar{\theta})$,
then the geodesics $\gamma_{\bar{\theta}}$ and $\gamma_{\bar{\eta}}$ are bi-asymptotic.
\end{itemize}
Given $\theta\in T_1M$, we denote by $[\theta]$ the equivalence class which contains $\theta$. We denote by 
\[
	X\eqdef T_1M/_\sim
\]
 the set of all equivalence classes. Consider the quotient map
\[
	\chi \colon T_{1}M \longrightarrow X\colon \theta\mapsto [\theta].
\]		
\end{definition}

Recall that the \emph{quotient topology} associated to a quotient map $\chi$
 is the topology generated by the sets $U \subset X$ such that $\chi^{-1}(U)$ is an open set of $T_{1}M$.

We consider the flow $\psi_{t}\colon X \longrightarrow X$ defined by
\[
	\psi_{t}([\theta]) \eqdef [\phi_{t}(\theta)]\,.
\]
 As the geodesic flow preserves the foliation ${\mathscr  F}^\s$  and asymptoticity, this flow is indeed well defined. 
Moreover,  it is continuous in the quotient topology and  we have the following key fact which immediately follows from the very definition of the flows. 
  
\begin{proposition}\label{prop:semiconjugation}
	The geodesic flow $\phi_t\colon T_1M\to T_1M$ and the quotient flow $\psi_t\colon X\to X$ are time-preserving semi-conjugate by means of the quotient map  $\chi$.
\end{proposition}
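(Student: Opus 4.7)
The plan is to verify directly the three ingredients in the definition of a time-preserving semi-conjugacy: that $\chi$ is a continuous surjection, that $\psi_t$ is well-defined on $X$ (yielding a continuous flow), and that $\chi\circ\phi_t=\psi_t\circ\chi$ with identity reparametrization.

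First I would check that the equivalence relation $\sim$ is invariant under $\phi_t$, so that the formula $\psi_t([\theta])\eqdef[\phi_t(\theta)]$ is unambiguous. Assume $\theta\sim\eta$. Since $\phi_t$ permutes the leaves of ${\mathscr F}^\s$, we immediately obtain $\phi_t(\eta)\in{\mathscr F}^\s(\phi_t(\theta))$. For the bi-asymptoticity condition, fix lifts $\bar\theta,\bar\eta$ with $\bar\eta\in\tilde{\mathscr F}^\s(\bar\theta)$ whose geodesics $\gamma_{\bar\theta},\gamma_{\bar\eta}$ are bi-asymptotic. Then $\phi_t(\bar\theta)$ and $\phi_t(\bar\eta)$ are lifts of $\phi_t(\theta)$ and $\phi_t(\eta)$ still related by the stable leaf, and the geodesics $\gamma_{\phi_t(\bar\theta)}(s)=\gamma_{\bar\theta}(s+t)$ and $\gamma_{\phi_t(\bar\eta)}(s)=\gamma_{\bar\eta}(s+t)$ remain bi-asymptotic since a time-shift does not affect the boundedness of $d(\gamma_1(s),\gamma_2(s))$ in $s\in\bR$. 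Hence $\phi_t(\theta)\sim\phi_t(\eta)$, and $\psi_t$ is well defined.

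Next, $\chi$ is surjective by construction and continuous by the very definition of the quotient topology. The identity $\chi\circ\phi_t=\psi_t\circ\chi$ then amounts to $[\phi_t(\theta)]=\psi_t([\theta])$, which holds directly from the definition of $\psi_t$. Taking the reparametrization $\rho_p(s)=s$ for every $p\in T_1M$ gives a time-preserving semi-conjugacy.

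Finally, to confirm that $\psi_t$ is a continuous flow on $X$, I would invoke the universal property of the quotient topology: for each fixed $t$, the composition $\chi\circ\phi_t\colon T_1M\to X$ is continuous, so the induced map $\psi_t\colon X\to X$ is continuous; joint continuity in $(t,[\theta])$ then follows from the joint continuity of the geodesic flow combined with continuity of $\chi$. There is no genuine obstacle: the substantive content (that strips are preserved by $\phi_t$) is already encoded in the $\phi_t$-invariance of the stable foliation and in the definition of $\sim$, and the rest is routine topological bookkeeping. The deeper structural properties of $X$ (being a topological $3$-manifold, the expansivity of $\psi_t$, etc.) are not needed here and are precisely what the subsequent sections will establish.
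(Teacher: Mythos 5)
Your argument is correct and follows the paper's own reasoning: the paper states the proposition is immediate once one notes that $\phi_t$ preserves $\mathscr{F}^\s$ and bi-asymptoticity (so $\sim$ is $\phi_t$-invariant and $\psi_t$ is well defined), that $\chi$ is continuous and surjective by the quotient construction, and that $\chi\circ\phi_t=\psi_t\circ\chi$ holds tautologically. You have simply written out those same observations in full, with no deviation in strategy.
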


The equivalence relation on $T_{1}M$ with quotient map $\chi$ induces naturally an equivalence relation in
$T_{1}\tilde{M}$ with quotient map $\bar{\chi} \colon T_{1}\tilde{M} \longrightarrow \bar{X}$.
Let us denote by $[\bar\theta]$ the corresponding equivalence class of $\bar\theta\in T_1\tilde M$. Let $\bar{\psi}_{t} \colon\bar{X} \longrightarrow \bar{X}$ be the corresponding quotient flow.%
\footnote{Notice that the absence of focal points was not needed in the above construction and that it extends to surfaces without conjugate points.}

 In the present section we will study the topological properties of the quotient space and this flow.
The main result of the section is the following.

\begin{theorem} \label{Hausdorff}
	Let $(M,g)$ be a compact surface without focal points. Then the quotient space $X$ is a compact topological 3-manifold. In particular, $X$ admits a smooth 3-dimensional structure where the quotient flow $\psi_t$ is continuous.
\end{theorem}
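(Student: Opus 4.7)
The plan is to show that $X$ is compact, Hausdorff, and locally Euclidean of dimension $3$; the existence of a smooth structure on which $\psi_t$ remains continuous then follows from Moise's theorem on smoothing topological $3$-manifolds together with the fact that continuity is purely topological. Compactness is immediate since $X = \chi(T_1M)$ is the continuous image of a compact space, so the real work is in the Hausdorff property and the local model.

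For the Hausdorff property, I would prove that the graph $R = \{(\theta,\eta) \in T_1M \times T_1M : \theta \sim \eta\}$ is closed in $T_1M \times T_1M$, which, because $T_1M$ is compact Hausdorff, is equivalent to $X$ being Hausdorff. Given $(\theta_n, \eta_n) \in R$ with $(\theta_n, \eta_n) \to (\theta, \eta)$, lift to $T_1\tilde M$, choosing $\bar\theta_n \to \bar\theta$ and $\bar\eta_n \to \bar\eta$ with $\bar\eta_n \in \tilde{\mathscr F}^\s(\bar\theta_n)$ and $\gamma_{\bar\theta_n}$, $\gamma_{\bar\eta_n}$ bi-asymptotic. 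By the Flat Strip Theorem and Lemma \ref{width}, these geodesics lie in a common flat strip, hence $d(\gamma_{\bar\theta_n}(t), \gamma_{\bar\eta_n}(t)) \le Q$ for every $t \in \bR$. Passing to the limit by continuity of the geodesic flow gives the same bound for $\gamma_{\bar\theta}$ and $\gamma_{\bar\eta}$, so they are bi-asymptotic and, by Theorem \ref{flatstrip}, lie in a common flat strip. Moreover $\gamma_{\bar\eta}$ is a Busemann asymptote of $\gamma_{\bar\theta}$ (asymptotic geodesics are orbits of $-\nabla b^+_{\bar\theta}$), so $\bar\eta \in \tilde{\mathscr F}^\s(\bar\theta)$, yielding $\theta \sim \eta$.

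For local Euclidean structure near a fixed $[\theta_0] \in X$, lift to $\bar\theta_0 \in T_1\tilde M$ and use Theorem \ref{central set}(ii) to produce a local chart around $\bar\theta_0$ of the form $U_s \times U_t \times U_u$, where $(s,t)$ parametrize the $2$-dimensional center stable leaf through $\bar\theta_0$ ($s$ along the horocycle, $t$ along the flow) and $u$ is a $1$-dimensional transversal to the center stable foliation. Because $\sim$ preserves each center stable leaf and acts within stable leaves, it acts in these coordinates only on the $s$-variable: for fixed $(t_0, u_0)$, the equivalence class of $(s_0, t_0, u_0)$ is a closed sub-arc $[s_-(t_0,u_0), s_+(t_0,u_0)] \times \{(t_0, u_0)\}$ corresponding to the slice of the strip through that point, of length at most $Q$ by Lemma \ref{width}. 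The closedness of $R$ makes this decomposition upper semi-continuous in $(t,u)$, and the classical fact that the quotient of $\bR$ by an upper semi-continuous decomposition into compact arcs (and points) is again homeomorphic to $\bR$ then provides, leaf by leaf, a quotient of the $s$-coordinate that is itself homeomorphic to $\bR$, producing the desired chart of $X$ with $\bR^3$. The main technical obstacle is precisely to assemble these leafwise quotients into a single continuous chart across the transverse parameters $(t,u)$; this rests on the uniform width bound (Lemma \ref{width}), the continuity of the center stable foliation (Theorem \ref{central set}(ii)), and the closedness of $R$ proved in the previous step.
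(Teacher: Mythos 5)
Your argument divides into three parts; the first two are essentially sound, the third has a genuine gap.

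\emph{Compactness and Hausdorff.} Showing that the graph $R=\{(\theta,\eta)\colon\theta\sim\eta\}$ is closed in $T_1M\times T_1M$ is a clean and correct route to the Hausdorff property, one the paper leaves implicit (it follows from the saturated basis built in Lemma~\ref{basis}); this is a useful explicit complement. One cleanup: the lift $\bar\eta_n\in\tilde{\mathscr F}^\s(\bar\theta_n)$ of $\eta_n$ is not canonically determined, so you should instead observe that the width bound of Lemma~\ref{width} keeps $\bar\eta_n$ within a bounded Sasaki distance of $\bar\theta_n$, extract a convergent subsequence $\bar\eta_{n_k}\to\bar\eta'$ (some lift of $\eta$), and conclude $\bar\eta'\in\tilde{\mathscr F}^\s(\bar\theta)$ from the continuity and closedness of the stable foliation (Theorem~\ref{central set}); the parenthetical ``asymptotic geodesics are orbits of $-\nabla b^+_{\bar\theta}$'' is true in the no-focal-points setting but deserves its own justification if you rely on it.

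\emph{The gap is the local Euclidean step.} You reduce to a leafwise quotient and then write that ``the main technical obstacle is precisely to assemble these leafwise quotients into a single continuous chart across the transverse parameters $(t,u)$,'' naming the width bound, foliation continuity, and closedness of $R$ as the ingredients. That assembly is exactly where the proof lives, and it is not done. Those three ingredients yield only that the decomposition of the local box into arcs $\{(t,u)\}\times A(t,u)$ is upper semi-continuous; writing $A(t,u)=[f(t,u),g(t,u)]$, the endpoint $f$ is merely lower semi-continuous and $g$ merely upper semi-continuous, so the natural collapse $(s,t,u)\mapsto\bigl(s-\mathrm{Leb}(A(t,u)\cap(-\infty,s]),t,u\bigr)$ is not continuous, and the one-fiber quotient homeomorphism does not glue on its own across $(t,u)$. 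In dimension $3$ this is a real danger, not a formality: Bing's dogbone space is an upper semi-continuous decomposition of $\bR^3$ into points and arcs whose quotient is not $\bR^3$. Your situation is more restrictive (all arcs lie in parallel $1$-dimensional fibers and have length at most $Q$), and one could likely salvage the approach with genuine decomposition-space machinery (a Bing-type shrinking argument), but that is a substantially heavier route than the paper's and is not supplied.

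\emph{What the paper does instead.} Proposition~\ref{productstructure} avoids the gluing issue entirely. It picks a dense expansive orbit $\bar\theta^\ast$ near $\bar\theta$ (Theorem~\ref{transitivity} and Lemma~\ref{lem:dense}), parametrizes the traces of its stable and center-unstable leaves in a saturated section $U$ by arc-length maps $R^\s$, $R^\u$, and defines the chart directly by the heteroclinic bracket $h(r,s)=\bar\chi\bigl([R^\u(r),R^\s(s)]\bigr)$, where $[\cdot,\cdot]$ is the intersection of a stable and an unstable plaque inside $U$. Every equivalence class in $U$, trivial or not, is exactly one such bracket, so $h$ is a continuous bijection, and Brouwer invariance of domain gives openness. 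The bracket is built from the local product structure of Theorem~\ref{transitivity}(iii) rather than from an implicit fiberwise collapse, and this is precisely what sidesteps the semicontinuity problem your assembly runs into. If you wish to keep your route, you must invoke and verify a shrinking criterion for upper semi-continuous decompositions of $\bR^3$ into arcs lying in parallel lines; otherwise the bracket construction is cleaner and uses only dynamics already in the paper.
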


Theorem \ref{Hausdorff} is not at all obvious and requires a careful analysis of the quotient topology and its relationship with the dynamics of the geodesic flow.
The absence of focal points will be crucial in some subtle steps of the proof. The main idea of its proof is
to exhibit a special basis for the quotient topology, whose construction will be made in several steps.
The proof will be concluded at the end of this section.

\subsection{A family of cross sections and a basis for the quotient topology} \label{subs1}

As first step to obtain a basis for the quotient topology we construct a special family of cross sections
for the quotient flow from which we shall obtain a basis by shifting them by the geodesic flow.

Given $\bar{\theta} \in T_{1}\tilde{M}$, let
$$
	{\mathcal I}(\bar{\theta})
	\eqdef \tilde{\mathscr F^\s}(\bar{\theta}) \cap \tilde{\mathscr F^\u}(\bar{\theta}).
$$
Note that the set $\mathcal I(\bar\theta)$ is a lift of $I(\bar{\theta})$ (defined in Lemma~\ref{semicontinuity}) to $T_{1}\tilde{M}$. Moreover, the set  $\tilde{\mathscr F}^{\cs}(\bar{\theta}) \cap \tilde{\mathscr F}^{\cu}(\bar{\theta})$ contains an isometric copy of the  strip $F(\bar\theta)$ (which can be trivial or nontrivial).

\begin{figure}
\begin{overpic}[scale=.48 
  ]{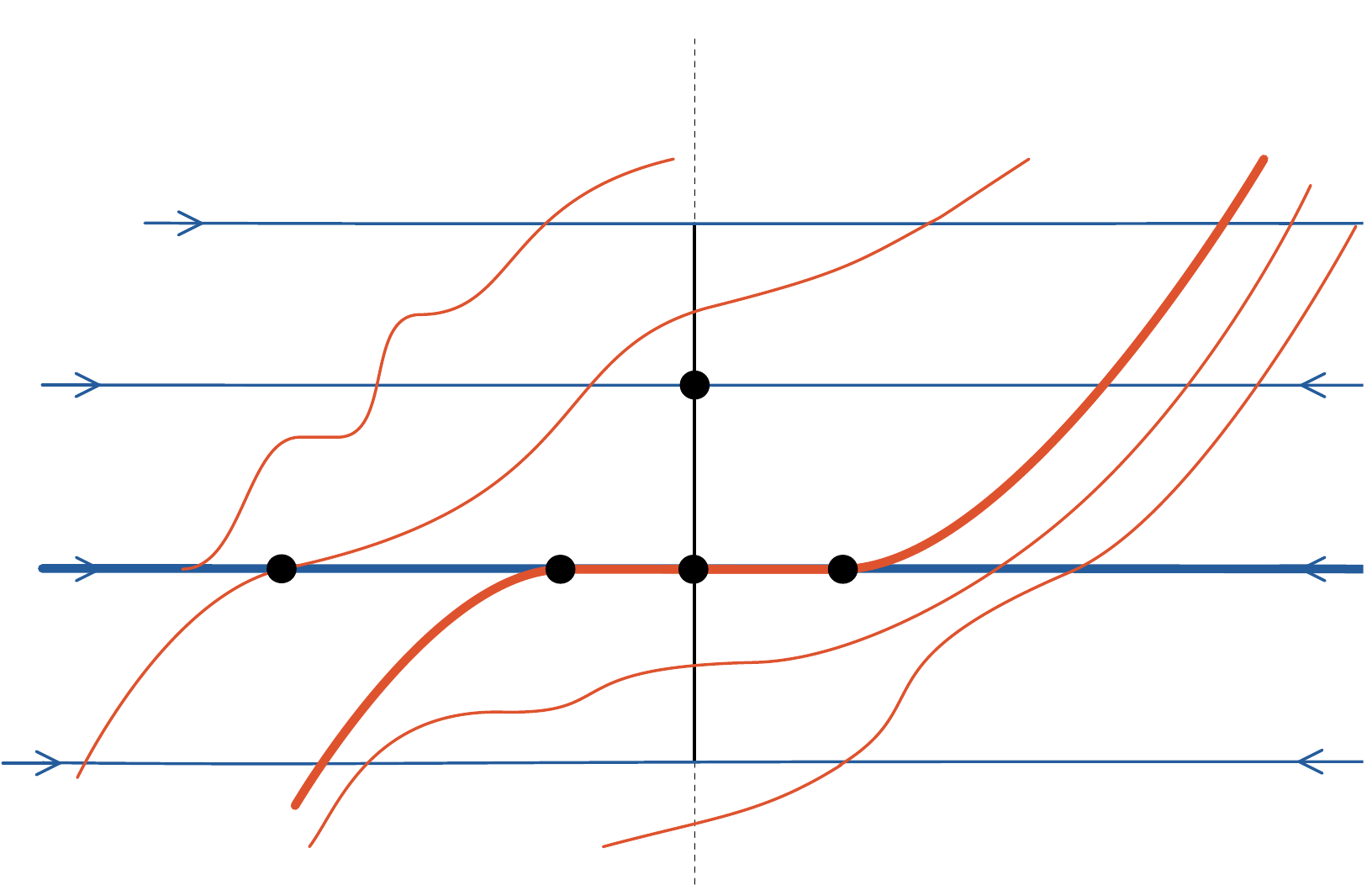}
      \put(35,26){\tiny$R(r_0^-,0)$}
      \put(57,26){\tiny$R(r_0^+,0)$}
      \put(52,39){\tiny$R(0,s)$}
      \put(17,26){\tiny$R(r,0)$}
      \put(-9,22){\tiny$\tilde{\mathscr F^\s}(\bar{\theta})$}
      \put(-18,36){\tiny$\tilde{\mathscr F^\s}(R(0,s))$}
      \put(86,55){\tiny$\tilde{\mathscr F^{\cu}}(\bar{\theta})\cap \Sigma$}
      \put(48,19){\tiny$\bar{\theta}=R(0,0)$}
      \put(51,50){\tiny$V_\delta(\bar\theta)$}
\end{overpic}
\caption{Parametrization of the disk $\Sigma=\Sigma_{\bar\theta}(\varepsilon,\delta)$}
\label{fig:1}
\end{figure}

Let us now choose the local cross section. Given a point $\bar{\theta}$, let us construct a smoothly embedded closed two-dimensional disc $\Sigma=\Sigma_{\bar\theta}(\varepsilon,\delta)\subset T_{1}\tilde{M}$ which is transverse to the geodesic flow and which contains $\mathcal I(\bar\theta)$. This disk will be foliated by the leaves of $\tilde{\mathscr F}^\s$.
To begin the construction, let $\varepsilon >0$, $\delta >0$ be sufficiently small, let $V_{\delta}(\bar{\theta})$ be the $\delta$-tubular neighborhood of
$\bar{\theta}$ in its vertical fiber and let
\[
	R\colon
	(r_0^--\varepsilon, r_0^++\varepsilon)\times (-\delta, \delta) \longrightarrow 
	T_1\tilde M
\]
be the homeomorphism with the following properties:
\begin{itemize}
\item $R(0,0) = \bar{\theta}$,
\item $R(0,s)$ with $s \in (-\delta, \delta)$ is the arc length parametrization of $V_{\delta}(\bar{\theta})$ in the Sasaki metric.
\item $R(r,0)$ is the arc length parametrization of the $\varepsilon$-tubular neighborhood of ${\mathcal I}(R(0,0)) ={\mathcal I}(\bar{\theta})$ in  $\tilde{\mathscr F}^\s(\bar{\theta})$,
$R(r_0^-,0)$ and $R(r_0^+,0)$ being the endpoints of ${\mathcal I}(\bar\theta)$;
\item For each $s \in (-\delta, \delta)$, $r\mapsto R(r,s)$ is the arc length parametrization of the continuous curve $R(\cdot,s)$ in $\tilde{\mathscr F}^\s(R(0,s))$.
\end{itemize}
Since the foliation $\tilde{\mathscr F}^\s$ is a continuous foliation by Lipschitz curves,
by Brower's Open Mapping Theorem the image of $R$ is a two-dimensional section that we will denote by $\Sigma_{\bar{\theta}}(\varepsilon,\delta)$,
\[
	\Sigma_{\bar\theta}(\varepsilon,\delta)
	:= R\big((r_0^--\varepsilon, r_0^++\varepsilon)\times (-\delta, \delta)\big)
\]
(compare Figure~\ref{fig:1}).
Given $\tau>0$, let us consider the open neighborhood of $\bar{\theta}$ in $T_{1}\tilde{M}$ of the form
\begin{equation}\label{def:Bball}
	B_{\bar{\theta}}(\varepsilon, \delta, \tau)
	\eqdef \big\{ \phi_{t}\big(\Sigma_{\bar{\theta}}(\varepsilon,\delta)\big)
				\colon \lvert t \rvert <\tau \big\}
\end{equation}
where $\phi_{t}$ is the geodesic flow of $T_{1}\tilde{M}$.
We shall also omit the point $\bar{\theta}$ in the index unless we change it.
Clearly, the section depends on the point $\bar\theta$ and on the parameters $\varepsilon$ and $\delta$,
we shall omit this dependence in the notation but keep it in mind.
Let
\[
	\Pi_{\Sigma} \colon
	B(\varepsilon, \delta, \tau)	\to \Sigma
\]	
be the projection from $B(\varepsilon,\delta,\tau):=B_{\bar\theta}(\varepsilon,\delta,\tau)$ onto $\Sigma=\Sigma_{\bar\theta}(\varepsilon,\delta)$ by $\phi_t$.

We have the following result (compare Figure~\ref{fig:2}) essentially saying that $\chi(\Sigma)$ is ``almost" a cross section for the quotient flow.

\begin{lemma}
Given a strip $ F$ which intersects $B(\varepsilon, \delta, \tau)$,
there exists $\bar\eta \in \Sigma$ such that $\Pi_{\Sigma}( F\cap B(\varepsilon, \delta, \tau))$ is a connected component of  ${\mathcal I}(\bar\eta)$. In particular, if ${\mathcal I}(\bar\eta) \subset  B(\varepsilon, \delta, \tau)$ then
$\Pi_{\Sigma}( F) = {\mathcal I}(\bar\eta)$ and
$$
	 F \cap B(\varepsilon, \delta, \tau)
	= \big\{\phi_{t}({\mathcal I}(\bar\eta))\colon\lvert t\rvert<\tau \big\}
	\,.
$$
\end{lemma}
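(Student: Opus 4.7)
The plan is to reduce $\Pi_\Sigma(F\cap B(\varepsilon,\delta,\tau))$ to the intersection $F\cap \Sigma$ using flow-invariance of the lifted strip, and then to identify $F\cap \Sigma$ with a connected component of $\mathcal I(\bar\eta)$ via a tangent-space analysis that exploits the flat strip structure. Throughout, $F$ is interpreted as its natural lift to $T_{1}\tilde{M}$, namely the $2$-dimensional flow-invariant surface consisting of the velocity vectors to the parallel geodesics of the strip oriented consistently with $\bar\theta$.

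First I would establish the identity $\Pi_\Sigma(F\cap B(\varepsilon,\delta,\tau)) = F\cap \Sigma$. For any $\bar\xi \in F \cap B(\varepsilon,\delta,\tau)$, writing $\bar\xi = \phi_{t_0}(\bar\eta)$ with $\bar\eta \eqdef \Pi_\Sigma(\bar\xi)\in \Sigma$ and $|t_0|<\tau$, flow-invariance of $F$ yields $\bar\eta \in F$; the reverse containment $F\cap\Sigma\subset \Pi_\Sigma(F\cap B(\varepsilon,\delta,\tau))$ is trivial since $\Sigma\subset B(\varepsilon,\delta,\tau)$ and $\Pi_\Sigma|_\Sigma=\mathrm{id}$.

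Next I would analyze $F\cap \Sigma$ via tangent spaces at a point $\bar\eta\in F\cap \Sigma$. The space $T_{\bar\eta} F$ is spanned by the geodesic flow direction and the ``width'' direction of the strip; the latter is induced by a parallel Jacobi field (because $F$ is flat), so its vertical component vanishes and its horizontal part is tangent to $I(\bar\eta)\subset H^+(\bar\eta)$. The space $T_{\bar\eta}\Sigma$ is spanned by the stable horocycle direction and the vertical direction; since $\bar\eta\in F$, the Busemann gradient $-\nabla b_{\bar\eta}^+$ is parallel along $I(\bar\eta)$ by the flatness of the strip, so the stable horocycle direction at $\bar\eta$ is also purely horizontal and tangent to $I(\bar\eta)$. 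The width and stable directions therefore agree up to sign, and since the flow direction is transverse to $\Sigma$, $T_{\bar\eta}(F\cap \Sigma)$ is one-dimensional and tangent to the stable foliation $\{R(\cdot,s)\}_s$ of $\Sigma$. Consequently the connected component of $F\cap \Sigma$ through $\bar\eta$ lies in a single leaf $R(\cdot, s_\eta)\subset \tilde{\mathscr F}^{\s}(\bar\eta)$, and using $F\cap \tilde{\mathscr F}^{\s}(\bar\eta)=\mathcal I(\bar\eta)$ (valid because $\bar\eta\in F$), this component equals $\mathcal I(\bar\eta)\cap R(\cdot, s_\eta)$, which is a connected component of $\mathcal I(\bar\eta)$. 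The ``in particular'' clause then follows: under the assumption $\mathcal I(\bar\eta)\subset B(\varepsilon,\delta,\tau)$, the containment $\mathcal I(\bar\eta)\subset \tilde{\mathscr F}^{\s}(\bar\eta)$ combined with the tube structure of $B(\varepsilon,\delta,\tau)$ forces $\mathcal I(\bar\eta)\subset \Sigma$, so $\Pi_\Sigma(F)=\mathcal I(\bar\eta)$, and flow-invariance of $F$ gives the description $F\cap B(\varepsilon,\delta,\tau)=\{\phi_t(\mathcal I(\bar\eta)):|t|<\tau\}$.

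The main obstacle is the tangential matching carried out above: identifying the width direction of the strip with the stable horocycle direction at points of the strip. This step relies crucially on the absence of focal points via the Flat Strip Theorem (Theorem~\ref{flatstrip}), which guarantees that Jacobi fields across the strip are parallel and that the Busemann gradient is parallel along $I(\bar\eta)$; neither property survives in general for manifolds merely without conjugate points. A secondary subtlety is that the stable foliation of $\Sigma$ is only Lipschitz, but the uniqueness of integral curves of Lipschitz line fields suffices to conclude that $F\cap \Sigma$ stays within a single leaf $R(\cdot,s_\eta)$.
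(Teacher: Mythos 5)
Your proposal takes a genuinely different route from the paper: instead of appealing directly to the way $\Sigma$ is built from stable leaves, you attempt an infinitesimal argument matching the tangent space of the lifted strip with the tangent space of $\Sigma$. The paper's proof is a one-liner appealing to ``the construction of $\Sigma$,'' and what that appeal implicitly encodes is a \emph{foliation-theoretic} fact that your argument never uses and in fact needs.

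There are two concrete problems. First, a regularity problem: the map $R$ in the construction of $\Sigma$ is only a homeomorphism, because the stable foliation $\tilde{\mathscr F}^\s$ is only a $C^0$ foliation (Theorem~\ref{central set}\,(i)) with $C^1$ leaves. The individual curves $R(\cdot,s)$ and the vertical fiber $R(0,\cdot)$ are $C^1$, but the transverse dependence on $s$ is not, so $T_{\bar\eta}\Sigma$ is not defined at a general point $\bar\eta\in\Sigma$. Your computation of $T_{\bar\eta}\Sigma$ as ``stable direction plus vertical direction'' is also only correct along $R(0,\cdot)$; at $R(r,s)$ with $r\neq 0$ the second generator $\partial R/\partial s$ is the (nondifferentiable) stable holonomy, not the vertical direction. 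The geometric identification you carry out — that on the lifted strip the width direction agrees with the stable horocycle direction, both being horizontal and tangent to $I(\bar\eta)$ — is a correct and pleasant observation, but it cannot be combined with a transversal-intersection-of-tangent-spaces argument because one of the two objects is not $C^1$.

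Second, and more fundamentally, your argument is purely local: it shows that each connected component of $F\cap\Sigma$ stays within a single stable leaf $R(\cdot,s)$ of $\Sigma$, but it does not show that $F\cap\Sigma$ consists of a \emph{single} such component, which is what the statement asserts (``is \emph{a} connected component of $\mathcal I(\bar\eta)$''). A priori $F$ could meet $R(\cdot,s)$ and $R(\cdot,s')$ for distinct $s,s'$. The reason this cannot happen is the structural fact underlying the paper's ``by construction'': the points $R(0,s)$, $s\in(-\delta,\delta)$, lie in the vertical fiber $V_\delta(\bar\theta)$, and by Theorem~\ref{central set}\,(ii) (the homeomorphism $\Psi_p$) distinct vectors in a vertical fiber lie in pairwise distinct center-stable leaves $\tilde{\mathscr F}^{\cs}(R(0,s))$, whereas the lifted strip $F$ is contained in a \emph{single} center-stable leaf $\tilde{\mathscr F}^{\cs}(\bar\eta)\cap\tilde{\mathscr F}^{\cu}(\bar\eta)$. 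Hence $F$ can meet $\tilde{\mathscr F}^\s(R(0,s))\subset\tilde{\mathscr F}^{\cs}(R(0,s))$ for at most one value of $s$, and $F\cap\tilde{\mathscr F}^\s(R(0,s))=\mathcal I(R(0,s))$ then gives the claim directly, without any differential computation. This center-stable uniqueness is the key ingredient your proposal is missing; with it in hand, the tangent-space step becomes unnecessary and the regularity issue evaporates.
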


\begin{proof}
Observe that, by the construction of $\Sigma$, each strip $F$ intersects $\Sigma$ 
 in a connected component of some ${\mathcal I}(\bar\eta)$, $\bar\eta \in \Sigma$.
 Moreover, the geodesic flow preserves the sets ${\mathcal I}$, that is, for every $t \in {\mathbb R}$ and  for every $\bar\eta \in T_{1}\tilde{M}$ we have $\phi_{t}({\mathcal I}(\bar\eta))= {\mathcal I}(\phi_{t}(\bar\eta))$.
This gives the claim.
\end{proof}

\begin{figure}
\begin{overpic}[scale=.35,
  ]{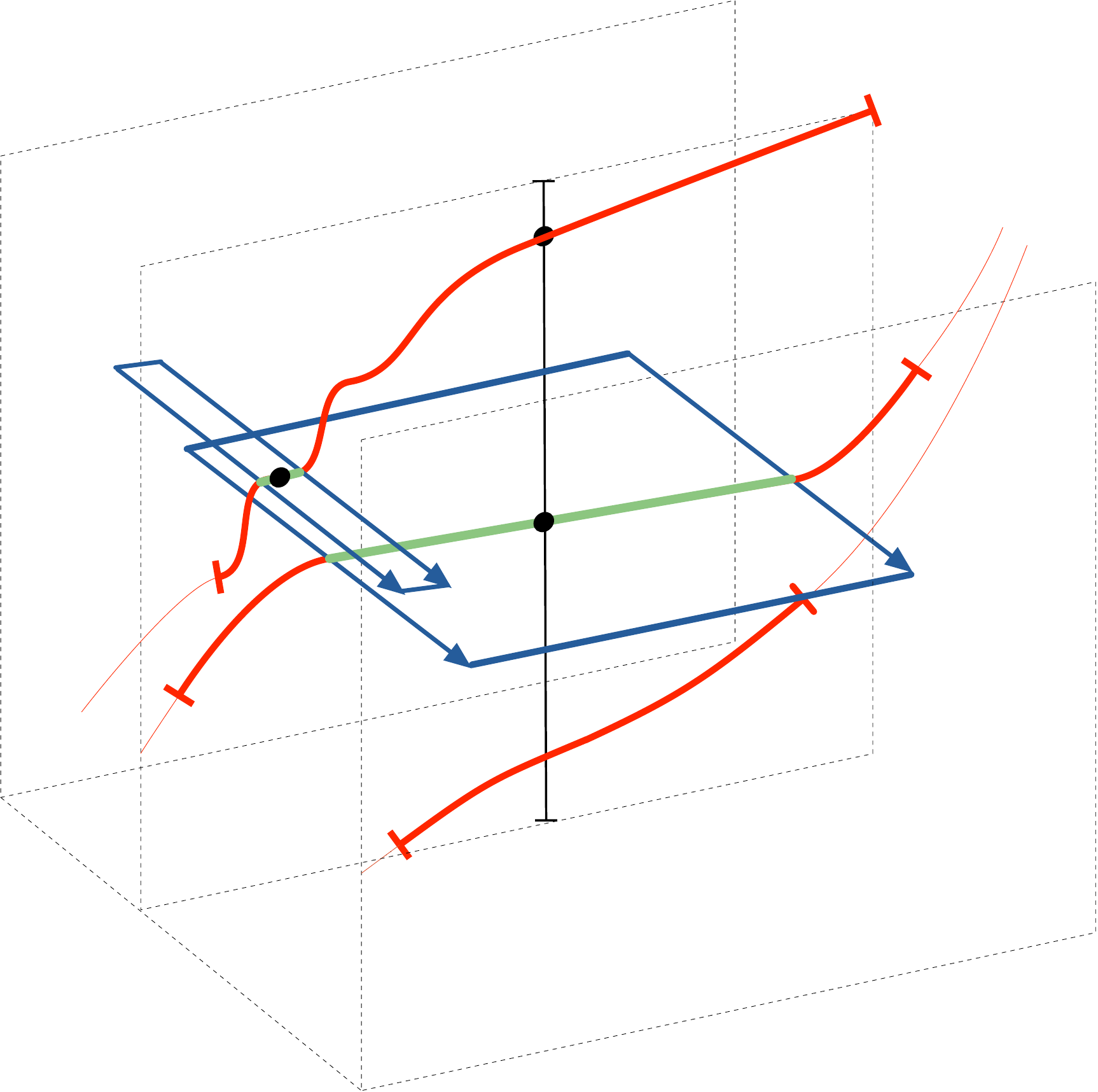}
       \put(6,64){\tiny$ F$}
       \put(28,55){\tiny$\bar\eta$}
       \put(51,50){\tiny$\bar\theta$}
\end{overpic}
\caption{Flat strips (blue) in $B_{\bar{\theta}}(\varepsilon,\delta,\tau)$ and their projections (green) to $\Sigma_{\bar{\theta}}(\varepsilon,\delta)$}
\label{fig:2}
\end{figure}

Let us denote
\[ \begin{split}
	\Sigma^{+}&\eqdef \{R(r,s)\colon r\in(r_0^--\varepsilon,r_0^++\varepsilon),
		s\in(0,\delta)\}\subset\Sigma,\\
	\Sigma^{-}&\eqdef \{R(r,s)\colon  r\in(r_0^--\varepsilon,r_0^++\varepsilon),
		s\in(-\delta,0)\}\subset\Sigma
	\,.
\end{split}\]
For each point $\bar{\eta} \in B(\varepsilon, \delta, \tau)$, denote by
\[
	B^{\cs}_{\bar\eta}(\varepsilon,\delta,\tau)\subset
	\tilde{\mathscr F}^{\cs}(\bar{\eta}) \cap B(\varepsilon, \delta, \tau),\quad
	B^{\cu}_{\bar\eta}(\varepsilon,\delta,\tau)\subset
	\tilde{\mathscr F}^{\cu}(\bar{\eta}) \cap B(\varepsilon, \delta, \tau)
\]	
the connected components of the intersections of the central stable and unstable sets of $\bar{\eta}$ with $B(\varepsilon, \delta, \tau)$
which contain $\bar{\eta}$, respectively. Given $\bar{\eta} \in \Sigma$ 
let
\begin{equation}\label{eq:defWSU}\begin{split}
 W^\s_\Sigma(\bar{\eta})
	&\eqdef
	 \Pi_{\Sigma}\big(B^{\cs}_{\bar\eta}(\varepsilon,\delta,\tau)\big)
	= B^{\cs}_{\bar\eta}(\varepsilon,\delta,\tau)) \cap \Sigma_{\bar{\theta}}(\varepsilon,\delta)
	,\\
	 W^\u_\Sigma(\bar\eta)
	&\eqdef
	 \Pi_{\Sigma}\big(B^{\cu}_{\bar\eta}(\varepsilon,\delta,\tau)\big)\,.
\end{split}\end{equation}
Note that, in fact, by the definition of the map $R$, for every $\bar{\eta} \in \Sigma$  there exist parameters $r,s$ such that $W^{\s}_\Sigma(\bar{\eta}) = R(r,s)$.
However, in general $W^{\u}_\Sigma(\bar{\eta})$ may not satisfy such a property.

If the section $\Sigma$ is sufficiently narrow in the vertical direction being close enough to ${\mathcal I}(\bar{\theta})$ then every two different points in $\Sigma$ are heteroclinically related.
Given $\bar\theta$, choose $\delta_0=\delta_0(\bar\theta)>0$ small so that $\tilde{\mathscr F}^{\cu}(\bar\theta)$ intersects $\tilde{\mathscr F}^\s(R(0,\pm\delta_0))$.
Moreover, for $\delta\in(0,\delta_0)$ there exist $\varepsilon_0=\varepsilon_0(\bar\theta,\delta)>0$ such that for every $\varepsilon\in(0,\varepsilon_0)$ for every $r \in (r_0^- -\varepsilon,r_0^-)\cup (r_0^+, r_0^++\varepsilon)$ and every $s$ with $\lvert s\rvert <\delta$ we have
$$
	 W^\u_\Sigma(R(r,0)) \cap  W^\s_\Sigma(R(0,s) )
	\subset  \Sigma_{\bar{\theta}}(\varepsilon,\delta)
	\,.
$$

The basis we will construct is in many respects a ``blow up'' of the classical
local product neighborhood in hyperbolic dynamics (see for example~\cite[Chapter 6.4]{KatHas:95}).

\begin{definition}
Given $\bar\eta,\bar\xi\in\Sigma_{\bar\theta}(\varepsilon,\delta)$,  let
\[
	[\bar{\eta}, \bar{\xi}\,]\eqdef
	 W^\s_\Sigma(\bar{\eta})\cap  W^\u_\Sigma(\bar{\xi})\,.
\]
\end{definition}

As a consequence of Corollary \ref{newexpgeneric}, given $\varepsilon\in(0,\varepsilon_0)$ there exist numbers $\rho^-,\rho^+\in(0,\varepsilon)$ such that 
\[
	\bar\theta^{-}= R(r_0^- -\rho^-,0)
	\quad\text{ and }\quad
	\bar\theta^{+} = R(r_0^+ + \rho^+,0)
\]	
are expansive points and hence the sets $W^\u_\Sigma(\bar\theta^\pm)$ are curves which are disjoint from $W^\u_\Sigma(\bar{\theta})$. Moreover,
all such curves bound a region 
in $\Sigma_{\bar{\theta}}(\varepsilon,\delta)$ which is homeomorphic to a rectangle whose relative interior contains ${\mathcal I}(\bar{\theta})$ (compare Figure~\ref{fig:4}).
\begin{figure}
\begin{overpic}[scale=.45,%
  ]{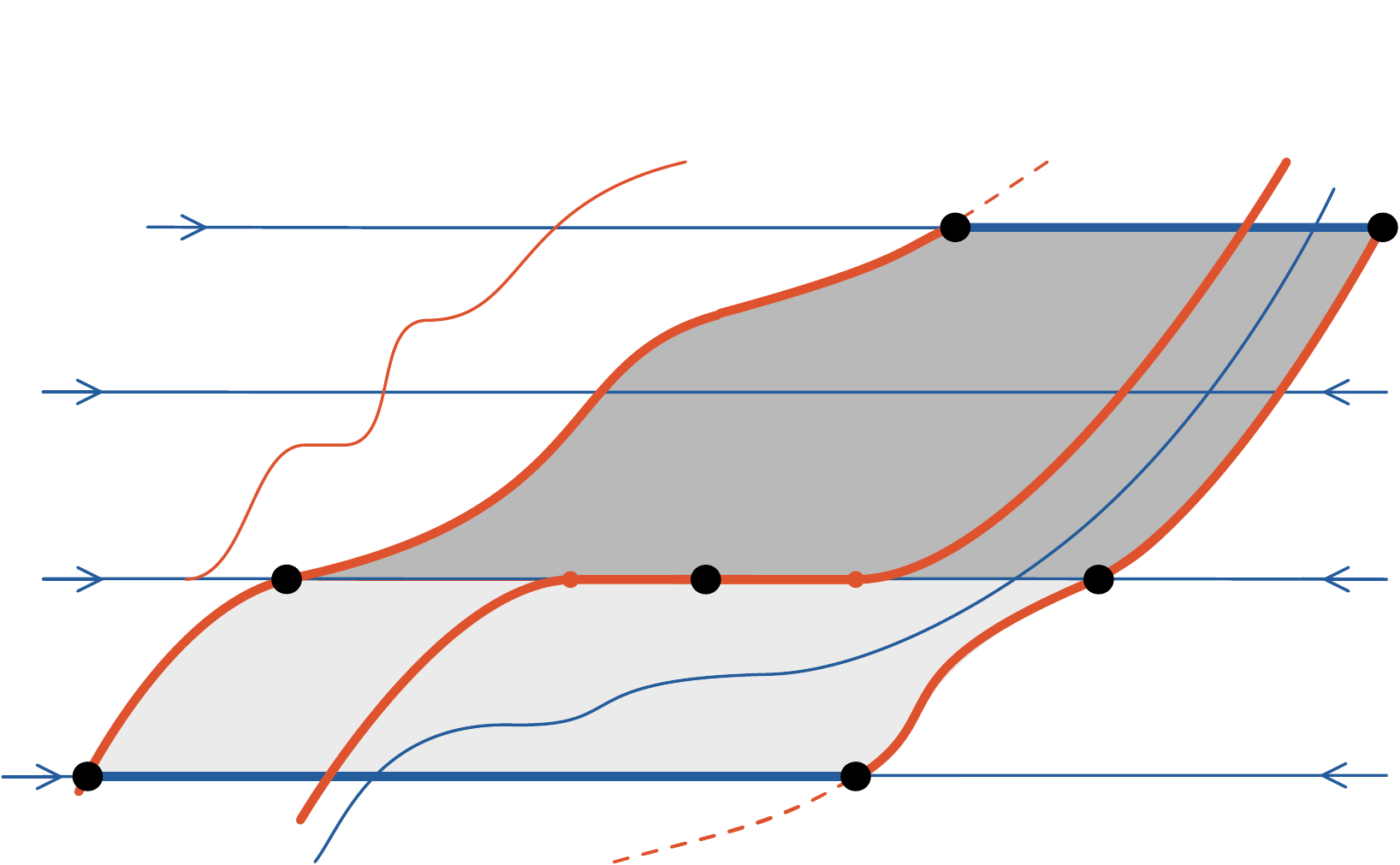}
      \put(-8,9){\tiny$[\bar\eta^{+},\bar\theta^{-}]$}
      \put(98,49){\tiny$[\bar\eta^{-},\bar\theta^{+}]$}
      \put(60,9){\tiny$\bar\eta^{+}$}
      \put(19,23){\tiny$\bar\theta^{-}$}
      \put(79,23){\tiny$\bar\theta^{+}$}
      \put(-8,2){\tiny\textcolor{red}{$W^\u_\Sigma(\bar\theta^{-})$}}
      \put(32,2){\tiny\textcolor{red}{$W^\u_\Sigma(\bar\theta^{+})$}}
      \put(10,2){\tiny\textcolor{red}{$W^\u_\Sigma(\bar{\theta})$}}
      \put(-10,20){\tiny\textcolor{blue}{$W^\s_\Sigma(\bar{\theta})$}}
      \put(50,23){\tiny$\bar{\theta}$}
      \put(67,49){\tiny$\bar{\eta}^-$}
      \put(110,30){$\Sigma^+$}
      \put(110,13){$\Sigma^-$}
\end{overpic}
\caption{Region defined by expansive points $\bar\theta^{\pm}$ and $\bar\eta^\pm$, contained in the region $\Sigma_{\bar{\theta}}(\varepsilon,\delta)$ splits into $\Sigma^+$ and $\Sigma^-$, and containing the open set $U_{\bar\theta}(\varepsilon,\delta,\bar\theta^-,\bar\theta^+,\bar\eta^-,\bar\eta^+)$ (shaded region)}
\label{fig:4}
\end{figure}
The set $W^\s_\Sigma(\bar{\theta})$ divides $W^\u_\Sigma(\bar\theta^{-})$ into two parts: one in $\Sigma^{+}$ and one in $\Sigma^{-}$.
By Theorem~\ref{transitivity} item (iii) we have
\[
	\bar\eta^{-} \in W^\u_\Sigma(\bar\theta^{-})\cap \Sigma^{+}
	\quad\text{ and }\quad
	\bar\eta^{+} \in  W^\u_\Sigma(\bar\theta^{+})\cap \Sigma^{-}
\]	
and the intersections $[\bar\eta^{-},\bar\theta^{+}]$ and  $[\bar\eta^{+},\bar\theta^{-}]$ are nonempty (though, may be contained in a nontrivial strip), and by the previous remarks we can assume that such intersections are in $\Sigma_{\bar{\theta}}(\varepsilon,\delta)$.
Fixing any such points $\bar\eta^-$ and $\bar\eta^+$, let us denote by $U_{\bar\theta}(\varepsilon,\delta, \bar\theta^-,\bar\theta^+,\bar\eta^-,\bar\eta^+)$ the open two-dimensional region
in $\Sigma_{\bar{\theta}}(\varepsilon,\delta)$ whose boundary is formed by the described pieces of stable and unstable arcs. 
The region $U_{\bar\theta}(\varepsilon,\delta , \bar\theta^-,\bar\theta^+,\bar\eta^-,\bar\eta^+)$ clearly contains $ {\mathcal I}(\bar\theta)$.

\begin{lemma} \label{boundarycurve}
	Given $\bar{\theta} \in T_{1}\tilde{M}$, let $\delta_0=\delta_0(\bar\theta)$, $\delta\in(0,\delta_0)$, $\varepsilon_0=\varepsilon_0(\bar\theta,\delta)$, $\varepsilon\in(0,\varepsilon_0)$,  expansive points $\bar{\theta}^{-}=R(r_0^--\rho^-,0)$, $\bar{\theta}^{+}=R(r_0^++\rho^+,0)$ with $\rho^\pm\in(0,\varepsilon)$, and points $\bar\eta^\pm\in W^\u_\Sigma(\bar\theta^\pm)\cap\Sigma^\mp$, the above constructed region $U=U_{\bar\theta}(\varepsilon,\delta,\bar{\theta}^{-},\bar{\theta}^{+},\bar\eta^-,\bar\eta^+)$ in the section $\Sigma_{\bar\theta}(\varepsilon,\delta )$ has the following properties:
\begin{enumerate}
\item [(i)] We have $\bar{\chi}^{-1}(\bar{\chi}(U))=U$
	and hence the set $\bar{\chi}(U)$ 
	is an open neighborhood of $\bar{\chi}(\bar{\theta})$ in the quotient topology restricted to $\bar{\chi}(\Sigma_{\bar\theta}(\varepsilon,\delta ))$.\\[-0.3cm]
\item [(ii)]
For every positive numbers $t',\delta',\varepsilon'$ we can choose $\delta,\varepsilon$ such that the above considered region $U=U_{\bar\theta}(\varepsilon,\delta,\bar{\theta}^{-},\bar{\theta}^{+},\bar\eta^-,\bar\eta^+)$  for every $t$ with $\lvert t\rvert\le t'$ satisfies
\[
	\phi_t(U)=
	U_{\phi_t(\bar\theta)}\big(\varepsilon, \delta,
		\phi_t(\bar\theta^{-}),\phi_t(\bar\theta^{+}),
		\phi_t(\bar\eta^-),\phi_t(\bar\eta^+)\big)
	\subset
	\Sigma_{\phi_{t}(\bar{\theta})}(\varepsilon',\delta'),
\]	
and moreover,
\[
	 \phi_t(U)
	 \subset
	 U_{\phi_t(\bar\theta)}\big(\varepsilon, \delta',
	 \phi_t(\bar\theta^-), \phi_t(\bar\theta^+),\phi_t(\bar\eta^-),\phi_t(\bar\eta^+)\big).
\]	
\item [(iii)] The set $\bar{\chi}(U)$ is a (topological) local cross section of the quotient flow in $\bar{X}$, that is, there exists an open set containing $\bar\chi(\bar\theta)$ such that every orbit of the quotient flow restricted to this set intersects $\bar\chi(U)$ in just one point.
\end{enumerate}
\end{lemma}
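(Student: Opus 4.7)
The strategy is to prove the three parts in order, with the main effort concentrated in part (i). For (i) I will first establish the saturation identity $\bar{\chi}^{-1}(\bar{\chi}(U))=U$ and then extract the openness of $\bar{\chi}(U)$ from a flow-box argument. The key observation is that the equivalence class of any $\bar{\eta}$ under the relation lifted to $T_{1}\tilde{M}$ is precisely $\mathcal{I}(\bar{\eta})$, which by construction lies inside the single stable leaf $R(\cdot,s_{\bar{\eta}})$ of $\tilde{\mathscr F}^{\s}$ restricted to $\Sigma$. By the uniform width bound on strips (Proposition~\ref{newMorseshadowing} and Lemma~\ref{width}), after possibly shrinking $\varepsilon,\delta$ one ensures that $\mathcal{I}(\bar{\eta})\subset\Sigma$ for every $\bar{\eta}\in U$. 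Two transverse checks then give $\mathcal{I}(\bar{\eta})\subset U$: vertically, the arc sits at the single height $s_{\bar{\eta}}$, distinct from the heights of the stable boundary arcs $W^{\s}_{\Sigma}(\bar{\eta}^{\pm})$; horizontally, since $\bar{\eta}$ lies strictly between $W^{\u}_{\Sigma}(\bar{\theta}^{\pm})$, the unstable leaf $\tilde{\mathscr F}^{\u}(\bar{\eta})$ is distinct from $\tilde{\mathscr F}^{\u}(\bar{\theta}^{\pm})$ and hence disjoint from them as leaves of a foliation, so $\mathcal{I}(\bar{\eta})\subset\tilde{\mathscr F}^{\u}(\bar{\eta})$ never meets the unstable boundaries of $U$.

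Once the saturation is established, I will obtain openness by passing to the three-dimensional flow box $\widehat{U}\eqdef\bigcup_{|t|<\tau}\phi_{t}(U)\subset T_{1}\tilde{M}$ for a small $\tau>0$. Using the flow-invariance of strips, $\mathcal{I}(\phi_{t}(\bar{\eta}))=\phi_{t}(\mathcal{I}(\bar{\eta}))$, the identity $\bar{\chi}^{-1}(\bar{\chi}(\widehat{U}))=\widehat{U}$ follows from the saturation of $U$, so $\bar{\chi}(\widehat{U})$ is open in $\bar{X}$ by the definition of the quotient topology, and its intersection with $\bar{\chi}(\Sigma_{\bar{\theta}}(\varepsilon,\delta))$ recovers $\bar{\chi}(U)$.

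For part (ii), the equality $\phi_{t}(U)=U_{\phi_{t}(\bar{\theta})}(\varepsilon,\delta,\phi_{t}(\bar{\theta}^{\pm}),\phi_{t}(\bar{\eta}^{\pm}))$ is immediate: $\phi_{t}$ carries each of the four boundary arcs of $U$ to the corresponding arc through the flowed boundary points, since $\phi_{t}$ preserves both stable and unstable foliations. The containment $\phi_{t}(U)\subset\Sigma_{\phi_{t}(\bar{\theta})}(\varepsilon',\delta')$ for $|t|\le t'$ follows from uniform continuity of the geodesic flow on the compact interval $[-t',t']$: by initially shrinking $\varepsilon,\delta$, the image $\phi_{t}(U)$ can be forced into any prescribed neighborhood of $\phi_{t}(\bar{\theta})$. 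The ``moreover'' inclusion is obtained by the same continuity argument adjusting only the vertical width parameter. For part (iii), since $\Sigma_{\bar{\theta}}(\varepsilon,\delta)$ is transverse to the geodesic flow there exists $\tau>0$ such that the flow map is injective on $\Sigma\times(-\tau,\tau)$; hence each geodesic orbit meets $U$ in at most one point. Pushing this to the quotient via the time-preserving semi-conjugacy $\bar{\chi}\circ\phi_{t}=\bar{\psi}_{t}\circ\bar{\chi}$ and using the saturation from (i), the open neighborhood $\bar{\chi}(\widehat{U})$ of $\bar{\chi}(\bar{\theta})$ has the property that each $\bar{\psi}_{t}$-orbit in it meets $\bar{\chi}(U)$ in exactly one point.

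The main obstacle lies in part (i): ensuring, for every $\bar{\eta}\in U$, that the full arc $\mathcal{I}(\bar{\eta})$ fits inside $\Sigma$ and stays trapped horizontally between $W^{\u}_{\Sigma}(\bar{\theta}^{\pm})$. This rests on the Euclidean rigidity of flat strips provided by the Flat Strip Theorem (Theorem~\ref{flatstrip}) combined with the uniform width bound $Q$ of Proposition~\ref{newMorseshadowing}, and is the point where the assumption of no focal points plays a crucial role.
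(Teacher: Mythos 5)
Your proposal follows essentially the same route as the paper: establish the saturation $\bar\chi^{-1}(\bar\chi(U))=U$ by showing that each arc $\mathcal I(\bar\eta)$, $\bar\eta\in U$, is trapped inside $U$ because it cannot cross the boundary arcs (these being pieces of foliation leaves), then read off openness from the quotient topology, flow equivariance for (ii), and transversality for (iii). Two small imprecisions in your argument for (i) are worth flagging, even though neither defeats the approach. First, the claim that ``after possibly shrinking $\varepsilon,\delta$ one ensures that $\mathcal I(\bar\eta)\subset\Sigma$'' is both directionally wrong and unnecessary: shrinking $\varepsilon,\delta$ makes $\Sigma$ smaller, so it cannot help a strip $\mathcal I(\bar\eta)$ (whose size is determined by the geometry, not by the section) fit inside $\Sigma$; the containment $\mathcal I(\bar\eta)\subset\Sigma$ is not a consequence of the width bound $Q$ but of exactly the trapping argument you give next, namely that the arc $\mathcal I(\bar\eta)$ cannot cross either pair of boundary curves and hence never leaves $U\subset\Sigma$. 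Second, you justify horizontal trapping by disjointness of the strong unstable leaves $\tilde{\mathscr F}^{\u}(\bar\eta)$ and $\tilde{\mathscr F}^{\u}(\bar\theta^\pm)$, but the boundary curves $W^{\u}_\Sigma(\bar\theta^\pm)$ are, by definition, sections of the flow-saturated center unstable sets $\tilde{\mathscr F}^{\cu}(\bar\theta^\pm)$ projected onto $\Sigma$, not arcs of $\tilde{\mathscr F}^{\u}$; the disjointness that does the work (as in the paper's proof) is that of distinct leaves of $\tilde{\mathscr F}^{\cu}$, combined with the flow invariance of $\tilde{\mathscr F}^{\cu}(\bar\eta)$ to descend to $\Sigma$. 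With these two points corrected your argument coincides with the paper's.
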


\begin{proof}
Given $\bar{\xi} \in U$, we have $\bar{\chi}^{-1}(\bar{\chi}(\bar{\xi}))={\mathcal I}(\bar{\xi}) \subset \Sigma_{\bar\theta}(\varepsilon,\delta)$. By the construction of $ \Sigma_{\bar\theta}(\varepsilon,\delta)$, the curve ${\mathcal I}(\bar{\xi})$ meets the boundary $C$ of $U$ if, and only if, $\bar{\xi}$ is in $C$ already. Because this boundary  is made of pieces of center stable and center unstable leaves, if a strip through a point $\bar{\xi} \in  \Sigma_{\bar\theta}(\varepsilon,\delta)$ meets $C$ then the whole set ${\mathcal I}(\bar{\xi})$ must be included in one of these pieces of center stable and center unstable leaves.
So we conclude $\bar{\chi}^{-1}(\bar{\chi}(\bar{\xi}))\in U$ for every
$\bar{\xi} \in U$. Since clearly $\bar{\chi}^{-1}(\bar{\chi}(U))\supset U$ we have shown $\bar{\chi}^{-1}(\bar{\chi}(U))=U$.

By the definition of the quotient topology restricted to $ \Sigma_{\bar\theta}(\varepsilon,\delta)$, we have that $\bar{\chi}(U)$ is a relative open neighborhood of $\bar{\chi}(\bar{\xi})$ in $\bar{\chi}( \Sigma_{\bar\theta}(\varepsilon,\delta))$ thus proving item (1) in the lemma.

The proof of item (2) follows from the construction of $U$. Indeed, all the dynamical objects involved in its construction, that is, stable leaves and heteroclinic intersections, are invariant by the geodesic flow. The constants $\varepsilon$ and $\delta $
may vary a little since they are geometric quantifiers of compact pieces of stable leaves which contain strips. The size of a strip does not
change under the action of the geodesic flow but the size of a neighborhood of it changes continuously. From the above statements is
straightforward to conclude item (2).

Item (3) follows from the construction by the definition of equivalence relation within any strip.
\end{proof}

\begin{lemma} \label{basis}
	Given $\bar{\theta} \in T_{1}\tilde{M}$,  $\delta_0=\delta_0(\bar\theta)$, $\delta\in(0,\delta_0)$, $\varepsilon_0=\varepsilon_0(\bar\theta,\delta)$, $\varepsilon\in(0,\varepsilon_0)$, expansive points $\bar{\theta}^{-}=R(r_0^--\rho^-,0)$, $\bar{\theta}^{+}=R(r_0^++\rho^+,0)$ with $\rho^\pm\in(0,\varepsilon)$, and points $\bar\eta^\pm\in W^\u_\Sigma(\bar\theta^\pm)\cap\Sigma^\mp$%
, consider the above constructed region $U=U_{\bar\theta}(\varepsilon,\delta,\bar{\theta}^{-},\bar{\theta}^{+},\bar\eta^-,\bar\eta^+)$ in the section $\Sigma_{\bar\theta}(\varepsilon,\delta )$. Then for $\tau>0$ small enough  the set
\begin{equation}\label{def:setA}
	A=A_{\bar{\theta}}(\tau,\varepsilon,\delta,
			\bar\theta^-,\bar\theta^+,\bar\eta^-,\bar\eta^+)
	\eqdef
	\bigcup_{\lvert t \rvert <\tau} \phi_{t}(U)
\end{equation}
satisfies $\bar{\chi}^{-1}(\bar{\chi}(A))= A$.

Hence, the collection of such sets
\[
 	\Big\{\bar{\chi}\big(A_{\bar{\theta}}(\tau,\varepsilon,\delta,
			\bar{\theta}^{-},\bar{\theta}^{+},\bar\eta^-,\bar\eta^+)\big)\Big\}
\]
forms a basis for the quotient topology of $\bar{X}$.
\end{lemma}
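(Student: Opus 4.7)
The proof splits naturally into two assertions: (i) the equality $\bar{\chi}^{-1}(\bar{\chi}(A))=A$, and (ii) the collection of sets $\bar{\chi}(A)$ forms a basis for the quotient topology on $\bar{X}$. For (i), the inclusion $A\subset\bar{\chi}^{-1}(\bar{\chi}(A))$ is immediate. For the reverse, suppose $\bar{\xi}'\in\bar{\chi}^{-1}(\bar{\chi}(A))$; then $\bar{\xi}'\sim\bar{\xi}$ for some $\bar{\xi}\in A$, so $\bar{\xi}'\in\mathcal{I}(\bar{\xi})$. Write $\bar{\xi}=\phi_{t}(\bar{\zeta})$ with $\bar{\zeta}\in U$ and $\lvert t\rvert<\tau$. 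Since the geodesic flow commutes with the formation of strips, $\mathcal{I}(\bar{\xi})=\phi_{t}(\mathcal{I}(\bar{\zeta}))$. By Lemma~\ref{boundarycurve}(i) we have $\bar{\chi}^{-1}(\bar{\chi}(U))=U$, hence $\mathcal{I}(\bar{\zeta})\subset U$, and therefore $\bar{\xi}'\in\phi_{t}(U)\subset A$, settling (i).

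For (ii), I would first check that each $\bar{\chi}(A)$ is open in the quotient topology. Since $U$ is an open two-dimensional region in the transverse section $\Sigma_{\bar{\theta}}(\varepsilon,\delta)$, for $\tau$ small enough the map $(\bar{\xi},t)\mapsto\phi_{t}(\bar{\xi})$ from $\Sigma_{\bar{\theta}}(\varepsilon,\delta)\times(-\tau,\tau)$ into $T_{1}\tilde{M}$ is an embedding onto a flow box, so $A$ is open in $T_{1}\tilde{M}$. Combined with (i), this shows $\bar{\chi}(A)$ is open. It then remains to prove the refining property: given any open $V\subset\bar{X}$ and any class $[\bar{\theta}]\in V$, one can select parameters so that $[\bar{\theta}]\in\bar{\chi}(A)\subset V$. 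The preimage $\bar{\chi}^{-1}(V)$ is an open set in $T_{1}\tilde{M}$ which is saturated by the equivalence relation and contains the compact set $\mathcal{I}(\bar{\theta})$, so by a standard tubular argument it contains a neighborhood of the form $\{\phi_{t}(\bar{\xi})\colon\lvert t\rvert<\tau_{0},\ \bar{\xi}\in W\}$ where $W$ is an open neighborhood of $\mathcal{I}(\bar{\theta})$ in some transverse section through $\bar{\theta}$.

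The plan is then to shrink all the parameters defining $A$ so that $A$ lies in this tubular neighborhood. One takes $\varepsilon$ and $\delta$ small (so $\Sigma_{\bar{\theta}}(\varepsilon,\delta)\subset W$), selects expansive points $\bar{\theta}^{\pm}$ on $\tilde{\mathscr F}^{\s}(\bar{\theta})$ arbitrarily close to the endpoints $R(r_{0}^{\pm},0)$ of $\mathcal{I}(\bar{\theta})$ (possible by density of expansive points from Corollary~\ref{newexpgeneric}), and picks heteroclinic points $\bar{\eta}^{\pm}\in W^{\u}_{\Sigma}(\bar{\theta}^{\pm})\cap\Sigma^{\mp}$ on short pieces of the corresponding unstable arcs. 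By continuity of the center stable and center unstable foliations (Theorem~\ref{central set}), together with Theorem~\ref{transitivity}(iii), the boundary arcs of the rectangle $U$ can be made to lie in a prescribed neighborhood of $\mathcal{I}(\bar{\theta})$, so that $U\subset W$; finally picking $\tau\le\tau_{0}$ gives $A\subset\bar{\chi}^{-1}(V)$ and hence $\bar{\chi}(A)\subset V$. Since $\bar{\theta}=R(0,0)\in U\subset A$, the class $[\bar{\theta}]$ belongs to $\bar{\chi}(A)$, finishing the basis property.

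The main obstacle, in contrast with the Anosov/expansive case where one shrinks both ``radii'' of a product box to zero, is that the ``core'' $\mathcal{I}(\bar{\theta})$ has positive length which cannot be compressed; the rectangle $U$ can only be made thin transversally to $\mathcal{I}(\bar{\theta})$, not short along it. This forces the boundary of $U$ to be assembled from nearby \emph{expansive} center stable and center unstable arcs, and the delicacy is to verify that such boundary choices exist arbitrarily close to the endpoints of $\mathcal{I}(\bar{\theta})$ without crossing other nontrivial strips---which is exactly what Corollary~\ref{newexpgeneric} (density of expansive points, countability of nontrivial strips) together with the continuity of the foliations provide.
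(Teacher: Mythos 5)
Your proof is correct and takes essentially the same route as the paper. The one small streamlining is in the saturation step: you derive $\bar\chi^{-1}(\bar\chi(A))=A$ directly from the flow-invariance $\mathcal I(\phi_t(\bar\zeta))=\phi_t(\mathcal I(\bar\zeta))$ together with Lemma~\ref{boundarycurve}(i) applied once to $U$, whereas the paper first rewrites $A$ as a union $\bigcup_{|t|<\tau}U_{\phi_t(\bar\theta)}(\cdot)$ via Lemma~\ref{boundarycurve}(ii) (which is where the requirement that $\tau$ be small enters) and then applies (i) to each piece. Your basis argument---pull back an open $V\ni[\bar\theta]$ to a saturated open neighborhood of $\mathcal I(\bar\theta)$, then shrink $\varepsilon,\delta,\tau$ and choose expansive boundary points $\bar\theta^\pm$ close to the endpoints of $\mathcal I(\bar\theta)$ via Corollary~\ref{newexpgeneric}---is the same as the paper's, just written out with the parameter bookkeeping made explicit; your closing remark about $\mathcal I(\bar\theta)$ having incompressible length and forcing the boundary of $U$ to be built from nearby expansive arcs is precisely the point the paper's construction is designed to handle.
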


\begin{proof}
First observe that $A$ by definition is homeomorphic to $(-\tau, \tau) \times U$. Therefore, Brower's Open Mapping Theorem implies that each such set is open in $T_{1}\tilde{M}$.

Moreover, by Lemma \ref{boundarycurve} item (ii) we have
\[
	 A = \bigcup_{\lvert t \rvert <\tau}
	 \phi_t(U)=\bigcup_{\lvert t\rvert<\tau}
	 U_{\phi_{t}(\bar{\theta})}\big(\varepsilon,\delta,
	 		\phi_t(\bar\theta^-), \phi_t(\bar\theta^+),
			\phi_t(\bar\eta^-),\phi_t(\bar\eta^+))\big)
\]	
for $\tau$ small enough. Applying Lemma \ref{boundarycurve} item (i) to the above union of sets we deduce that $\bar{\chi}^{-1}(\bar{\chi}(A)) = A$ as claimed.

This yields that the family of all such sets $A_{\bar{\theta}}(\tau,\varepsilon, \delta ,\bar{\theta}^{-},\bar{\theta}^{+},\bar\eta^-,\bar\eta^+)$ specified as in the statement are open sets in the quotient $\bar{X}$ since, by definition, an open set for the quotient topology is any set whose preimage by the quotient map $\bar{\chi}$ is an open set in $T_{1}\tilde{M}$.
To see that they provide a basis for the quotient topology observe that the family of open sets $B_{\bar{\theta}}(\varepsilon, \delta, \tau')$ defined in~\eqref{def:Bball} is a basis
for the family of open neighborhoods of the sets ${\mathcal I}(\bar{\theta})$, and that $A_{\bar{\theta}}(\tau,\varepsilon,\delta,\bar{\theta}^{-},\bar{\theta}^{+},\bar\eta^-,\bar\eta^+)$ is contained in $B_{\bar{\theta}}(\varepsilon, \delta, \tau')$. So let $\bar{\chi}(\bar{\theta})$ be the equivalence class of $\bar{\theta}$ and
$V$ an open neighborhood of $\bar{\chi}(\bar{\theta})$ in the quotient $\bar{X}$. The set $\bar{\chi}^{-1}(V)$ is an open neighborhood of
${\mathcal I}(\bar{\theta})$ which contains some $B_{\bar{\theta}}(\varepsilon, \delta, \alpha)$ for some parameters $\varepsilon, \delta, \tau'$.
Hence the family of sets $A_{\bar{\theta}}(\tau, \rho, \delta, \bar{\theta}^{-},\bar{\theta}^{+},\bar\eta^-,\bar\eta^+)$ provides a basis, too.
\end{proof}

\subsection{Smooth manifold structure}

\begin{proposition} \label{productstructure}
Given $\bar{\theta} \in T_{1}\tilde{M}$, let $U=U_{\bar{\theta}}(\varepsilon, \delta,\bar{\theta}^{-},\bar{\theta}^{+},\bar\eta^-,\bar\eta^+)$  be the local cross section of the geodesic flow as defined  in the previous section and consider the corresponding set $A=A_{\bar{\theta}}(\tau,\varepsilon,\delta,
			\bar\theta^-,\bar\theta^+,\bar\eta^-,\bar\eta^+)$ as defined in~\eqref{def:setA} for some $\tau>0$. There exist numbers $a<a'$, $ b< b'$  depending on $\bar\theta,\delta,\varepsilon,\bar{\theta}^{-},\bar{\theta}^{+},\bar\eta^-,\bar\eta^+$ and  a homeomorphism
$$
	f\colon
	(a, a') \times (b,b') \times (-\tau,\tau)
	\longrightarrow
	\bigcup_{\lvert t \rvert <\tau}
	\bar{\psi}_{t}\big(\bar\chi( U)\big)
	= \bar\chi( A)
$$
for every $\tau>0$.

In particular, the quotient spaces $\bar{X}$ and $X$ are topological 3-manifolds.
\end{proposition}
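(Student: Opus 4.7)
The plan is to build an explicit homeomorphism from a product box onto $\bar\chi(A)$, parameterizing by (unstable-leaf index, stable-leaf index, flow time). Since $A=\bigcup_{\lvert t\rvert<\tau}\phi_t(U)$ and $\bar\chi$ intertwines the flows, once I have a homeomorphism $\tilde f\colon(a,a')\times(b,b')\to\bar\chi(U)$ I simply set $f(r,s,t)\eqdef\bar\psi_t(\tilde f(r,s))$; the flow extension is routine for small $\tau$ because $\Sigma$ is transverse to $\phi_t$, so the real work is two-dimensional.

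For $\tilde f$, I parameterize stable leaves through $U$ by $s\in(b,b')$ via the vertical-fiber curve $s\mapsto R(0,s)$, which is transverse to $\tilde{\mathscr F}^{\s}$. I parameterize unstable leaves by where they meet the reference stable leaf $W^{\s}_\Sigma(\bar\theta)=\{R(r,0)\}$: by Corollary~\ref{newexpgeneric}, the only nontrivial strip in $\tilde{\mathscr F}^{\cs}(\bar\theta)$ is $\mathcal I(\bar\theta)$, so two points $R(r_1,0),R(r_2,0)$ share an unstable leaf if and only if $r_1=r_2$ or both lie in $[r_0^-,r_0^+]$. Collapsing $[r_0^-,r_0^+]$ to a single point inside $(r_0^--\rho^-,r_0^++\rho^+)$ produces the interval $(a,a')$ of unstable-leaf indices. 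I then set
\[
	\tilde f(r,s)\eqdef\bar\chi\bigl([R(r,0),R(0,s)]\bigr),
\]
where the bracket is $W^{\s}_\Sigma(R(0,s))\cap W^{\u}_\Sigma(R(r,0))$. Even when this intersection is a full segment (because it meets a nontrivial strip on the stable leaf $R(\cdot,s)$), the whole segment lies in one strip and hence in a single $\sim$-class, so $\tilde f$ is well-defined on the collapsed parameter space.

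Injectivity follows because $\tilde f(r_1,s_1)=\tilde f(r_2,s_2)$ forces the two representatives to share stable and unstable leaves, hence $s_1=s_2$ and $r_1\sim r_2$; surjectivity is immediate since any class in $\bar\chi(U)$ has a representative lying on some stable and some unstable leaf meeting the reference curves. For continuity of $\tilde f$, the brackets vary continuously in Hausdorff distance by the local product structure coming from Theorem~\ref{transitivity}(iii), so any continuous single-valued selection composed with $\bar\chi$ is continuous. Openness of $f$ then follows from Lemma~\ref{basis}: the basis sets $\bar\chi(A_{\bar\theta}(\ldots))$ pull back under $f$ to sub-boxes of $(a,a')\times(b,b')\times(-\tau,\tau)$, so $f$ sends a basis to a basis.

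The hard part will be controlling the Hausdorff jumps of the set-valued bracket across strip boundaries: the bracket changes discontinuously from a single point to a positive-width segment as one enters a nontrivial strip, but the collapse performed by $\bar\chi$ is designed to annihilate precisely this discontinuity. Making this rigorous uses that, by Lemma~\ref{boundarycurve}(i), the region $U$ is $\bar\chi$-saturated and its boundary passes only through expansive points, so no strip meeting $U$ can extend past $\partial U$ and the collapsing is consistent throughout. Once $f$ is established as a homeomorphism, the sets $\bar\chi(A)$ cover $\bar X$ with each homeomorphic to an open 3-ball, proving $\bar X$ is a topological 3-manifold; since the equivalence relation $\sim$ is invariant under the deck transformation group $\Gamma$, which acts freely and properly discontinuously on $\bar X$, the same holds for $X=\bar X/\Gamma$, which is compact because $M$ is. A smooth structure on $X$ is then obtained in the standard way by any consistent choice of smoothing atlas on the topological 3-manifold, with respect to which $\psi_t$ is continuous since it is so in the quotient topology.
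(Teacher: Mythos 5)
Your approach is in the same general spirit as the paper's (parametrize $\bar\chi(U)$ by a pair of indices along two transverse reference curves via the bracket map, then flow), but you make one genuinely different design choice. The paper avoids the collapsing of $[r_0^-,r_0^+]$ entirely by first selecting a nearby point $\bar\theta^\ast\in U$ whose $\phi_t$-orbit is dense; by Lemma~\ref{lem:dense} its strip is trivial, and the paper then takes honest arc-length parametrizations $R^\s,R^\u$ of $\tilde{\mathscr F}^\s(\bar\theta^\ast)\cap U$ and $\tilde{\mathscr F}^{\cu}(\bar\theta^\ast)\cap U$ as its reference curves. The payoff is that the domain $(a,a')\times(b,b')$ is an honest open rectangle (no quotienting of the parameter space is ever needed), and the injectivity and continuity of $h(r,s)=\bar\chi([R^\u(r),R^\s(s)])$ are first established on a dense subset of heteroclinic intersections of expansive points (where brackets are singletons) and then extended to the whole rectangle through the quotient. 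Your version instead uses $\bar\theta$'s own reference leaf $W^\s_\Sigma(\bar\theta)=R(\cdot,0)$, which unavoidably passes through the nontrivial segment $\mathcal I(\bar\theta)$, forcing you to collapse $[r_0^-,r_0^+]$ to a point before $(a,a')$ even exists. That buys you directness (no auxiliary dense orbit, no density argument), at the cost of working with a quotiented parameter space and with a set-valued bracket whose Hausdorff behavior you then have to control by hand — which is exactly the step you flag as "the hard part."

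Two smaller remarks. First, your claim that "the basis sets $\bar\chi(A_{\bar\theta}(\ldots))$ pull back under $f$ to sub-boxes" is not literally right: the basis of Lemma~\ref{basis} consists of sets built around \emph{varying} reference points $\bar\eta$, and their pullbacks under $f$ are open regions bounded by stable/unstable arcs, not coordinate boxes; the paper argues openness simply by asserting continuity of $h^{-1}$ and then invoking invariance of domain for the resulting three-dimensional chart. Second, for the passage from $\bar X$ to $X$ the paper just notes that $X$ is locally homeomorphic to $\bar X$; your deck-group argument reaches the same conclusion but is doing unnecessary extra work in this step. Overall, the core ideas (bracket map, $\bar\chi$ collapsing strips to points, flow extension) match the paper, but the choice of reference point — $\bar\theta^\ast$ versus $\bar\theta$ with parameter-space collapse — is a genuine and worthwhile structural difference, with the paper's choice being the cleaner of the two because it keeps the domain a plain rectangle.
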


\begin{proof}
The proof relies essentially on the transitivity of the geodesic flow and the minimality of central foliations. By Theorem \ref{transitivity} item (i), each stable (unstable) leaf is dense in $U$. In particular, this holds for the central stable (central unstable) leaf of a dense orbit which by Lemma~\ref{lem:dense} has no nontrivial strips. 

Let $\theta^\ast\in T_{1}M$ be a point whose orbit is dense in $T_1M$, and let $\bar\theta^\ast$ one of its lifts in $T_{1}\tilde{M}$. Suppose that $\bar\theta^\ast \in U$ and that $d_{S}(\bar\theta^\ast, \bar{\theta}) <\delta$. The lifts of the stable set and the unstable set of the orbit of $\bar\theta^\ast$ in $T_{1}\tilde{M}$ are both dense in $\Sigma=\Sigma_{\bar{\theta}}(\varepsilon,\delta)$.

Consider the arc length parameterizations
\[\begin{split}
	&R^{\s}\colon (a,a') \longrightarrow
		U\quad\text{ with }\quad
	R^\s(0)=\bar\theta^\ast\\
	&R^{\u}\colon (b,b') \longrightarrow
		U\quad\text{ with }\quad
	R^\u(0)=\bar\theta^\ast
\end{split}\]	
of the arc of the intersection of the stable leaf $\tilde{\mathscr F}^\s(\bar\theta^\ast)$ with $U$ and  of the  arc of  intersection of the center unstable leaf
 $\tilde{\mathscr F}^{\cu}(\bar\theta^\ast)$  with $U$, respectively. Since we can choose $\bar\theta^\ast$ as closed to $\bar{\theta}$ as we wish, we can choose $a'-a$  very close to the length of the connected component  of the stable set of $\bar{\theta}$ in $U$ containing $\bar{\theta}$.
Analogously, $b' - b$ can be chosen very close to the length of the connected component of the unstable set of $\bar{\theta}$ in $U$ containing $\bar{\theta}$.

\begin{claim} 
	For every $r \in (a,a')$, $s \in (b,b')$ the point $[R^\u(r), R^\s(s)]$ is contained in $U$.
	 Moreover, the map
\[
h\colon (a,a') \times (b,b') \longrightarrow
\bar{\chi}(U)\colon
(r,s) \mapsto \bar{\chi}\big([R^\u(r), R^\s(s)]\big)
\]	
is a homeomorphism. In particular, $\bar{\chi}(U)$ is a topological 2-manifold.
\end{claim}

\begin{proof}
Recall the definition of $W^{\s/\u}_\Sigma(\cdot)$ in~\eqref{eq:defWSU}.
The set $U$ is foliated by
\[
	\{ W^\s_\Sigma(\bar{\eta})\cap U\colon\bar\eta\in U\}
	\quad\text{ and  }\quad
	\{ W^\u_\Sigma(\bar{\eta})\cap U\colon\bar\eta\in U\}
\]	
in a trivial way.
Both families of sets induce a product foliation of $U$ by embeddings of open segments.  Therefore, each of these sets splits its complement in the closure of $U$ into
two disjoint open regions (see Figure~\ref{fig:5}).
\begin{figure}
\begin{overpic}[scale=.55,
  ]{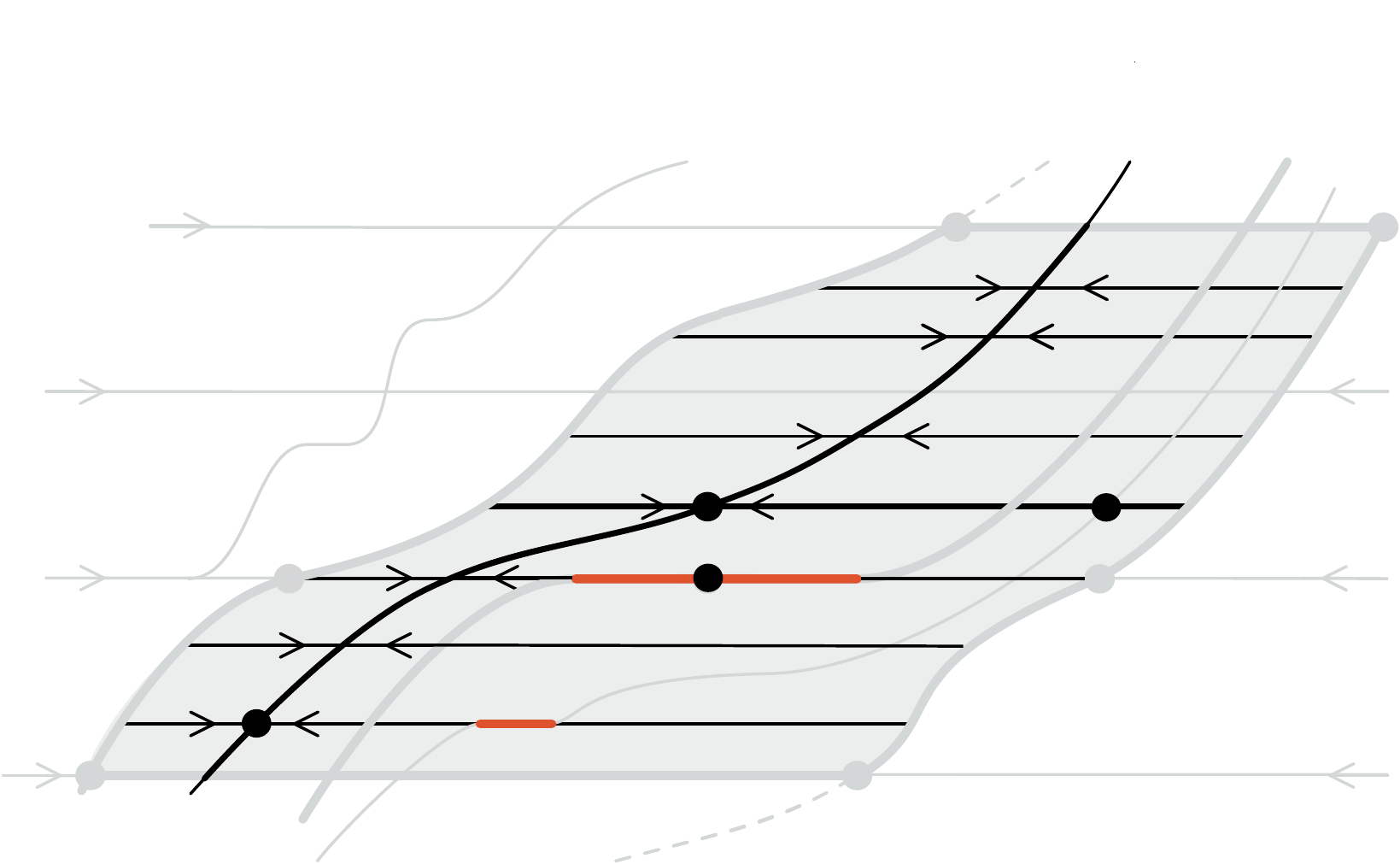}
      \put(50,27.5){\tiny$\bar\theta^\ast$}
      \put(56,21.5){\tiny$\mathcal I(\bar\theta)$}
      \put(34,11.5){\tiny$\mathcal I(\bar\eta)$}
      \put(88,25){\tiny$R^\s(r)$}
      \put(10,2){\tiny$R^\u(s)$}
      \put(-4,2){\tiny$[\bar\eta^{+},\bar\eta^{-}]$}
      \put(98,49){\tiny$[\bar\eta^{-},\bar\eta^{+}]$}
      \put(60,2){\tiny$\bar\eta^{+}$}
      \put(67,49){\tiny$\bar{\eta}^-$}
\end{overpic}
\caption{Parametrization based on a recurrent expansive point in $U$ (shaded region). All points in each strip (e.g. $\mathcal I(\bar\eta)$) have \emph{one} common pair $(r,s)$ of parameters}
\label{fig:5}
\end{figure}
Let $c^{\s}(\bar\eta^{-})$ denote the curve that is formed by the piece of $ W^\s_\Sigma(\bar\eta ^{-})$  bounded by $\bar\eta ^{-}$ and $[\bar\eta ^{-},\bar\eta ^{+}]$ and let $c^{\s}(\bar\eta ^{+})$ be the curve that is formed by $ W^\s_\Sigma(\bar\eta ^{+})$  bounded by $[\bar\eta ^{+},\bar\eta ^{-}]$ and $\bar\eta ^{+}$.
Analogously, let $c^{\u}(\bar\eta ^{-})$ denote the curve that is formed by the piece of $ W^\u_\Sigma(\bar\eta ^{-})$  bounded by $\bar\eta^{-}$ and $[\bar\eta ^{+},\bar\eta ^{-}]$ and let $c^{\u}(\bar\eta ^{+})$ be the curve that is formed by $ W^\u_\Sigma(\bar\eta ^{+})$  bounded by $\bar\eta ^{+}$ and $[\bar\eta ^{-},\bar\eta ^{+}]$.
Given $\bar\eta\in U$, the set $ W^\s_\Sigma(\bar{\eta})$ splits the closure of $U$ into two open disjoint regions, one of them containing $c^\s(\bar\eta^{-})$  and the other one containing $c^\s(\bar\eta^{+})$.
Analogously, each set $ W^\u_\Sigma(\bar{\eta})$  splits the closure of $ U$ into two open disjoint regions, one of them containing the curve $c^{\u}(\bar\eta^{-})$ and the other one the curve $c^{\u}(\bar\eta^{+})$.

For every $r\in(a,a')$ and $s\in(b,b')$ the point $[R^{\u}(r),R^{\s}(s)]$ is contained in $U$.
Indeed, each curve $ W^\s_\Sigma(R^\u(r))$ contains points of both  $c^{\u}(\bar\eta^{-})$ and $c^{\u}(\bar\eta^{+})$, so for every $s \in (b,b')$ by the Jordan Curve Theorem and the continuity of the stable foliation it has to cross $ W^\u_\Sigma(R^\s(s))$.
 Since by Lemma \ref{boundarycurve} each intersection of the form $W^\s_\Sigma(\bar{\eta}) \cap  W^\u_\Sigma(\bar{\eta})$ for $\bar{\eta} \in  U$  is a class in a strip that is contained in $U$, we get that $[R^\u(r),R^\s(s)] \in U$.

Moreover, the parameterizations $R^\s,R^\u$ induce continuous parameterizations of their quotients
\[
	\bar{\chi}\circ R^\s \colon (a,a') \longrightarrow \bar{\chi}(U)
	,\quad
	\bar{\chi}\circ R^\u \colon (b,b') \longrightarrow \bar{\chi}(U),
\]
because strips of orbits in the center stable and center unstable sets of $\bar\theta^\ast$ are trivial.
Therefore, the map
\[\begin{split}
	 &h\colon (a,a')\times (b,b') \longrightarrow \bar{\chi}(U), \\
	  &h(r,s) \eqdef
	   \bar{\chi}([R^\u(r),R^\s(s)])
	   =  [(\bar{\chi}\circ R^\u)(r),(\bar{\chi}\circ R^\s)(s)]
\end{split}\]	
defines a homeomorphism from an open rectangle onto $\bar{\chi}(U)$.
Indeed, the map $h$ is already a bijection restricted to the dense subset of intersections between the
center stable and center unstable sets of dense orbits intersecting $U$.
Taking the quotient we have that the map $h$ is continuous and injective in its image because
each set of the form $\bar{\chi}([R^\u(r),R^\s(s)])$ is just a point in the quotient.
By construction, the inverse of $h$ is also continuous. So we have a homeomorphism of a rectangle
onto  $\bar{\chi}(U)$. By the Brower's Open Mapping Theorem,
the image of $h$ is an open 2-dimensional subset, which shows the claim.
\end{proof}

To conclude the proof of the proposition we now apply the quotient flow to the section $\bar{\chi}(U)$ which is a topological surface. Indeed, by construction, each set of the form
$$
	\bigcup_{\lvert t \rvert <\tau}
	\tilde{\psi}_{t}\big(\bar{\chi}(U)\big)
$$
is continuously bijective to the product $(a,a') \times (b,b') \times (-\tau,\tau)$. By the Claim  this correspondence
is a homeomorphism and therefore $\bar{\chi}(U)$ is an open 3-dimensional set. This finishes the proof of the proposition.
\end{proof}

\begin{proof}[Proof of Theorem \ref{Hausdorff}.]
Proposition \ref{productstructure} implies that  each point in $\bar{X}$  has an open set that is continuously parametrized by an open subset of ${\mathbb R}^{3}$
which characterizes a topological 3-manifold. Hence, by \cite{kn:Bing,kn:Moise} the space
$\bar{X}$ has a smooth structure which is compatible with the quotient topology. Since the quotient
$X$ is locally homeomorphic to $\bar{X}$, the above assertions extend to $X$.
\end{proof}

\begin{remark}\label{rem:manifolds}
The fact that the quotient space $X$ is a smooth compact manifold is very important in many respects. Any smooth manifold admits a Riemannian metric and
hence there exists a distance $d\colon X \times X \longrightarrow {\mathbb R}$ which endows $X$ with a structure of a complete metric space. By the definitions of $\bar{X}$ and $X$, it is straightforward to see that the map
$$
	\bar{\Pi}\colon \bar{X} \longrightarrow X,\quad
	\bar{\Pi}(\bar{\chi }(\bar\theta)) = \chi (\bar\pi(\bar\theta))
$$ is a covering map, where $\bar\pi\colon T_{1}\tilde{M} \longrightarrow T_{1}M$ is the natural projection and $\chi\colon T_1M\to X$ and $\bar\chi\colon T_1\tilde M\to\bar X$ are the quotient maps inducing the quotient spaces.
The pullback $\bar{d}$ of $d$ to $\bar{X}$ by $\bar{\Pi}$ provides a structure of a complete metric space $(\bar{X},\bar{d})$ locally isometric to $(X,d)$. The metric $d$ is continuous and hence the quotient flow is continuous with respect to $d$.
We observe that the projection $\bar\Pi$ of the basis $\bar\chi(A_{(\cdot)}(\cdot))$ defined in Lemma~\ref{basis} naturally defines a basis in $X$.

Let ${\rm Isom}(\bar{X})$ be the group of isometries of $(\bar{X},\bar{d})$, which contains a representation $\Gamma$ of the fundamental group $\pi_{1}(X)$.
Notice that for every covering isometry $\beta$  in $T_{1}\tilde{M}$ the composition $\bar{\chi} \circ \beta$ induces a deck transformation $\bar{\beta} \colon \bar{X} \longrightarrow \bar{X}$ that satisfies $\bar{\chi} \circ \beta = \bar{\beta} \circ \bar{\chi}$ and is an element of $\Gamma$.
\end{remark}

\section{The dynamics of the quotient flow}\label{sec:dynquoflo}

We start this section by recalling some general definitions.
Given a general complete continuous flow $\psi_{t}\colon X \longrightarrow X$ acting on a complete metric space $(X,d)$, the \emph{strong stable set} $W^{\ss}(x)$ of a point $x \in X$ is the set of points $y \in X$ such that
$$
	\lim_{t \rightarrow +\infty}d(\psi_{t}(y),\psi_{t}(x)) =0.
$$
The \emph{strong unstable set} of a point $x \in X$ is defined to be the strong stable set of $x$ with respect to $\psi_{-t}$ and denoted by $W^{\uu}(x)$. The \emph{center stable set} $W^{\cs}(x)$ of a point $x\in X$ is the set of points $y\in X$ such that
$$ d(\psi_{t}(y),\psi_{t}(x)) \leq C $$
for some $C >0$ and every $t \geq 0$. The \emph{center unstable set}  is defined to be the center stable set of $x$ with respect to $\psi_{-t}$ and denoted by $W^{\cu}(x)$. For $x\in X$ and $\varepsilon>0$ let
\[\begin{split}
	W^{\cs}_\varepsilon(x)
	&\eqdef \{y\in W^{\cs}(x)\colon 
		d(\psi_t(y),\psi_t(x))\le\varepsilon \text{ for every }t\ge0\},\\
	W^{\cu}_\varepsilon(x)
	&\eqdef \{y\in W^{\cu}(x)\colon 
		d(\psi_{-t}(y),\psi_{-t}(x))\le\varepsilon \text{ for every }t\ge0\}\,.
\end{split}\]

The flow $\psi_t$ is said to have \emph{local product structure}%
\footnote{In other references such as, for example~\cite{Bow:72,Tho:91}, they  are sometimes called \emph{canonical coordinates}, which we avoid because of its similarity to geometric objects.}
 if for each sufficiently small $\varepsilon>0$ there is $\delta>0$ such that for every $x,y\in X$ with $d(x,y)\le\delta$ there is a unique $\tau=\tau(x,y)$ with $\lvert\tau\rvert\le\varepsilon$ satisfying $W^{\cs}_\varepsilon(\psi_\tau (x))\cap W^{\cu}_\varepsilon(y)\ne\emptyset$.

\bigskip
In the remainder of this section we consider  the quotient space $(X,d)$, the quotient map $\chi\colon T_1M\to X$, and the quotient flow $\psi_t\colon X\to X$ defined in Section~\ref{quotientstructure} and we describe the dynamical properties of this flow.
The following theorem is the main result of this section.

\begin{theorem} \label{expansivity}
The quotient flow
$\psi_{t} \colon X \longrightarrow X$ has the following properties:
\begin{enumerate}
\item [(i)] The flow is expansive.
\item [(ii)] For every $\theta\in T_1M$ the center stable set (center unstable set)  of $\chi(\theta)$ is the quotient of the center stable set (center unstable set) of $\theta$ with respect to the geodesic flow.
\item [(iii)] For every $\theta\in T_1M$   the strong stable set (strong unstable set) of $\chi(\theta) \in X$ is the quotient of  ${\mathscr F}^{\s}(\theta)$ (of ${\mathscr F}^{\u}(\theta)$).
\item [(iv)] If $(p,v) \neq (q,-w)$ then the center stable set of $[(q,w)]$ intersects the center unstable set of $[(p,v)]$ at a single orbit of $\psi_{t}$. In particular, the flow has a local product structure.
\item [(v)] The flow is topologically transitive.
\item [(vi)] Each strong stable set (strong unstable set)  is dense.
\item [(vii)] The flow is topologically mixing.
\end{enumerate}
\end{theorem}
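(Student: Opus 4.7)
The plan is to transfer dynamical properties from the geodesic flow $\phi_{t}$ on $T_{1}M$ to the quotient flow $\psi_{t}$ on $X$ by exploiting the time-preserving semi-conjugation $\chi$ (Proposition \ref{prop:semiconjugation}) and its universal-cover lift $\bar\chi\colon T_{1}\tilde M\to\bar X$, together with the Riemannian metric $d$ on $X$ of Remark \ref{rem:manifolds}. Throughout, I would lift configurations in $X$ to $\bar X$ via the covering $\bar\Pi$ and then to $T_{1}\tilde M$, where the Flat Strip Theorem \ref{flatstrip}, the shadowing Proposition \ref{newMorseshadowing}, and the visibility properties of Theorems \ref{central set} and \ref{transitivity} are available. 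The key metric-comparison fact comes from the explicit basis $\bar\chi(A_{\bar\theta}(\tau,\varepsilon,\delta,\dots))$ of Lemma \ref{basis}: preimages in $T_{1}\tilde M$ of small $d$-balls around $\bar\chi(\bar\theta)$ are contained in tubular neighborhoods of the class $\mathcal I(\bar\theta)$ whose Sasaki diameter is controlled by $\max(\tau,\varepsilon,\delta)$ plus the universally bounded strip width $Q$ from Lemma \ref{width}.

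For expansivity (i), I would fix a candidate constant $\varepsilon_{0}>0$ smaller than the injectivity radius of $\bar\Pi$ and suppose $d(\psi_{t}([\theta]),\psi_{\rho(t)}([\eta]))\le\varepsilon<\varepsilon_{0}$ for all $t\in\bR$. Choosing compatible lifts $\bar\theta,\bar\eta\in T_{1}\tilde M$ so that the corresponding orbits in $\bar X$ stay $\varepsilon$-close, the metric comparison above converts this into a uniform Sasaki bound between $\phi_{t}(\bar\theta)$ and $\phi_{\rho(t)}(\bar\eta)$, hence into bounded Hausdorff distance between the geodesics $\gamma_{\bar\theta}$ and $\gamma_{\bar\eta}$ in $\tilde M$. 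The Flat Strip Theorem \ref{flatstrip} then places both geodesics in a common flat strip, so $\bar\eta\in\mathcal I(\phi_{s}(\bar\theta))$ for some small $\lvert s\rvert$, which by the definition of $\sim$ says $[\eta]=\psi_{s}([\theta])$. The local product structure in (iv) would follow by combining (i) with Theorem \ref{transitivity} (iii): the heteroclinic intersection $\tilde{\mathscr F}^{\cs}(\bar\theta)\cap\tilde{\mathscr F}^{\cu}(\bar\eta)$ is nonempty and descends to a nonempty $W^{\cs}\cap W^{\cu}$ in $X$, and expansivity forces this intersection to be a single $\psi_{t}$-orbit, on which a unique small $\tau$ realizes the synchronizing condition.

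Items (ii) and (iii) describe the center and strong (un)stable sets of $\psi_{t}$ in terms of the horocyclic and center-horocyclic foliations of $\phi_{t}$. For (ii), one direction uses that the continuous surjection $\chi$ sends bounded $d_{S}$-orbits in $T_{1}M$ to bounded $d$-orbits in $X$, and the other direction lifts bounded $\bar d$-distance in $\bar X$ to a uniform Sasaki bound in $T_{1}\tilde M$ via the metric comparison, which combined with the equidistance of horospheres (Lemma \ref{Busemann} (iii)) puts the lifted points in a common $\tilde{\mathscr F}^{\cs}$-leaf. Item (iii) is the refinement to strict asymptoticity, which in surfaces without focal points characterizes membership in the horocycle $\mathscr F^{\s}$; passing to the quotient preserves this since $\sim$ identifies only bi-asymptotic orbits along a single horocyclic leaf. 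Items (v)--(vii) follow directly from $\chi$: topological transitivity and topological mixing descend along the continuous surjection from $\phi_{t}$ (Theorem \ref{transitivity} (ii)), and density of strong (un)stable sets in $X$ is the image under $\chi$ of minimality of $\mathscr F^{\s/\u}$ in $T_{1}M$ (Theorem \ref{transitivity} (i)), combined with the identification in (iii).

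The main obstacle I expect is the precise metric comparison underlying (i): turning quotient-metric closeness on $X$ into Hausdorff closeness of geodesics in $\tilde M$ requires that the arbitrary Riemannian metric $d$ on $X$ interacts controllably with the Sasaki geometry of $T_{1}\tilde M$. This is handled through the explicit basis of Lemma \ref{basis}, where small $\bar d$-balls are seen to correspond to open sets in $T_{1}\tilde M$ of Sasaki diameter controlled by the cross-section parameters together with the uniform strip-width bound $Q$ of Lemma \ref{width}. Once this comparison is in place, the remaining steps are either geometric (Flat Strip Theorem, visibility, heteroclinic intersections) or are direct transfers through the continuous semi-conjugation $\chi$.
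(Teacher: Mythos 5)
Your overall approach is the same as the paper's: convert $\bar d$-closeness of quotient orbits into Sasaki-closeness of the lifted geodesic-flow orbits in $T_1\tilde M$, then invoke the Flat Strip Theorem~\ref{flatstrip} to conclude that the two orbits lie in one strip and hence define the same $\bar\psi_t$-orbit; transfer (ii)--(iv) by means of the invariant foliations and the heteroclinic relation (Theorem~\ref{transitivity}(iii)); and descend (v)--(vii) directly along the continuous time-preserving surjection~$\chi$.

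The step you identify as ``the key metric-comparison fact'' is, however, not as immediate as you make it sound, and this is exactly where the paper does its real technical work. Your claim that preimages in $T_1\tilde M$ of small $d$-balls around $\bar\chi(\bar\theta)$ have Sasaki diameter ``controlled by $\max(\tau,\varepsilon,\delta)$ plus $Q$'' is not a direct consequence of Lemma~\ref{basis}: the basis elements $A_{\bar\theta}(\tau,\varepsilon,\delta,\ldots)$ and the range of admissible parameters depend on the base point~$\bar\theta$, and no uniformity over $\bar\theta$ (or over the time parameter $t$ along the orbits) is asserted in that lemma. The paper obtains the uniformity via the compact-exhaustion machinery of Lemmas~\ref{compactexhaustion} and~\ref{exhaustionX} (using $K_n=\chi(\cl B_{d_S}(\bar\theta,3Qn))$, the strip-width bound $Q$ from Lemma~\ref{width}, and the fact that $\bar\chi^{-1}$ of a compact set is compact — itself proved from the basis), combined with covering isometries $\beta_t$ of $T_1\tilde M$ that pull $\phi_t(\bar\theta)$ back to a fixed fundamental domain and whose induced deck transformations $\bar\beta_t$ act as isometries of $(\bar X,\bar d)$. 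Without that cocompactness argument, a pair of orbits could in principle stay $\bar d$-close in $\bar X$ while drifting apart in Sasaki distance when measured from a fixed base point. Once you fill this in (and note, as the paper does in Lemma~\ref{expansive-2}, that expansivity of $\bar\psi_t$ must then be pushed down to $\psi_t$ on the compact quotient $X$ using the injectivity radius of the covering $\bar\Pi$), the remaining items go through as you outline; items (ii)--(iii) in particular correspond to Lemmas~\ref{centraldistance}--\ref{heteroclinicquotient} of Subsection~\ref{sec:invsethomocl}, which again rely on the same compactness-of-$\bar\chi^{-1}(K)$ fact rather than a purely local estimate.
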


We shall prove Theorem \ref{expansivity} in several steps and complete its proof at the end of this section. In the forthcoming sections we shall give some interesting applications of Theorem \ref{expansivity} and we shall continue exploring the regularity of the quotient space $X$.

Theorem \ref{transitivity} asserts many density properties of dynamical objects associated to the geodesic flow of $(M,g)$. Since in the quotient topology an open set is a set whose pre-image under the quotient map is open in $T_{1}M$, all such properties are inherited by the quotient flow $\psi_{t}$ in a straightforward way.

\subsection{Expansiveness}

Intuitively, expansiveness is a property  we should expect since the quotient collapses strips which are the only ``obstructions'' to it. 
We will need the following basic result of the theory of metric spaces.

\begin{lemma} \label{compactexhaustion}
Let $Y$ be a smooth manifold which admits a complete metric space structure $(Y,D)$.
Suppose that there exists a sequence of compact sets $(K_{n})_{n\ge1}$ such that
\begin{enumerate}
\item [(i)] for every $n\ge1$ there exists an open neighborhood $V_{n}$ of $K_{n}$ contained in $K_{n+1}$,
\item [(ii)] $\bigcup_{n}K_{n} = Y$.
\end{enumerate}
Then for every $p \in Y$ and every sequence $(x_n)_{n\ge1}$ of points $x_{n} \in K_{n+1} \setminus K_{n}$, we have that
$D(p, x_{n}) \rightarrow +\infty$ if $n \rightarrow +\infty$.
\end{lemma}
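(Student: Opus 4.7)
The plan is to argue by contradiction. Suppose the conclusion fails, so there exist $p \in Y$ and a sequence $x_n \in K_{n+1} \setminus K_n$ with $\liminf_n D(p, x_n) < +\infty$. Passing to a subsequence we may assume $D(p, x_{n_k}) \le R$ for all $k$ and some $R > 0$, so the points $x_{n_k}$ all lie in the closed $D$-ball $\overline{B}_D(p, R)$.

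The crucial step is to extract a further convergent subsequence $x_{n_{k_j}} \to y \in Y$. In the framework where the present lemma will be applied (see Remark~\ref{rem:manifolds}), the distance $D$ is that of a complete Riemannian metric on the smooth manifold $Y$, so by the Hopf--Rinow theorem closed bounded balls are compact, and such a convergent subsequence is immediate. Once $y$ is found, hypothesis (ii) gives $y \in K_m$ for some $m$, and hypothesis (i) places $y$ in the \emph{open} set $V_m \subset K_{m+1}$. Since $x_{n_{k_j}} \to y$, we conclude $x_{n_{k_j}} \in V_m \subset K_{m+1}$ for all sufficiently large $j$.

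Finally, the nesting $K_n \subset V_n \subset K_{n+1}$ implies that the sequence $(K_n)$ is increasing, so for $n_{k_j} \ge m+1$ we have $K_{m+1} \subset K_{n_{k_j}}$, and hence $x_{n_{k_j}} \in K_{n_{k_j}}$. This contradicts the assumption $x_{n_{k_j}} \in K_{n_{k_j}+1} \setminus K_{n_{k_j}}$, completing the proof.

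The main point on which everything pivots is the precompactness of $D$-bounded sets used in the second paragraph. Completeness of a generic metric alone would not suffice (e.g.\ the bounded metric $\min(1,|x-y|)$ on $\mathbb{R}$ is complete and satisfies (i)--(ii) with $K_n=[-n,n]$ but defeats the conclusion). The implicit and decisive structural hypothesis, therefore, is that $D$ is the Riemannian distance provided in Remark~\ref{rem:manifolds}, which furnishes the Hopf--Rinow property and makes the rest of the argument an elementary extraction of a subsequence plus a nesting argument.
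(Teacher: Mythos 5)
Your argument follows essentially the same route as the paper's: argue by contradiction, extract a convergent subsequence $x_{n_k}\to q$ from a bounded subsequence, locate $q$ in some $K_m$ and hence in the open set $V_m\subset K_{m+1}$, and obtain a contradiction with $x_{n_k}\in K_{n_k+1}\setminus K_{n_k}$ for $n_k$ large. What you add --- and it is a genuine and worthwhile observation --- is the explicit recognition that this extraction step requires \emph{properness} of $(Y,D)$ (closed bounded sets compact), not merely completeness. The paper's own proof simply writes ``Since $(Y,D)$ is complete and the closed ball $\cl B(p,L)$ \dots\ is compact,'' treating compactness of the ball as automatic, but as your counterexample $\min(1,|x-y|)$ on $\bR$ with $K_n=[-n,n]$ correctly shows, completeness alone does not give this, and the lemma is false as literally stated. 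Your repair --- appealing to the fact that in the intended application (Remark~\ref{rem:manifolds}) the metric $\bar d$ on $\bar X$ is the Riemannian distance of a complete Riemannian metric, so Hopf--Rinow supplies properness --- is exactly the right way to close the gap, and the remainder of your nesting argument is correct and identical in spirit to the paper's.
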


\begin{proof}
Suppose, by contradiction, that there exists $L>0$ such that $d(p, x_{n}) \leq L$ for every $n \ge1$. Since $(Y,D)$ is complete and the closed ball $\cl B(p,L)$ of radius $L$ centered at $p$ is compact, the sequence $(x_{n})_n $ has a subsequence $(x_{n_k})_k$ converging to a point $q \in \cl B(p,L)$.
 By item (2), there exists
$m \in {\mathbb N}$ such that $q \in K_{m}$. By item (1), $q \in V_{m} \subset K_{m+1}$ where $V_{m}$ is an open set. So there is $k_0>0$ such that $x_{n_k} \in V_{m}$ for every $k \geq k_0$. This contradicts the choice of the sequence $(x_{n_k})_k$, since $x_{n_k} \in K_{n_k + 1} \setminus K_{n_k}$ for every $k$ and $V_{m} \subset K_{m+1} \subset K_k$ for every $k \geq m+1$.
\end{proof}

Now we show that $\bar{X}$ has a sequence of compact sets $K_{n}$ satisfying the assumptions of Lemma \ref{compactexhaustion}. 

\begin{lemma} \label{exhaustionX}
Given $\bar\theta\in T_{1}\tilde{M}$,  consider the family of balls
\[
	\cl B_{d_S}(\bar\theta,r) \eqdef \big\{ \bar\eta\in T_{1}\tilde{M}\colon
			d_S(\bar\theta,\bar\eta) \leq r\big\},
\]
where $d_{S}$ is the induced Sasaki distance. Let $Q>0$ be the constant provided by Proposition~\ref{newMorseshadowing}.
Then  the sequence of compact sets $(K_n)_{n\ge1}$ in the space $\bar{X}$ given by
\[
	K_n \eqdef \chi\big(\cl B_{d_S}(\bar\theta,3Qn)\big)
\]	
satisfies the hypotheses of Lemma \ref{compactexhaustion}.
\end{lemma}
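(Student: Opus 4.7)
The plan is to verify the hypotheses of Lemma~\ref{compactexhaustion} for $(K_n)_{n\ge 1}$ by checking three items: (a) each $K_n$ is compact; (b) $\bigcup_n K_n = \bar X$; and (c) $K_n \subset \operatorname{int}(K_{n+1})$, after which one may take $V_n := \operatorname{int}(K_{n+1})$. Claim (a) is immediate, since $(\tilde M, \tilde g)$ is complete as the universal cover of the compact manifold $M$, so $(T_1\tilde M, d_S)$ is complete and closed Sasaki balls are compact by Hopf--Rinow; as $\bar\chi$ is continuous, each $K_n$ is the continuous image of a compact set. Claim (b) is clear, since $\bigcup_n \cl B_{d_S}(\bar\theta, 3Qn) = T_1\tilde M$ and $\bar\chi$ is surjective.

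The substance is in (c), and the key geometric input is a Sasaki diameter bound for $\bar\chi$-equivalence classes: $\operatorname{diam}_{d_S}(\mathcal I(\bar\eta)) \le Q$ for every $\bar\eta \in T_1\tilde M$. Indeed, by the Flat Strip Theorem~\ref{flatstrip} and Lemma~\ref{width}, $\mathcal I(\bar\eta)$ lies in a flat strip isometrically and totally geodesically embedded in $\tilde M$ of width at most $Q$. Writing two vectors of $\mathcal I(\bar\eta)$ in strip coordinates as $((x_1, t_0), (0,1))$ and $((x_2, t_0), (0,1))$, the horizontal segment joining their base points together with the constant unit vector field $(0,1)$ along it describes a path in $T_1\tilde M$ of Sasaki length $|x_2 - x_1| \le Q$, because parallel transport in a totally geodesic flat submanifold agrees with ambient parallel transport and fixes $(0,1)$.

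With this bound in hand, given $x \in K_n$, I pick $\bar\eta \in \cl B_{d_S}(\bar\theta, 3Qn)$ with $\bar\chi(\bar\eta) = x$; the diameter bound then forces $\mathcal I(\bar\eta) \subset \cl B_{d_S}(\bar\theta, 3Qn + Q)$. By Lemma~\ref{basis}, there exists a basic open neighborhood $\bar\chi(A)$ of $x$ in $\bar X$ with $\bar\chi^{-1}(\bar\chi(A)) = A$, and by shrinking the parameters $\tau, \varepsilon, \delta, \bar\theta^\pm, \bar\eta^\pm$ defining $A$ one can make $A$ lie in an arbitrarily small Sasaki neighborhood of $\mathcal I(\bar\eta)$. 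In particular one arranges $A \subset B_{d_S}(\bar\theta, 3Qn + 2Q) \subset \cl B_{d_S}(\bar\theta, 3Q(n+1))$, which yields $\bar\chi(A) \subset K_{n+1}$ and hence $x \in \operatorname{int}(K_{n+1})$. The factor $3$ in the radius $3Qn$ provides precisely the $2Q$ of slack needed: one $Q$ absorbed by the diameter of the strip and another $Q$ absorbed by the ambient open neighborhood. The main (and essentially only) subtlety I anticipate is confirming that the basis elements of Lemma~\ref{basis} can indeed be made to lie in arbitrarily small Sasaki neighborhoods of $\mathcal I(\bar\eta)$, which however follows from their explicit construction in Section~\ref{subs1}, where the defining parameters shrink all dynamical directions (transverse, vertical, and flow-wise) down to the strip itself.
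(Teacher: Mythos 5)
Your proof is correct and follows essentially the same approach as the paper's: both establish the Sasaki-diameter bound $\le Q$ for equivalence classes via the flat strip theorem and Lemma~\ref{width}, and both use the basis elements of Lemma~\ref{basis} (which can be made arbitrarily close to a given class $\mathcal I(\bar\eta)$) to wedge a saturated open neighborhood of each $K_n$ inside $K_{n+1}$. Your write-up is somewhat more explicit than the paper's --- in particular you spell out, via strip coordinates and parallelism of the vector field, why the strip width controls the Sasaki diameter of $\mathcal I(\bar\eta)$, a step the paper asserts in one sentence --- but the decomposition of the argument and its key inputs are the same.
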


\begin{proof}
By Lemma \ref{width}, the width of a  strip  is bounded from above by  $Q$. So the
width of the equivalence class of every point in $T_{1}\tilde{M}$ is bounded from above by $Q$ since  flat strips in $\tilde{M}$ are isometric to strips of bi-asymptotic orbits of the geodesic flow of $T_{1}\tilde{M}$. Given $\bar\theta\in T_1\tilde M$, let $C_{r} \subset T_{1}\tilde{M}$ be the union of all classes of points in $\cl B_{d_S}(\bar\theta,r)$.
By the triangular inequality, $C_{r}$ is a compact connected set whose diameter is at most  $r +Q$. Thus, since the interior of $C_{r+3Q}$ contains the open ball of radius $r+3Q$ centered at $\bar\theta$, the set $C_{r+3Q}$ contains $C_{r}$.
Moreover, there exists a cover ${\mathcal U}_{r}$ of $C_{r}$ by open sets taken from the family  constructed in  Lemma~\ref{basis} which is contained in the interior of $C_{r+3Q}$. Indeed, we can cover any equivalence class with a set in this family which is arbitrarily close to the class.

Consider the sets
$$
	K_{n} \eqdef \chi(C_{3Qn})= \chi(\cl B_{d_S}(\bar\theta,3Qn))
$$
for $n \in {\mathbb N}$. Each such set is compact because it is the continuous image of a compact set. Clearly, $\bigcup_nC_{3Qn}$ covers all of  $T_{1}\tilde{M}$ and hence $\bigcup_nK_{n}$ covers $\bar{X}$.
By the choice of the radius $3Qn$, we have that $C_{3Qn} \subset C_{3(n+1)Q}$
for every $ n \in {\mathbb N}$, and hence $K_{n} \subset K_{n+1}$ for every integer $n$. This yields item (ii) of Lemma~\ref{compactexhaustion}.
Moreover,  by the previous paragraph the open neighborhood $\chi({\mathcal U}_{3nQ}) $ of $\chi(C_{3Qn}) = K_{n}$
is in the interior of $K_{n+1}$ for each integer $n$. This implies item (i) of Lemma~\ref{compactexhaustion}.
\end{proof}	

\begin{lemma} \label{expansive-1}
	The flow  $\bar{\psi}_{t}$ is expansive.
\end{lemma}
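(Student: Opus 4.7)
\emph{Overall strategy.} My plan is to reduce expansiveness of $\bar\psi_t$ on $\bar X$ to the Flat Strip Theorem (Theorem \ref{flatstrip}). The equivalence relation $\sim$ was designed precisely to collapse flat strips, so whenever two orbits of $\bar\psi_t$ shadow each other forever in $(\bar X,\bar d)$, I will lift them to orbits of the geodesic flow $\phi_t$ in $T_1\tilde M$ whose underlying geodesics are bi-asymptotic in $\tilde M$, must therefore bound a common flat strip by Theorem \ref{flatstrip}, and hence collapse to the same $\bar\psi_t$-orbit in $\bar X$.

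\emph{Key steps.} Fix an expansivity constant $\varepsilon_0>0$ to be chosen small relative to the strip-width bound $Q$ of Lemma \ref{width} and the local cross-section estimates of Lemma \ref{boundarycurve}. Given $\bar\zeta_1,\bar\zeta_2\in\bar X$ satisfying $\bar d(\bar\psi_t(\bar\zeta_1),\bar\psi_{\rho(t)}(\bar\zeta_2))\le\varepsilon_0$ for every $t\in\bR$ with continuous surjective $\rho$ and $\rho(0)=0$, pick any lift $\bar\theta_1\in T_1\tilde M$ of $\bar\zeta_1$. Because $\bar\Pi\colon\bar X\to X$ is a covering map (Remark \ref{rem:manifolds}) and any $\sim$-class has Sasaki diameter at most $Q$, for $\varepsilon_0$ small I can select a lift $\bar\theta_2$ of $\bar\zeta_2$ with $d_S(\bar\theta_1,\bar\theta_2)\le C_0(\varepsilon_0,Q)$. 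I then upgrade this into a uniform bound for all time: using the compact exhaustion $K_n=\bar\chi(\cl B_{d_S}(\bar\theta_1,3Qn))$ provided by Lemma \ref{exhaustionX} together with Lemma \ref{compactexhaustion}, I argue by contradiction that if $d_S(\phi_{t_n}(\bar\theta_1),\phi_{\rho(t_n)}(\bar\theta_2))\to\infty$ along some sequence $t_n$, then $\bar\chi(\phi_{\rho(t_n)}(\bar\theta_2))$ would eventually leave every $K_n$ centered at $\bar\chi(\phi_{t_n}(\bar\theta_1))$, forcing $\bar d$ between the two quotient orbits to blow up and contradicting the assumed shadowing. Consequently $d_S(\phi_t(\bar\theta_1),\phi_{\rho(t)}(\bar\theta_2))\le M$ for some $M$ independent of $t\in\bR$.

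\emph{Conclusion and main obstacle.} The uniform Sasaki bound forces the images of $\gamma_{\bar\theta_1}$ and $\gamma_{\bar\theta_2}$ to stay at bounded Hausdorff distance in $\tilde M$; a standard unit-speed argument, together with the fact that $\rho$ is continuous and surjective with $\rho(0)=0$, then yields a constant $c\in\bR$ for which $d(\gamma_{\bar\theta_1}(t),\gamma_{\bar\theta_2}(t+c))$ is bounded in $t$. The Flat Strip Theorem (Theorem \ref{flatstrip}) now places both geodesics in a common flat strip, so there is $t(y)\in\bR$ with $\phi_{t(y)}(\bar\theta_1)\sim\bar\theta_2$, and projecting under $\bar\chi$ gives $\bar\psi_{t(y)}(\bar\zeta_1)=\bar\zeta_2$. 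The bound $|t(y)|<\varepsilon_0$ follows by choosing $\varepsilon_0$ small so that the local cross-section structure in Lemma \ref{boundarycurve} distinguishes points of $\bar\psi_t$-orbits whose time difference exceeds $\varepsilon_0$. I expect the main technical obstacle to be the transition from shadowing in the abstract metric $\bar d$ (the pullback of an essentially arbitrary Riemannian metric on the compact 3-manifold $X$) to shadowing in the Sasaki metric on $T_1\tilde M$; the compact exhaustion of Lemma \ref{exhaustionX} is precisely what is available to control this, but one must also be careful that $\rho$ need not be monotone, which forces the bi-asymptoticity comparison to be carried out at the level of geodesic images rather than along a naively matched parametrization.
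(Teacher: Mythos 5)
Your proposal follows essentially the same route as the paper: reduce to a uniform Sasaki-distance bound between the lifted orbits via the compact-exhaustion machinery of Lemmas~\ref{compactexhaustion} and \ref{exhaustionX}, then invoke the Flat Strip Theorem to conclude bi-asymptoticity and hence equality of strips/quotient orbits. One detail to be careful about: the exhaustion $K_n=\bar\chi(\cl B_{d_S}(\bar\theta_1,3Qn))$ of Lemma~\ref{exhaustionX} is centered at the \emph{fixed} point $\bar\theta_1$, whereas your argument needs an exhaustion ``centered at $\bar\chi(\phi_{t_n}(\bar\theta_1))$,'' which is a moving point drifting off to infinity in $\bar X$. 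The paper's proof makes the recentering rigorous by introducing covering isometries $\beta_{t}$ with $\beta_t(\phi_t(\bar\theta))\in\cD$ (a fixed fundamental domain) and then using that each induced deck transformation $\bar\beta_t$ is a $\bar d$-isometry of $\bar X$ (Remark~\ref{rem:manifolds}), so that the distance blow-up is detected against the fixed exhaustion. Your write-up alludes to the covering map $\bar\Pi\colon\bar X\to X$ but does not explicitly put the deck transformations to work; without them the ``centered at the moving point'' phrase does not literally connect to Lemma~\ref{compactexhaustion}. Once that is supplied, the argument coincides with the paper's.
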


\begin{proof}
By contradiction, suppose that that the flow is not expansive, that is, that there exist two points $[\bar{\theta}],[\bar{\eta}]\in \bar{X}$ whose orbits have  bounded Hausdorff distance, say, bounded by $L>0$.
Up to some reparameterization we can assume that $\bar{d}([ \bar\theta], [\bar\eta]) \leq L$ and that there is some increasing homeomorphism $\rho\colon\bR\to\bR$ satisfying $\rho(0)=0$ such that for every $t\in\mathbb R$ we have
$$
	\bar{d}\big(\bar{\psi}_{t}([\bar\theta]),
			\bar{\psi}_{\rho(t)}([\bar\eta])\big)
	\leq L\,.
$$
We need the following intermediate result.

\begin{claim}
	There exists $\bar{L}>0$ such that for every $t\in\mathbb R$ we have
	\[
	 	d_{S}(\phi_{t}(\bar\theta), \phi_{\rho(t)}(\bar\eta)) \leq \bar{L},
	\]	
	where $d_{S}$ is the Sasaki distance.
	\end{claim}

\begin{proof}
Fix some fundamental domain $\cD$ of $T_1\tilde M$ containing $\bar\theta$, and let $D$ denote its diameter. Given $t\in\bR$, let $\beta_{t} \colon T_{1}\tilde{M} \longrightarrow T_{1}\tilde{M}$ be a covering isometry such that $\beta_{t}(\phi_{t}(\bar{\theta})) \in \cD$.
Hence, for every $t\in\bR$ we have
\[
	 d_{S}(\bar\theta, \beta_{t}(\phi_{t}(\bar{\theta}))) \leq D.
\]	 

As in Lemma~\ref{exhaustionX} and its proof, let $C_r \subset T_{1}\tilde{M}$ be the union of all classes of points in $\cl B_{d_S}(\bar\theta,r)$ and let $K_n:=\chi(C_{3Qn})$.

By contradiction, suppose that there exists some sequence $(t_n)_n$ such that the infimum of the Sasaki distance $d_{S}$ between $\phi_{t_{n}}(\bar\theta)$ and $\{\phi_{\tau}(\bar\eta)\colon \tau \in {\mathbb R}\}$,
is attained at $\tau=\tau_n$ satisfying
$$
	d_{S}(\phi_{t_{n}}(\bar\theta), \phi_{\tau_n}(\bar\eta))
	\geq 3Qn +D+Q \,.
$$
This would imply that $\beta_{t_{n}}(\phi_{\tau}(\bar\eta)) \notin C_{3Qn}$ for every $\tau \in {\mathbb R}$ by the definition of $C_{3Qn}$. Taking the quotient we would get
\[
	(\bar\chi\circ \beta_{t_{n}})(\phi_{\tau}(\bar\eta)) \notin K_{n}
\]
for every $\tau \in {\mathbb R}$ and every $n\ge1$. 
By Lemma \ref{compactexhaustion} we would get that for each $\tau \in {\mathbb R}$
\[
	\bar d\big(\bar\chi(\bar\theta),(\bar\chi\circ \beta_{t_{n}})(\phi_{\tau}(\bar\eta))\big)\to\infty 
\] as $n\to\infty$.
But on the other hand we would have $(\bar\chi\circ \beta_{t_{n}})(\phi_{t_{n}}(\bar\theta))\in \bar\chi(\cD)$ for every $n\ge1$, which together would imply that for every $\tau \in {\mathbb R}$
$$
	\bar{d}\big( (\bar\chi\circ \beta_{t_{n}}\circ\phi_{t_{n}})(\bar\theta),
			(\bar\chi\circ \beta_{t_{n}}\circ\phi_{\tau})(\bar\eta)\big)
	\rightarrow \infty
$$
if $n \rightarrow \infty$. Recall that by Remark~\ref{rem:manifolds}, each $\beta_t$ induces a deck transformation $\bar \beta_t$ which satisfies $\bar\chi\circ\beta_t={\bar\beta}_t\circ\bar\chi$. Hence  for every $\tau \in {\mathbb R}$
\[
	\bar{d}\big( ({\bar\beta}_{t_n}\circ\bar\chi\circ\phi_{t_{n}})(\bar\theta),
			({\bar\beta}_{t_n}\circ\bar\chi\circ\phi_{\tau})(\bar\eta)\big)
	= \bar d\big( {\bar\beta}_{t_n}([\phi_{t_n}(\bar\theta)]),{\bar\beta}_{t_n}([\phi_\tau(\bar\eta)]) \big)
	\rightarrow \infty
\]
Since each ${\bar\beta}_{t_n}$ is an isometry with respect to $\bar d$,  this together with the definition~ of the quotient flow $\bar\psi$ would imply		
\[
	\sup_{\tau\in\bR}\bar d\big( [\phi_{t_n}(\bar\theta)],[\phi_\tau(\bar\eta)] \big)
	=	\sup_{\tau\in\bR}\bar d\big( \bar\psi_{t_n}([\bar\theta]),\bar\psi_\tau([\bar\eta]) \big)
	\to\infty
\]
But the latter term is bounded from above by $L$ for all $n$. This contradiction shows that such a sequence $(t_{n})_n$ does not exists. This proves the claim.
\end{proof}

Let us consider the orbits of $\bar\theta$ and $\bar\eta$ under the flow $\phi_t$
in $T_{1}\tilde{M}$.
Observe that the Claim implies that the strips $F(\bar\theta)$ and $F(\bar\eta)$ are within a distance
of $\bar{L} + \bar{Q}$, where $\bar{Q}$ depends on the constant  in Proposition \ref{newMorseshadowing}. Thus, taking the canonical projection
from $T_{1}\tilde{M}$ onto $\tilde{M}$ we get that the strips of the geodesics $\gamma_{\bar\theta}$ and $\gamma_{\bar\eta}$
are within a distance $\bar{L} + \bar{Q}$ (recall that the canonical projection is a Riemannian submersion). This can only happen when
both geodesics are bi-asymptotic and hence  are in the same strip. So $F(\bar\theta)= F(\bar\eta)$ and therefore their quotients  in $\bar{X}$ coincide.

This implies that the the quotient flow $\bar{\psi}_{t}$ is expansive.
\end{proof}

\begin{lemma}\label{expansive-2}
	The flow $\psi_t$ is expansive.
\end{lemma}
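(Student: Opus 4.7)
The plan is to deduce the expansivity of $\psi_t$ on the compact quotient $X$ from the expansivity of $\bar{\psi}_t$ on the cover $\bar{X}$, already established in Lemma~\ref{expansive-1}, by transferring it through the covering map $\bar{\Pi}\colon\bar{X}\to X$ described in Remark~\ref{rem:manifolds}. Since $X$ is a compact smooth manifold with Riemannian metric $d$ and $\bar d$ is the pullback of $d$ by the covering, the map $\bar{\Pi}$ is a local isometry; combining this with the covering property and compactness of $X$, a Lebesgue-number argument yields a uniform ``injectivity radius'' $r>0$ such that $\bar{\Pi}$ restricts to an isometry from every $\bar d$-ball of radius $r$ in $\bar X$ onto the corresponding $d$-ball in $X$. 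The flows are intertwined by $\psi_t\circ\bar{\Pi}=\bar{\Pi}\circ\bar{\psi}_t$, so the continuous lift of any $\psi_t$-orbit to $\bar X$ starting at a prescribed preimage is a $\bar\psi_t$-orbit.

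Let $\bar\varepsilon$ denote the expansivity constant of $\bar\psi_t$. Shrinking if necessary, I would assume $\bar\varepsilon\le r$ and, additionally, $\bar\varepsilon<T_0/2$, where $T_0>0$ is a uniform lower bound on periods of periodic orbits of $\psi_t$ (such a bound exists by the standard compactness argument: $\psi_t$ is a continuous flow without fixed points on compact $X$, since no equivalence class $[\theta]$ can satisfy $[\phi_t(\theta)]=[\theta]$ for all $t$). I claim this $\bar\varepsilon$ is an expansivity constant for $\psi_t$. Take $x,y\in X$ and a continuous surjective $\rho\colon\bR\to\bR$ with $\rho(0)=0$ such that $d(\psi_t(x),\psi_{\rho(t)}(y))\le\bar\varepsilon$ for every $t\in\bR$. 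Fix a lift $\bar x$ of $x$; since $d(x,y)\le\bar\varepsilon\le r$, there is a unique $\bar y\in B_{\bar d}(\bar x,r)$ projecting to $y$, with $\bar d(\bar x,\bar y)=d(x,y)$.

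A standard connectedness argument on the set $S:=\{t\in\bR:\bar d(\bar\psi_t(\bar x),\bar\psi_{\rho(t)}(\bar y))\le\bar\varepsilon\}$ then yields $S=\bR$: it is non-empty ($0\in S$), closed by continuity, and open because at any $t_0\in S$ the local isometry $\bar{\Pi}$ identifies $\bar d$-distances with $d$-distances throughout the $r$-neighborhood of $\bar\psi_{t_0}(\bar x)$, and uniqueness of continuous lifts combined with the intertwining of flows forces the locally lifted orbit of $\psi_{\rho(\cdot)}(y)$ to coincide with $\bar\psi_{\rho(\cdot)}(\bar y)$ nearby. Hence $\bar d(\bar\psi_t(\bar x),\bar\psi_{\rho(t)}(\bar y))\le\bar\varepsilon$ for every $t$, and expansivity of $\bar\psi_t$ gives $\bar y=\bar\psi_{t_0}(\bar x)$ for some $|t_0|<\bar\varepsilon$. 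Projecting by $\bar{\Pi}$ yields $y=\psi_{t_0}(x)$, and the constraint $\bar\varepsilon<T_0/2$ ensures that $t_0$ is already the unique period-representative of smallest absolute value, so $|t_0|<\bar\varepsilon$ as required by the definition of expansivity. The main obstacle is precisely the connectedness step, which is what converts the local nature of the isometry $\bar{\Pi}$ into the global orbit-closeness in $\bar X$ needed to invoke Lemma~\ref{expansive-1}; once it is in place, the rest of the argument is routine.
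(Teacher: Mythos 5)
Your proof is correct and follows essentially the same approach as the paper's: lift to the covering space $\bar X$ using the uniform injectivity radius supplied by compactness of $X$, then invoke Lemma~\ref{expansive-1}. Your connectedness argument usefully spells out the lifting step that the paper leaves implicit ("we could lift these orbits to a pair of orbits in $\bar X$ which would stay within a distance $r$ from each other"), while the digression about the lower bound $T_0$ on periods is correct but unnecessary, since the expansivity definition only asks for \emph{some} $t(y)$ with $\lvert t(y)\rvert<\varepsilon$, which the lifted conclusion $\bar y=\bar\psi_{t_0}(\bar x)$ already supplies upon projection.
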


\begin{proof}
Note that by Remark~\ref{rem:manifolds} $\bar{X}$ covers $X$. Thus, by compactness of $X$, there exists $r>0$ such that the covering map restricted to any ball of radius $r$ is a homeomorphism.

Arguing by contradiction, suppose  that $\psi_t$ would not be expansive, that is,  suppose that there are  two points $[\theta],[\eta]\in X$ with distinct orbits and that there exists a homeomorphism $\rho\colon\bR\to\bR$ satisfying $\rho(0)=0$ and such that for every $t \in {\mathbb R}$ we have
$$
	d\big(\psi_{t}([\theta]), \psi_{\rho(t)}([\eta])\big) \leq r.
$$
Then we could lift these orbits to a pair of orbits in $\bar{X}$ which would stay within a distance $r$ from each other. Therefore, by Lemma~\ref{expansive-1}, these orbits would coincide. Thus, the orbits of $[\theta]$ and $[\eta]$ would coincide, which gives a contradiction.
\end{proof}

\subsection{Invariant sets and heteroclinic relation}\label{sec:invsethomocl}

We will consider the following invariant sets of the quotient flows $\bar{\psi}_{t}$ and $\psi_{t}$. Recall the definition of the projection map $\bar\chi\colon T_1\tilde M\to \bar X$ in Section~\ref{quotientstructure}.
Given $\theta \in T_{1}M$ and one of its lifts $\bar{\theta} \in T_1\tilde{M}$, let
\[\begin{split}
	&\tilde W^\ast([\bar\theta]):=\bar\chi\big(\tilde {\mathscr F}^\ast(\bar\theta)\big)\,,
	\quad
	W^\ast([\theta]):=\chi\big( {\mathscr F}^\ast(\theta)\big),\quad
	\ast=\cs,\cu,\\
	&\tilde W^{\ss}([\bar\theta]):=\bar\chi\big(\tilde {\mathscr F}^\s(\bar\theta)\big)\,,
	\quad
	W^{\ss}([\theta]):=\chi\big( {\mathscr F}^\s(\theta)\big),\quad
	\\
	&\tilde W^{\uu}([\bar\theta]):=\bar\chi\big(\tilde {\mathscr F}^\u(\bar\theta)\big)\,,
	\quad
	W^{\uu}([\theta]):=\chi\big( {\mathscr F}^\u(\theta)\big)
	\,.
\end{split}\]
In this way, $X$ is the union of the sets $W^{\cs}([\theta])$, $[\theta] \in X$, which strongly indicates that $X$ should be  foliated by the above collection of sets.
Analogously, $X$ is the union of the sets $W^{\ast}([\theta])$, $[\theta] \in X$,
for $\ast=\ss,\cu,\uu$ respectively.
In this section we will study the dynamical properties of these sets and will establish that indeed each set $W^{\ss}([\theta])$ is a strong stable set of the quotient flow as defined above,  justifying our notation.

\begin{lemma} \label{centraldistance}
Given $D>0$ there exists $D'>0$ such that for every $\bar{\theta} \in T_{1}\tilde{M}$, and every pair of points $[\bar{\tau}], [\bar{\eta}]\in \tilde {\mathscr F}^\s(\bar\theta)$
such that $\bar{d}([\bar{\tau}], [\bar{\eta}]) \leq D$, we have that
$$
	\bar{d}\big(\bar{\psi}_{t}([\bar{\tau}]), \bar{\psi}_{t}([\bar{\eta}]) \big) \leq D'
$$
for every $t \geq 0$.
\end{lemma}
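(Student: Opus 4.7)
The plan is to translate the $\bar d$-hypothesis into a Sasaki-distance bound in $T_{1}\tilde M$, propagate it forward in time using the no-focal-points geometry of horocycles, and transfer the resulting bound back to $\bar d$. Two auxiliary facts will be used throughout: strips have diameter at most $Q$ (Lemma~\ref{width}), and stable leaves are unions of equivalence classes (by Theorem~\ref{central set}(i) and Lemma~\ref{semicontinuity}(i)), so that any representative of the class of a point in $\tilde{\mathscr F}^\s(\bar\theta)$ remains in the same stable leaf.

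First I would produce lifts $\bar\tau'\in[\bar\tau]$ and $\bar\eta'\in[\bar\eta]$ in $T_{1}\tilde M$ that are close in the Sasaki metric. Using that $\bar\Pi\colon\bar X\to X$ is a covering of the compact space $(X,d)$ (Remark~\ref{rem:manifolds}) and that $\chi\colon T_{1}M\to X$ is a continuous surjection between compact spaces, the uniform continuity of $\chi$ combined with the bounded-diameter property of strips yields a constant $D_0=D_0(D)$ and representatives with $d_S(\bar\tau',\bar\eta')\le D_0$. By the preceding observation, $\bar\tau',\bar\eta'\in\tilde{\mathscr F}^\s(\bar\theta)$.

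Next, write $\bar\tau'=(q_{\tau'},-\nabla_{q_{\tau'}}b^+_{\bar\theta})$ and similarly $\bar\eta'=(q_{\eta'},-\nabla_{q_{\eta'}}b^+_{\bar\theta})$, with $q_{\tau'},q_{\eta'}\in H^+(\bar\theta)$, so that $d_{\tilde g}(q_{\tau'},q_{\eta'})\le d_S(\bar\tau',\bar\eta')\le D_0$. By Lemma~\ref{Busemann}(iii), the base points of $\phi_t(\bar\tau')$ and $\phi_t(\bar\eta')$ lie on $H^+(\gamma_{\bar\theta}(t))$ and the underlying geodesics are positively asymptotic. In the absence of focal points, stable Jacobi fields have non-increasing norm in forward time, hence $d_{\tilde g}(\gamma_{\bar\tau'}(t),\gamma_{\bar\eta'}(t))\le D_0$ for all $t\ge 0$. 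Since $\phi_t(\bar\tau')$ and $\phi_t(\bar\eta')$ are values of the $K$-Lipschitz unit vector field $-\nabla b^+_{\gamma_{\bar\theta}(t)}$ (Lemma~\ref{Busemann}(ii)), integrating along a minimizing $\tilde g$-geodesic joining the base points yields
\[
d_S\bigl(\phi_t(\bar\tau'),\phi_t(\bar\eta')\bigr)\le (1+K)\,D_0=:D_1\quad\text{for all }t\ge 0.
\]

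Finally, uniform continuity of $\chi$ together with the local isometry of the covering $\bar\Pi$ gives, via a chaining argument along a Sasaki-geodesic, a function $\omega$ with $\bar d(\bar\chi(\bar\alpha),\bar\chi(\bar\beta))\le\omega(d_S(\bar\alpha,\bar\beta))$ on bounded scales. Applied to $\phi_t(\bar\tau')$, $\phi_t(\bar\eta')$ and combined with the equivariance $\bar\chi\circ\phi_t=\bar\psi_t\circ\bar\chi$ (the lift of Proposition~\ref{prop:semiconjugation}), this gives $\bar d(\bar\psi_t([\bar\tau]),\bar\psi_t([\bar\eta]))\le\omega(D_1)=:D'$, as required. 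The hard part will be the change of metric in the first step: producing nearby Sasaki representatives from $\bar d$-closeness relies crucially on the bounded-strip width and the covering structure of Remark~\ref{rem:manifolds}, while a secondary subtlety is the monotonicity of stable Jacobi field norms in the no-focal-points setting, which is weaker than the strict convexity used in nonpositive curvature.
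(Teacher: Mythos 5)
Your strategy --- convert $\bar d$-closeness to a Sasaki bound, propagate in forward time, convert back --- is the same as the paper's, and your middle step is in fact a bit more careful than the paper's: the paper simply asserts that the distance between the stable orbits is non-increasing and concludes a bound on $d_S$, whereas you correctly separate the base-distance monotonicity (the genuine no-focal-points input) from the vertical component of the Sasaki metric, controlling the latter via the Lipschitz constant $K$ of $\nabla b^\pm$ and obtaining the factor $(1+K)$.

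The gap is your first step, and the remark that it is the hard part does not close it. ``Uniform continuity of $\chi$'' gives the wrong implication ($d_S$-close $\Rightarrow$ $d$-close); what you need is that $\bar d([\bar\tau],[\bar\eta])\le D$ forces a uniform $d_S$-bound on the given representatives, and this does not follow from bounded strip width and the covering structure alone. The essential missing ingredient is that $\bar\chi$ is \emph{proper}: preimages under $\bar\chi$ of compact subsets of $\bar X$ are compact in $T_1\tilde M$. This is genuinely nontrivial for a quotient map and is precisely what the paper isolates as an intermediate Claim, proved using the basis $\{\bar\chi(A_{(\cdot)}(\cdot))\}$ from Lemma~\ref{basis}: each basic set satisfies $\bar\chi^{-1}(\bar\chi(A))=A$ with $A$ of compact closure, so a finite subcover gives compactness of the preimage; uniformity in $\bar\theta$ then follows from co-compactness and the equivariance $\bar\chi\circ\beta=\bar\beta\circ\bar\chi$. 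Your third step has a related issue: the chaining argument needs matching covering sheets at every link, which loops back to the same equivariance; the paper instead brings the orbit into a fixed fundamental domain via a covering isometry $\beta_t$, uses compactness of $\bar\chi$ of a bounded neighborhood of that domain, and transfers back via the $\bar d$-isometry $\bar\beta_t$, which is shorter and avoids the modulus-of-continuity bookkeeping. I'd recommend replacing both metric-conversion steps by this properness-plus-equivariance scheme.
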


\begin{proof}
Given $\bar\theta\in T_1\bar M$, let  $\bar{\tau},\bar{\eta}\in{\tilde {\mathscr  F}}^{\s}(\bar{\theta})$ as in the hypothesis and note that the distance between their orbits is non-increasing since we consider a surface $(M,g)$ without focal points. 

\begin{claim}
	If $K \subset \bar{X}$ is compact then $\bar{\chi}^{-1}(K)\subset T_{1}\tilde{M}$ is compact.
\end{claim}

\begin{proof}

We know that $\bar{\chi}^{-1}(K)$ is closed by continuity of $\bar{\chi}$.
By Lemma \ref{basis} the sets $\bar{\chi}(A)$ for $A=A_{\bar{\theta}}(\tau,\varepsilon, \delta,\bar{\theta}^{-},\bar{\theta}^{+},\bar\eta^-,\bar\eta^+)$ 
as in this lemma  form a basis for the quotient topology. Here each set of the basis satisfies
\[
 	\bar{\chi}^{-1}\big(\bar{\chi}(A) \big)	
	= A.
\]	
Each set $A$ is an open subset with compact closure. Since $K$ is compact, it can be covered by a finite
collection $\{\bar{\chi}(A_{i})\}_{i=1,\ldots,m}$ of sets in this basis. By Lemma \ref{basis}, this implies that $\bar{\chi}^{-1}(K)$ is covered by the finite collection $A_{i}$ of open
sets. Since the union of the closures of the sets $A_{i}$ is a compact set, $\bar{\chi}^{-1}(K)$ is a closed subset of a compact set. Thus $\bar{\chi}^{-1}(K)$ is compact.
\end{proof}

Given $D>0$, let $L=L(D)>0$ be such that for every pair of points $\bar\theta_1, \bar\theta_2 \in T_{1}\tilde{M}$
with $\bar{d}( [\bar\theta_1],[\bar\theta_2]) \leq D$ we have $d_{S}(\bar\theta_1,\bar\theta_2) \leq L$. Indeed, such  constant $L$ exists by the above Claim, since the preimage by
$\bar{\chi}$ of the closed ball of radius $D$ centered at $[\bar\theta_1]$ is a compact subset of $T_{1}\tilde{M}$. 

So for $D := \bar{d}([\bar{\tau}], [\bar{\eta}])$
we have $d_{S}(\bar{\tau}, \bar{\eta}) \leq L$ and hence
$$ 
	d_{S}(\phi_{t}(\bar{\tau}), \phi_{t}(\bar{\eta})) \leq L
$$
for every $ t \geq 0$. By co-compactness of $T_{1}\tilde{M}$, given a fundamental domain $\cD$, there exist representatives $\beta_{t}(\phi_{t}(\bar{\tau})), \beta_{t}(\phi_{t}(\bar{\eta}))$ of $\bar{\tau}, \bar{\eta}$ by isometries $\beta_{t}$ of the fundamental group of $T_{1}M$ such that
\begin{itemize}
\item $\beta_{t}(\phi_{t}(\bar{\tau})) \in \cD$ for each $t >0$,\\[-0.4cm]
\item $d_{S}\big(\bar{\tau}, \bar{\eta}\big) = d_{S}\big(\beta_{t}(\phi_{t}(\bar{\tau})), \beta_{t}(\phi_{t}(\bar{\eta}))\big)$ for every $t >0$.
\end{itemize}

Since the image by $\bar{\chi}$ of a compact set is compact, there exists a constant $D'= D'(D)$ such that
$$
	\bar{d}\big( [\beta_{t}(\phi_{t}(\bar{\tau}))],[ \beta_{t}(\phi_{t}(\bar{\eta}))]\big) \leq D' .
$$
But by definition, for each covering isometry $\beta$ acting on $T_{1}\tilde{M}$ we have an induced isometry $\bar{\beta}$ of $\bar{d}$
which acts as
$$
	\bar{\beta}([\bar{\theta}])
	= (\bar{\beta}\circ \bar{\chi})(\bar{\eta}) =( \bar{\chi} \circ \beta)(\bar{\eta}) =  [\beta(\bar{\theta})].
$$
So we get
$$
	\bar{d}\big( [\beta_{t}(\phi_{t}(\bar{\tau}))],[ \beta_{t}(\phi_{t}(\bar{\eta}))]\big)
	= \bar{d}\big(\bar{\psi}_{t}([\bar{\tau}]), \bar{\psi}_{t}([\bar{\eta}])\big)
	\leq D'
$$
for every $t \geq 0$,  concluding the proof.
\end{proof}

\begin{lemma} 
The quotient flow $\bar{\psi}_{t}$ is uniformly contracting on the stable sets $\tilde{W}^{\ss}([\bar{\theta}])$ in the following sense:
for every $D,\varepsilon >0$, there exists $t_0=t_0(D, \varepsilon) >0$ such that for every $\bar{\theta} \in T_{1}\tilde{M}$ and every two points $[\bar{\eta}], [\bar{\xi}] \in \tilde{W}^{\ss}([\bar{\theta}])$ satisfying $\bar{d}([\bar{\eta}], [\bar{\xi}] ) \leq D$
we have
$$ \bar{d}\big(\bar{\psi}_{t}([\bar{\xi}]), \bar{\psi}_{t}([\bar{\eta}])\big) \leq \varepsilon$$
for every $t \geq t_0$.
\end{lemma}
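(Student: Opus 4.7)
The plan is to argue by contradiction via the Flat Strip Theorem~\ref{flatstrip}. Suppose the conclusion fails: there exist $D,\varepsilon_0>0$, points $\bar\theta_n\in T_1\tilde M$, classes $[\bar\eta_n],[\bar\xi_n]\in\tilde W^{\ss}([\bar\theta_n])$ satisfying $\bar d([\bar\eta_n],[\bar\xi_n])\le D$, and times $t_n\to+\infty$ such that
\[
	\bar d\big(\bar\psi_{t_n}([\bar\eta_n]),\bar\psi_{t_n}([\bar\xi_n])\big)\ge\varepsilon_0.
\]
From these data I would produce, in the limit, two bi-asymptotic geodesics in $\tilde M$ whose classes in $\bar X$ coincide while being at $\bar d$-distance at least $\varepsilon_0$, a contradiction.

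First I would lift each pair to representatives on a common stable leaf in $T_1\tilde M$. Choosing a lift $\bar\theta_n^\ast$ of $[\bar\theta_n]$, pick $\bar\eta_n^\ast,\bar\xi_n^\ast\in\tilde{\mathscr F}^\s(\bar\theta_n^\ast)$ in the given classes. The Claim in the proof of Lemma~\ref{centraldistance} (preimages of compact sets under $\bar\chi$ are compact), together with cocompactness of the $\pi_1(M)$-action, allows such lifts to be chosen with $d_S(\bar\eta_n^\ast,\bar\xi_n^\ast)\le L=L(D)$ uniformly in $n$. Since $(M,g)$ has no focal points, the distance $d_{\tilde g}(\gamma_{\bar\eta_n^\ast}(t),\gamma_{\bar\xi_n^\ast}(t))$ between geodesics asymptotic to a common Busemann function is non-increasing in $t\ge 0$; combined with the Lipschitz control on horocycle gradients from Lemma~\ref{Busemann}(ii), this yields a uniform Sasaki estimate $d_S(\phi_t\bar\eta_n^\ast,\phi_t\bar\xi_n^\ast)\le L'$ for every $t\ge 0$.

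Next I would extract a limit. Let $\cD\subset T_1\tilde M$ be a fundamental domain and choose covering isometries $\beta_n$ with $\beta_n\phi_{t_n}(\bar\eta_n^\ast)\in\cD$. Since $\beta_n$ preserves $d_S$, along a subsequence $\beta_n\phi_{t_n}(\bar\eta_n^\ast)\to\bar\eta^\ast$ and $\beta_n\phi_{t_n}(\bar\xi_n^\ast)\to\bar\xi^\ast$ with $d_S(\bar\eta^\ast,\bar\xi^\ast)\le L'$. By continuity of the stable foliation (Theorem~\ref{central set}), $\bar\eta^\ast$ and $\bar\xi^\ast$ lie on a common stable leaf. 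For each fixed $s\in\bR$ one has $s+t_n\ge 0$ eventually, and applying the bound of the previous step at time $s+t_n$ and passing to the limit gives $d_S(\phi_s\bar\eta^\ast,\phi_s\bar\xi^\ast)\le L'$. Projecting to $\tilde M$, the geodesics $\gamma_{\bar\eta^\ast}$ and $\gamma_{\bar\xi^\ast}$ remain within Hausdorff distance $L'$ for all time, hence are bi-asymptotic.

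The Flat Strip Theorem~\ref{flatstrip} then places these two geodesics in a common flat strip; hence $\bar\xi^\ast\in F(\bar\eta^\ast)$, and by Definition~\ref{def:quotientflow} we have $[\bar\eta^\ast]=[\bar\xi^\ast]$ in $\bar X$. On the other hand, by Remark~\ref{rem:manifolds} each induced $\bar\beta_n$ is a $\bar d$-isometry satisfying $\bar\chi\circ\beta_n=\bar\beta_n\circ\bar\chi$, so
\[
	\bar d\big(\bar\chi(\beta_n\phi_{t_n}\bar\eta_n^\ast),\bar\chi(\beta_n\phi_{t_n}\bar\xi_n^\ast)\big)=\bar d\big(\bar\psi_{t_n}([\bar\eta_n]),\bar\psi_{t_n}([\bar\xi_n])\big)\ge\varepsilon_0,
\]
and continuity of $\bar\chi$ and $\bar d$ yields $\bar d([\bar\eta^\ast],[\bar\xi^\ast])\ge\varepsilon_0>0$, a contradiction. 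The main obstacle is the first step: arranging a quantitative choice of representatives on a \emph{common} stable leaf with Sasaki bound depending only on $D$, for which one must combine the compactness Claim of Lemma~\ref{centraldistance} with cocompactness of the deck action; once this is in place, the no-focal-point geometry of $\tilde M$ and the Flat Strip Theorem do the rest.
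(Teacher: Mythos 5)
Your proof is correct and is essentially the paper's argument with the internal machinery unpacked. The paper argues by contradiction, invokes Lemma~\ref{centraldistance} to get the forward-time bound in the quotient metric $\bar d$, shifts by $t_n=n$ using covering isometries and cocompactness, extracts a convergent subsequence, and concludes by citing the expansiveness of $\bar\psi_t$ (the proof of Lemma~\ref{expansive-1} establishes precisely that bi-asymptotic orbits in $\bar X$ come from a single strip). You reproduce these same steps but work directly with Sasaki distances on $T_1\tilde M$: you re-derive the forward Sasaki bound from the no-focal-point monotonicity plus the Lipschitz control of Lemma~\ref{Busemann}(ii) rather than quoting Lemma~\ref{centraldistance} as a black box, and you close the argument with the Flat Strip Theorem and Definition~\ref{def:quotientflow} rather than quoting expansiveness. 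Since the paper's proof of Lemma~\ref{expansive-1} is itself a Flat Strip argument, and Lemma~\ref{centraldistance}'s proof rests on exactly the compactness Claim and no-focal-point monotonicity you invoke, the two routes are substantively the same; yours is slightly more self-contained and explicit in $T_1\tilde M$, at the cost of a small amount of redundancy. Two cosmetic remarks: writing $\bar\xi^\ast\in F(\bar\eta^\ast)$ conflates a point of $T_1\tilde M$ with a strip in $\tilde M$ -- you mean $\gamma_{\bar\xi^\ast}\subset F(\bar\eta^\ast)$; and note that Definition~\ref{def:quotientflow} requires bi-asymptoticity \emph{and} a common stable leaf, so your observation that the limits $\bar\eta^\ast,\bar\xi^\ast$ still lie on a common stable leaf (continuity of the foliation) is not a side remark but a needed step -- you correctly include it.
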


\begin{proof}
The proof follows from a standard argument of expansive dynamics which we only sketch.
By contradiction, suppose that there exist numbers $D,\varepsilon>0$ and sequences $([\bar{\eta}_{n}])_n, ([\bar{\xi}_{n}])_n\subset \tilde{W}^{\ss}([\bar{\theta}_{n}])$ such that for every $n\ge1$
\begin{equation}\label{secondprop}
	\bar{d}\big([\bar{\eta}_{n}], [\bar{\xi}_{n}] \big) \leq D
	\quad\text{ and }\quad
	\bar{d}\big(\bar{\psi}_{n}([\bar{\xi}_{n}]), \bar{\psi}_{n}([\bar{\eta}_{n}])\big) \geq  \varepsilon\,.
\end{equation}
By Lemma \ref{centraldistance} there exists $D'>0$ such that
 $\bar{d}(\bar{\psi}_{t}([\bar{\eta}_n]),\bar{\psi}_{t}([\bar{\xi}_n])) \leq D'$
for every $ t \geq 0$.

By co-compactness, there is a fundamental domain $\cD$ and we can find a subsequence $(n_k)_k$ and covering isometries $\beta_{n_k}\colon T_1\tilde M\to T_1\tilde M$ such that for every $k\ge1$ we have
\[
	\hat\eta_{n_k}\eqdef \beta_{n_k}(\bar\eta_{n_k})\in \cD,\quad
	\hat\xi_{n_k}\eqdef \beta_{n_k}(\bar\xi_{n_k})\in \cD,
\] 
and such that these subsequences converge
\[
	\hat\eta_{n_k}\to \bar\eta_\infty,\quad
	\hat\xi_{n_k}\to\bar\xi_\infty
\]
and that
\[
	\bar d(\bar\psi_t(\hat\eta_{n_k}),\bar\psi_t(\hat\xi_{n_k}))\le D'
\]
for every $t\ge -n_k$. It follows from our assumption that $\bar\eta_\infty$ and $\bar\xi_\infty$ are
distinct points whose orbits are
bi-asymptotic. On the other hand, by Lemma \ref{expansive-2} the flow $\bar{\psi}_{t}$ is
expansive, so these two orbits
coincide. But this contradicts the second property of the sequences $ [\bar{\xi}_{n}],[\bar{\eta}_{n}] $ in~\eqref{secondprop}.
This proves the lemma.
\end{proof}

Another important property of the invariant sets of the quotient flows is the following.

\begin{lemma}
For each $\ast=\cs,\ss,\cu,\uu$, the set $\tilde{W}^{\ast}([\bar{\theta}])$ varies continuously in $[\bar{\theta}] \in \bar{X}$, uniformly on compact sets, with respect to the Hausdorff topology.
\end{lemma}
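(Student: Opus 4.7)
The plan is to reduce the continuity statement in $\bar X$ to the continuity of the foliations $\tilde{\mathscr F}^{\ast}$ in $T_1\tilde M$ (Theorem~\ref{central set} and its unstable analogue) via a lifting argument, and then transport the conclusion to $X$ by the covering projection $\bar\Pi$ of Remark~\ref{rem:manifolds}. Throughout, a key structural observation is that each leaf $\tilde{\mathscr F}^{\ast}(\bar\theta)$ depends only on the equivalence class $[\bar\theta]$: if $\bar\theta^{\ast}\in\mathcal I(\bar\theta)$, then $\bar\theta^{\ast}\in\tilde{\mathscr F}^{\s}(\bar\theta)\cap\tilde{\mathscr F}^{\u}(\bar\theta)$, so $\tilde{\mathscr F}^{\s/\u}(\bar\theta^{\ast}) = \tilde{\mathscr F}^{\s/\u}(\bar\theta)$ by disjointness of distinct leaves, and the same then holds for the center stable/unstable leaves.

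First I would argue by subsequences: to show that $[\bar\theta_n]\to [\bar\theta]$ implies $\tilde W^{\ast}([\bar\theta_n])\to \tilde W^{\ast}([\bar\theta])$ in Hausdorff distance on compact subsets of $\bar X$, it suffices to extract from any subsequence a further subsequence that does converge. Given such a subsequence, I choose an arbitrary lift $\bar\theta_n'\in\mathcal I(\bar\theta_n)=\bar\chi^{-1}([\bar\theta_n])$. The main step is to find lifts that converge to some $\bar\theta^{\ast}\sim\bar\theta$. For this I invoke Lemma~\ref{basis}: there is a neighbourhood basis $\{\bar\chi(A_m)\}_m$ of $[\bar\theta]$ in $\bar X$ whose preimages $A_m$ are saturated ($\bar\chi^{-1}(\bar\chi(A_m))=A_m$) open sets that shrink to $\mathcal I(\bar\theta)$ as the parameters $\tau,\varepsilon,\delta,\rho^{\pm}\to 0$. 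Since $[\bar\theta_n]\in\bar\chi(A_m)$ for $n$ large, any lift $\bar\theta_n'$ lies in $A_m$; a diagonal extraction together with the compactness of the closures $\overline{A_m}$ then yields a further subsequence $\bar\theta_{n_k}'\to\bar\theta^{\ast}$ with $\bar\theta^{\ast}\in\mathcal I(\bar\theta)$, i.e.\ $\bar\theta^{\ast}\sim\bar\theta$.

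Once this lifting is in hand the rest is formal. By Theorem~\ref{central set} the foliations $\tilde{\mathscr F}^{\cs},\tilde{\mathscr F}^{\s}$ (and analogously the center unstable/unstable ones) are continuous, so for any compact set $L\subset T_1\tilde M$ the compact pieces $\tilde{\mathscr F}^{\ast}(\bar\theta_{n_k}')\cap L$ converge in Hausdorff distance to $\tilde{\mathscr F}^{\ast}(\bar\theta^{\ast})\cap L=\tilde{\mathscr F}^{\ast}(\bar\theta)\cap L$. The map $\bar\chi\colon T_1\tilde M\to\bar X$ is continuous, and continuous maps between metric spaces preserve Hausdorff convergence of compact sets; applied to a compact exhaustion of $\bar X$ by sets as in Lemma~\ref{exhaustionX}, whose $\bar\chi$-preimages in $T_1\tilde M$ are compact by the Claim inside the proof of Lemma~\ref{centraldistance}, this gives
\[
\tilde W^{\ast}([\bar\theta_{n_k}])=\bar\chi\big(\tilde{\mathscr F}^{\ast}(\bar\theta_{n_k}')\big) \;\longrightarrow\; \bar\chi\big(\tilde{\mathscr F}^{\ast}(\bar\theta)\big)=\tilde W^{\ast}([\bar\theta])
\]
in Hausdorff distance on compact subsets of $\bar X$, as required. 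Finally, the corresponding statement for $W^{\ast}([\theta])\subset X$ follows from the covering projection $\bar\Pi\colon\bar X\to X$ being a local isometry between $(\bar X,\bar d)$ and $(X,d)$.

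The main obstacle, and where the absence of focal points and the structural results of Section~\ref{quotientstructure} are used, is the lifting step: one has to know that a convergent sequence in the quotient $\bar X$ admits lifts in $T_1\tilde M$ that subconverge to a point equivalent to the limit. This is where the saturated neighbourhood basis provided by Lemma~\ref{basis}, together with the bounded strip width (Lemma~\ref{width}), is essential, since without saturation one has no control of which preimages are close to $\mathcal I(\bar\theta)$.
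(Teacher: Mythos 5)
Your proof is correct and takes essentially the same route as the paper: reduce to the continuity of the foliations $\tilde{\mathscr F}^{\ast}$ in $T_1\tilde M$ (Theorem~\ref{central set}) and then push the resulting Hausdorff convergence through the quotient map $\bar\chi$ and the covering projection $\bar\Pi$. The paper's own proof is much terser, disposing of the second step with the phrase ``the quotient preserves continuity properties,'' whereas you usefully spell out the lifting argument via the saturated neighborhood basis of Lemma~\ref{basis}, the leaf identity $\tilde{\mathscr F}^{\ast}(\bar\theta^{\ast})=\tilde{\mathscr F}^{\ast}(\bar\theta)$ for $\bar\theta^{\ast}\in\mathcal I(\bar\theta)$, and the compactness of $\bar\chi^{-1}(K)$ for compact $K\subset\bar X$.
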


\begin{proof}
Recall that, by
 Theorem~\ref{central set}, each family of invariant sets constitutes a continuous foliation by $C^{1}$ leaves. So they vary continuously on compact sets with respect to the Hausdorff topology. In fact, in $T_{1}\tilde{M}$ the invariant foliations are continuous with respect to the $C^{1}$ compact open  topology, that is stronger than continuity with respect to the Hausdorff topology. The quotient preserves continuity properties, so the same holds for the quotient invariant sets.
\end{proof}

\subsection{Further  properties}

\begin{lemma} \label{heteroclinicquotient}
Given $\bar\theta=(p,v)\in T_1\tilde M$, for every $\bar\eta=(q,w)\in T_1\tilde M$ such that $(q,-w)\ne(p,v)$ the set $\tilde{W}^{\cs}([\bar\eta]) \cap \tilde{W}^{\cu}([\bar{\theta}])$ consists of a single orbit of $\bar{\psi}_{t}$.
\end{lemma}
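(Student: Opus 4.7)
\emph{Existence.} The first step is to produce one orbit in the intersection. Since $(p,v)\neq(q,-w)$, Theorem~\ref{transitivity}(iii) gives a point $\bar\xi\in\tilde{\mathscr F}^{\cs}(\bar\eta)\cap\tilde{\mathscr F}^{\cu}(\bar\theta)$. Both center stable and center unstable sets are $\phi_t$-invariant, so the entire geodesic orbit $\phi_\bR(\bar\xi)$ is in the intersection, and its projection $\bar\chi(\phi_\bR(\bar\xi))$ is a full $\bar\psi_t$-orbit contained in $\tilde W^{\cs}([\bar\eta])\cap\tilde W^{\cu}([\bar\theta])$.

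\emph{Uniqueness.} Let $[\bar\xi_1],[\bar\xi_2]\in\tilde W^{\cs}([\bar\eta])\cap\tilde W^{\cu}([\bar\theta])$. My first task is to replace each $[\bar\xi_i]$ by a lift that simultaneously lies in $\tilde{\mathscr F}^{\cs}(\bar\eta)$ and $\tilde{\mathscr F}^{\cu}(\bar\theta)$. By definition of these sets in $\bar X$, there exist lifts $\bar\xi_i^{\cs}\in\tilde{\mathscr F}^{\cs}(\bar\eta)$ and $\bar\xi_i^{\cu}\in\tilde{\mathscr F}^{\cu}(\bar\theta)$ with $[\bar\xi_i^{\cs}]=[\bar\xi_i^{\cu}]=[\bar\xi_i]$. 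Since $\bar\xi_i^{\cs}\sim\bar\xi_i^{\cu}$, the geodesics $\gamma_{\bar\xi_i^{\cs}}$ and $\gamma_{\bar\xi_i^{\cu}}$ are bi-asymptotic. Then the triangle inequality applied to forward-asymptoticity to $\gamma_{\bar\eta}$ (respectively, backward-asymptoticity to $\gamma_{\bar\theta}$) shows that $\bar\xi_i^{\cs}$ is also in $\tilde{\mathscr F}^{\cu}(\bar\theta)$, and conversely. Hence we may fix representatives $\bar\xi_1,\bar\xi_2\in\tilde{\mathscr F}^{\cs}(\bar\eta)\cap\tilde{\mathscr F}^{\cu}(\bar\theta)$.

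Now $\gamma_{\bar\xi_1}$ and $\gamma_{\bar\xi_2}$ are both forward asymptotic (up to reparametrization) to $\gamma_{\bar\eta}$ and both backward asymptotic to $\gamma_{\bar\theta}$, so they are bi-asymptotic to each other. The Flat Strip Theorem~\ref{flatstrip} places them inside a common isometrically embedded Euclidean strip $[a,b]\times\bR$ as two parallel vertical lines, say $\gamma_{\bar\xi_i}$ corresponding to $(x_i,\cdot)$. A direct computation of the Busemann functions inside this strip shows that $H^+(\bar\xi_i)$ and $H^-(\bar\xi_i)$ meet the strip in the same horizontal line $\{y=y_i\}$, so $\mathcal I(\bar\xi_i)$ contains the horizontal cross-section at height $y_i$. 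Setting $t_0=y_2-y_1$ gives $\phi_{t_0}(\bar\xi_1)\in\mathcal I(\bar\xi_2)$, that is $\phi_{t_0}(\bar\xi_1)\sim\bar\xi_2$, and therefore $\bar\psi_{t_0}([\bar\xi_1])=[\bar\xi_2]$. Thus $[\bar\xi_1]$ and $[\bar\xi_2]$ lie on the same orbit, completing the proof.

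\emph{Main obstacle.} The delicate step is handling lifts: $\bar X$ is a quotient, so two elements in the intersection are a priori represented by lifts living only in one of the two center sets, and one must leverage the $\sim$-relation, together with the Flat Strip Theorem, to promote each representative to a genuine lift in $\tilde{\mathscr F}^{\cs}(\bar\eta)\cap\tilde{\mathscr F}^{\cu}(\bar\theta)$. Once that is done, producing the correct time shift $t_0$ is a routine Euclidean computation in the flat strip, aided by the fact that positive and negative horocycles collapse to the same horizontal foliation there.
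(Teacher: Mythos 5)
Your proof is correct and follows essentially the same route as the paper: existence via Theorem~\ref{transitivity}(iii), and uniqueness by observing that the intersection $\tilde{\mathscr F}^{\cs}(\bar\eta)\cap\tilde{\mathscr F}^{\cu}(\bar\theta)$ upstairs is a flat strip (of bi-asymptotic orbits), which the quotient map collapses to a single $\bar\psi_t$-orbit. The paper's proof is terse and simply asserts that ``the quotient map preserves this intersection''; you helpfully make this explicit by upgrading the separate lifts $\bar\xi_i^{\cs}$ and $\bar\xi_i^{\cu}$ to a common representative in $\tilde{\mathscr F}^{\cs}(\bar\eta)\cap\tilde{\mathscr F}^{\cu}(\bar\theta)$, and you spell out the Euclidean computation inside the flat strip. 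One small point: for the upgrade step you appeal to the triangle inequality, but what is actually needed is that mere backward asymptoticity implies being a backward Busemann asymptote (so that $\bar\xi_i^{\cs}$ lies in $\tilde{\mathscr F}^{\cu}(\bar\theta)$, not just within bounded distance of it); this holds in the paper's setting since geodesic rays diverge (Green) and there are no focal points, but it is worth naming the fact rather than calling it a triangle inequality.
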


\begin{proof}
By Theorem~\ref{transitivity} item (iii), the intersection ${\tilde {\mathscr  F}}^{\cs}(\bar\eta) \cap {\tilde {\mathscr  F}}^{\cu}(\bar{\theta})$ is nonempty and consists of a strip of bi-asymptotic orbits. The quotient map preserves this intersection. Observing that each strip becomes a single orbit in the quotient space, we deduce the statement.
\end{proof}

\begin{proof}[Proof of Theorem~\ref{expansivity}]
		Lemma \ref{expansive-2} shows item (i).
	The results in Subsection~\ref{sec:invsethomocl} imply items (ii) and (iii).
	Lemma~\ref{heteroclinicquotient} proves item (iv).

What remains to show are items (v)--(vi). We will sketch their proof using the basis of sets $\bar\chi(A_{(\cdot)}(\cdot))$ constructed in Lemma~\ref{basis} and their projection to $X$. As observed in the end of Section~\ref{quotientstructure}, the projection $\bar\Pi\colon\bar X\to X$ defines a local homeomorphism and hence the basis $\bar\chi(A_{(\cdot)}(\cdot))$ naturally projects to a basis in $X$.

Item (v) claims the transitivity of the quotient flow. This follows from the transitivity of the geodesic flow of $(M,g)$ (Theorem \ref{transitivity} item (ii)) and the fact that the family of sets $ A_{\bar\theta}(\cdot)$ established in Lemma \ref{basis}  is a family of open neighborhoods of
sections of strips $\mathcal I(\bar\theta)\subset T_1\tilde M$. Indeed, each dense orbit must intersect any open set and hence each dense orbit intersects any set $A$ from the basis specified in Lemma \ref{basis}.
Hence, the quotient of a dense orbit must intersect any open set $\bar\chi(A)$. This proves the density of the quotient of any dense orbit in the quotient topology.

Item (vi) claims the minimality of the quotient of each strong stable set ${W}^{\ss}([\theta])$ and each strong unstable set   ${W}^{\uu}([\theta])$.
By Theorem \ref{transitivity} item (i), the horocycle foliations of $T_{1}M$ are minimal. The same argument applied in the previous paragraph shows that the quotient of any dense set of $T_{1}M$ is dense in the quotient $X$.

Item (vii) claims that the quotient flow $\psi_{t}$ is topologically mixing. This follows from Theorem \ref{transitivity} item (ii)
and the fact that the sets from the basis specified in Lemma \ref{basis} form a family of open neighborhoods in $T_{1}\tilde M$. Hence, the mixing property is verified in particular in these sets. Taking the quotient, this property is verified as well by the quotient flow.

This finishes the proof.
\end{proof}

\begin{proof}[Proof of Theorem~\ref{quotient}]
The theorem is a consequence of Proposition~\ref{productstructure} and Theorem~\ref{expansivity}.
\end{proof}

\section{Measures of maximal entropy}\label{sec:entropystatement}

In this setting we study the entropy of a flow. We first establish some general results and finally prove Theorem~\ref{maximalentropy}.

\subsection{Expansive flows with local product structure}

In this subsection we start by considering a general continuous flow $\psi_t\colon X\to X$ without singular points on a compact metric space $X$. Of course, we have in mind the quotient flow defined in Section~\ref{quotientstructure}.

Given positive numbers $a$ and $\delta$ we call a pair of sequences $(x_k)_{k=k_0}^{k_1}$ of points $x_k\in X$ and numbers $(\tau_k)_{k=k_0}^{k_1}$ ($k_0=-\infty$ or $k_1=\infty$ are permitted), a \emph{$(\delta,a)$-pseudo orbit} for the flow $(\psi_t)_t$ if for every $k=k_0,\ldots,k_1-1$ we have $\tau_k\ge a$ and $d(\psi_{\tau_k}(x_k),x_{k+1})<\delta$.
Denote $s_0=0,s_k=\tau_0+\ldots+\tau_{k-1}$, and $s_{-k}=\tau_{-k}+\ldots+\tau_{-1}$.
A $(\delta,a)$-pseudo orbit $(x_k,\tau_k)_k$ is \emph{$\varepsilon$-traced} by a true orbit $(\psi_t(y))_{t\in \bR}$ if there is some increasing homeomorphism $\alpha\colon\bR\longrightarrow\bR$ satisfying $\alpha(0)=0$, such that for every $k=0,1,\ldots$ for every $t\ge0$ satisfying $s_k\le t<s_{k+1}$ we have
\[
	d(\psi_{\alpha(t)}(y),\psi_{t-s_k}(x_k))\le\varepsilon
\]
and for every $k=1,2,\ldots$ for every $t\le0$ satisfying $-s_{-k}\le t\le -s_{-k+1}$
\[
	d(\psi_{\alpha(t)}(y),\psi_{t+s_{-k}}(x_{-k}))\le\varepsilon
\]
The flow $(\psi_t)_t$ is said to have the \emph{pseudo orbit tracing property with respect to time $a>0$} if for every $\varepsilon>0$ there is $\delta_0>0$ such that every $\delta\in(0,\delta_0)$ for every $(\delta,a)$-pseudo orbit is $\varepsilon$-traced by a true orbit of $(\psi_t)_t$. For $a=1$ we simply speak of the \emph{pseudo orbit tracing property}.

Recall the definition of local product structure in Section~\ref{sec:dynquoflo}.

\begin{proposition}[{\cite[Theorem 7.1]{Tho:91}}]\label{pro:prooop1}
	Every continuous expansive flow without singular points on a compact metric space which has local product structure has the pseudo orbit tracing property.
\end{proposition}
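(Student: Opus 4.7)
The plan is to adapt Bowen's classical bracket construction for expansive homeomorphisms with local product structure to the flow setting, with the main additional work consisting in keeping careful track of the time reparametrisations that appear each time the local product bracket is invoked.

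First I would fix scales. Let $\varepsilon_0$ be an expansivity constant for $\psi_t$, and given $\varepsilon\in(0,\varepsilon_0/8)$ use the local product structure to obtain $\delta_1=\delta_1(\varepsilon)>0$ so that whenever $d(x,y)<\delta_1$ there is a unique $\tau(x,y)$ with $\lvert\tau\rvert\le\varepsilon$ and a unique point $\langle x,y\rangle\in W^{\cs}_\varepsilon(\psi_{\tau(x,y)}(x))\cap W^{\cu}_\varepsilon(y)$. Using continuity of the flow on a compact set, I would then choose $\delta_0\le\delta_1$ small enough that the orbit pieces of length up to the next bracketing time stay in a uniform tube of radius $\varepsilon/4$ around their initial segment whenever the constituent brackets are $\delta_0$-close.

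Second I would set up the finite-time shadowing. Given a $(\delta,a)$-pseudo orbit $(x_k,\tau_k)_{k\in\bZ}$ with $\delta<\delta_0$, for each $N\ge1$ I would build an honest orbit segment tracing $x_{-N},\ldots,x_N$ by an inductive bracketing: set $z_N^{(N)}=x_N$, and for $k=N,N-1,\ldots,-N+1$, define
\[
	z_{k-1}^{(N)}\eqdef \langle x_{k-1},\,\psi_{-\tau_{k-1}}(z_k^{(N)})\rangle,
\]
after first shifting $z_k^{(N)}$ back by approximately $\tau_{k-1}$ so that the bracket is defined. At each step the hypothesis $\tau_k\ge a$ is essential: it ensures that the orbit segment $(\psi_t(z_{k-1}^{(N)}))_{0\le t\le \tau_{k-1}+O(\varepsilon)}$ has enough time for the center-stable/center-unstable contraction built into the local product structure to prevent cascading errors in both the spatial distance to the pseudo orbit and the accumulated time shifts. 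By the definition of the bracket, the resulting orbit piece through $z_{-N}^{(N)}$ will, after an explicit strictly increasing time reparametrisation $\alpha_N$ (whose derivative differs from $1$ by $O(\varepsilon/a)$), stay within $\varepsilon$ of the pseudo orbit on the interval $[-s_{-N},s_N]$.

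Third, I would pass to a shadowing of the whole two-sided pseudo orbit. The sequence $\{(z_{-N}^{(N)},\alpha_N)\}_{N\ge1}$ lives in the compact space $X$ together with the space of equicontinuous increasing reparametrisations (bounded by Arzel\`a--Ascoli thanks to the uniform derivative estimates), so I can extract a subsequential limit $(y,\alpha)$ with $\alpha(0)=0$. A standard diagonal argument together with the continuity of $\psi_t$ shows that the limit orbit $(\psi_{\alpha(t)}(y))_{t\in\bR}$ $\varepsilon$-traces the pseudo orbit in the sense required by the definition of the tracing property. Finally, expansiveness guarantees uniqueness of this tracing orbit up to an $\varepsilon$-time shift, which is cosmetic.

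The main obstacle, as I see it, is controlling the time reparametrisation: each bracket contributes a small time shift of size up to $\varepsilon$, and if these shifts were to accumulate linearly with $N$ the reparametrisation $\alpha_N$ would cease to be a well-controlled homeomorphism in the limit. The lower bound $\tau_k\ge a$ on the transit times, together with the exponential-type control from expansiveness once the orbit segment is long enough, is exactly what is needed to make the total time shift summable (or at least to keep the derivative of $\alpha_N$ uniformly close to $1$), and therefore to pass to the limit cleanly.
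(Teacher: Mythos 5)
The paper does not prove this statement: it is cited directly from Thomas \cite[Theorem 7.1]{Tho:91} and used as a black box. Your sketch follows the standard Bowen--Thomas strategy (finite-time shadows by repeated bracketing, compactness to extend to an infinite-time shadow, expansivity for uniqueness), which is indeed the route Thomas takes, so there is no ``different approach'' to compare; I will just flag the points where the sketch as written would break.

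There is a genuine gap in the bracketing step. Setting $z_{k-1}^{(N)} = \langle x_{k-1},\, \psi_{-\tau_{k-1}}(z_k^{(N)})\rangle$ presumes $d\big(x_{k-1}, \psi_{-\tau_{k-1}}(z_k^{(N)})\big)<\delta_1$, and nothing forces this: knowing $z_k^{(N)}$ is within $\varepsilon$ of $x_k$ together with $d(\psi_{\tau_{k-1}}(x_{k-1}),x_k)<\delta$ gives that $\psi_{\tau_{k-1}}(x_{k-1})$ and $z_k^{(N)}$ are $(\delta+\varepsilon)$-close, but the map $\psi_{-\tau_{k-1}}$ has no uniform modulus of continuity because the transit times are only bounded \emph{below} by $a$, not above, and applying it can separate two nearby points arbitrarily. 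The bracketing must instead be performed between two $\delta$-close points at the jump itself, for instance forward as $w_k \eqdef \langle \psi_{\tau_{k-1}}(w_{k-1}),\, x_k\rangle$, after which the membership $w_k\in W^{\cu}_\varepsilon(x_k)$ controls the backward orbit on $[s_{k-1},s_k]$ without ever pulling a long orbit piece backward by the flow before bracketing; the finite-time shadow is then $\psi_{-s_N}(w_N)$, and your compactness argument in step three closes the proof.

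Two smaller remarks. The phrase ``center-stable/center-unstable contraction built into the local product structure'' overstates the hypothesis: the sets $W^{\cs}_\varepsilon$, $W^{\cu}_\varepsilon$ as defined in this paper only give uniformly \emph{bounded} deviation, not contraction, and the decay-like control you invoke actually comes from expansivity, compare Lemma~\ref{lem:maisumdeTho}. And the time-shift control should be stated as a uniform bi-Lipschitz bound on the $\alpha_N$ rather than summability of shifts (they need not be summable and need not be); your estimate that the slope stays within $1\pm O(\varepsilon/a)$ is the right one, and the lower bound $\tau_k\ge a$ is exactly what makes it hold, so that part is fine once phrased correctly.
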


We say that the flow $(\psi_t)_t$ has the \emph{periodic orbit specification property}\footnote{We follow the definition in~\cite{Oka:97} which corrects some results on the uniqueness of an equilibrium state (e.g. measure of maximal entropy) in~\cite{Fra:77} under stronger hypotheses. Observe that condition 1) is slightly stronger than in usual definitions (for example in~\cite{Fra:77}). It requires that for the existence of the periodic point  in any intermediate construction step of solely considering the subset of points $\{x_0,\ldots,x_{k+1}\}\subset\{x_0,\ldots,x_n\}$ the numbers $r_k$ are, in fact, defined recursively.
} if for every $\varepsilon>0$ there is a positive number $T=T(\varepsilon)$ such that for any  integer $n\ge2$, any collection of points $x_0,\ldots,x_n\in X$, and any sequence of real numbers $t_0< t_1<\cdots<t_{n+1}$ satisfying $t_{k+1}-t_k\ge T$ for every $k=0,\ldots,n$, there is a sequence of numbers $r_0,\ldots,r_{n+1}$ having the following properties: $r_0=0$ and for every $k=0,\ldots,n$
\begin{itemize}
\item[1)] $r_{k+1}$ is determined by $x_0,\ldots,x_{k+1}$ and by $t_0,\ldots,t_{k+1}$,
\item[2)] $\lvert r_{k+1}-r_k\rvert<\varepsilon$,
\item[3)] there is a periodic point $y\in X$  with period $\tau$ satisfying
	\[
		\lvert\tau-(t_{k+1}-t_0)\rvert\le (k+1)\varepsilon
	\]	
	and satisfying for every $\ell=0,\ldots,k$ and for every $t\in [t_\ell,t_{\ell+1}-T]$
	\[
	d(\psi_{t+r_\ell}(y),\psi_{t-t_\ell}(x_\ell))<\varepsilon\,.
	\]
\end{itemize}

\begin{proposition}\label{pro:camposs}
	Every continuous expansive topologically mixing flow without singular points on a compact metric space  having the pseudo orbit tracing property has the periodic orbit specification property.
\end{proposition}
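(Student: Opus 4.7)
The plan is a classical Bowen-style construction: build a periodic pseudo-orbit whose successive pieces are genuine orbit segments of $x_0,\ldots,x_n$ glued at the prescribed times $t_k$ by short bridges supplied by topological mixing, then shadow this pseudo-orbit by a true orbit via Proposition~\ref{pro:prooop1}, and finally use expansivity to upgrade that true orbit to a genuine periodic one.

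For the setup, fix $\varepsilon>0$ and let $e>0$ be an expansivity constant; let $\delta_0>0$ be the tracing constant associated to some small $\varepsilon_1<\min\{\varepsilon,e\}/4$. By uniform continuity of the flow on $X\times[-\varepsilon,\varepsilon]$, pick $\gamma\in(0,\delta_0/2)$ such that $d(u,v)<\gamma$ implies $d(\psi_s u,\psi_s v)<\delta_0/2$ for every $|s|\le\varepsilon$. Covering $X$ by finitely many balls of radius $\gamma/2$ and applying topological mixing to each ordered pair of such balls, extract a uniform $T_\ast>0$ such that $\psi_s(B(p,\gamma/2))\cap B(q,\gamma/2)\ne\emptyset$ for every $p,q\in X$ and every $s\ge T_\ast$; set $T:=T_\ast+\varepsilon$.

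For the recursive construction, write $x_{n+1}:=x_0$, set $r_0:=0$, and for $k=0,\ldots,n$ pick $r_{k+1}\in(r_k-\varepsilon,r_k+\varepsilon)$ (by some canonical rule so that $r_{k+1}$ depends only on $x_0,\ldots,x_{k+1}$ and $t_0,\ldots,t_{k+1}$) together with a point $z_k$ within $\gamma/2$ of $\psi_{t_{k+1}-t_k-T}(x_k)$ such that $\psi_{T+r_{k+1}-r_k}(z_k)$ lies within $\gamma/2$ of $x_{k+1}$; the mixing estimate guarantees existence. Extending the prescription periodically with period $\sigma:=t_{n+1}-t_0+r_{n+1}$, take on each $[t_\ell+r_\ell,t_{\ell+1}-T+r_\ell]$ the true orbit of $x_\ell$, and on each $[t_{\ell+1}-T+r_\ell,t_{\ell+1}+r_{\ell+1}]$ the true orbit of $z_\ell$. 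The only discontinuities have size at most $\gamma<\delta_0$, so a discretization with time step $1$ yields a bi-infinite periodic $(\delta_0,1)$-pseudo orbit. Proposition~\ref{pro:prooop1} then provides a true orbit $(\psi_t(y))_{t\in\bR}$ and an increasing homeomorphism $\alpha\colon\bR\to\bR$ fixing $0$ that $\varepsilon_1$-traces it. Periodicity of the pseudo-orbit together with expansivity (uniqueness of tracing up to bounded time shifts, applicable because $\varepsilon_1<e$) forces $y$ itself to be periodic with period $\tau$ satisfying $|\tau-\alpha(\sigma)|<\varepsilon$. Applying the same recipe to the truncated data $x_0,\ldots,x_k$, $t_0,\ldots,t_{k+1}$ for every $k\le n$ reuses the same numbers $r_0,\ldots,r_{k+1}$ and yields a periodic orbit verifying condition 3) of the definition.

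The main obstacle is to align the increasing homeomorphism $\alpha$ produced by tracing with the piecewise-constant shifts $r_\ell$ demanded by the specification definition: what tracing gives is a loose estimate $d(\psi_{\alpha(t)}(y),P(t))<\varepsilon_1$ for some reparametrization $\alpha$, whereas we must deliver $d(\psi_{t+r_\ell}(y),\psi_{t-t_\ell}(x_\ell))<\varepsilon$ on each $[t_\ell,t_{\ell+1}-T]$. On those intervals the pseudo-orbit is a genuine orbit segment of $x_\ell$, so expansivity pins $\alpha$ to $t\mapsto t+r_\ell$ up to an error bounded by $e$; the jump from $r_\ell$ to $r_{\ell+1}$ happens inside the transition window $[t_{\ell+1}-T,t_{\ell+1}]$ of length $T$, which provides exactly the room needed to absorb an adjustment of size $|r_{k+1}-r_k|<\varepsilon$. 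Making this conversion quantitative is what dictates the choice of all the smaller scales $e,\varepsilon_1,\delta_0,\gamma,T$ in the setup, and it yields the estimate $|\tau-(t_{n+1}-t_0)|=|r_{n+1}|\le(n+1)\varepsilon$ together with the required piecewise shadowing.
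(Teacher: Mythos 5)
Your proposal follows the same Bowen-style construction that the paper uses: cover $X$ by finitely many small sets, extract a uniform mixing time $T$, form a periodic pseudo-orbit by alternating genuine orbit segments of the $x_k$ with short mixing bridges, invoke Proposition~\ref{pro:prooop1} to trace it by a true orbit, use expansivity of the flow to force that tracing orbit to be periodic, and finally convert the tracing reparametrization $\alpha$ into the constant shifts $r_\ell$ by means of a quantitative expansivity estimate (Lemma~\ref{lem:maisumdeTho} in the paper, which is exactly the statement you invoke informally in your last paragraph). You also correctly identify the need to repeat the construction for every truncation $k\le n$, which is how the paper addresses the recursive requirement 1) in the definition of periodic orbit specification.

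The one place where your argument diverges and does not quite close is the treatment of the numbers $r_\ell$. You choose them \emph{a priori} by a ``canonical rule'' and build them into the bridge durations $T+r_{k+1}-r_k$, whereas the paper keeps the bridge length fixed ($T/2$) and lets the $r_\ell$ emerge \emph{post hoc} from the tracing reparametrization. With an a priori choice you still owe an explanation of why the shadowing drift $\alpha(\hat t_\ell)-\hat t_\ell$, which genuinely accumulates by an amount of order $\varepsilon$ at each gluing, coincides with the shifts you fixed in advance; a naive canonical rule such as $r_{k+1}:=r_k$ manifestly fails to absorb this accumulated drift, and any rule designed to match the shadowing is no longer a priori. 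The clean way out is precisely what the paper does: fix the bridge time, shadow the $k$-truncated periodic pseudo-orbit separately for each $k$, and define $r_{k+1}$ from the reparametrization of that shadowing, so that it depends only on $x_0,\ldots,x_{k+1}$ and $t_0,\ldots,t_{k+1}$ by construction. A smaller, fixable issue is that with bridge length $T$ your genuine segments have duration $t_{\ell+1}-t_\ell-T$, which can be $0$; using a bridge of length $T/2$ (as the paper does) guarantees all pieces have length $\ge T/2$, so the result is a bona fide $(\delta,1)$-pseudo orbit once $T\ge 2$.
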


We will need the following auxiliary result (see~\cite[Proposition 3.2]{Tho:91}).

\begin{lemma}\label{lem:maisumdeTho}
	Let $(\psi_t)_t$ be a continuous expansive flow without singular points on a compact metric space which has local product structure.
	For every $\varepsilon>0$, there exists $\delta=\delta(\varepsilon)>0$ such that for every $x,y\in X$, every interval $[T_1,T_2]$ containing $0$, and for every increasing homeomorphism $\alpha\colon \bR\to\bR$ satisfying $\alpha(0)=0$ and $d(\psi_{\alpha(t)}(x),\psi_t(y))\le\delta$ for every $t\in[T_1,T_2]$ we have
\[	
	\lvert\alpha(t)-t\rvert<\varepsilon\,.
\]
\end{lemma}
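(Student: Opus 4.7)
The plan is to use the local product structure of $\psi_t$ to encode the closeness hypothesis into a continuous ``time–mismatch'' function $\tau(t)$, and then to show that $\alpha(t)-t$ equals the variation of $\tau$, which is bounded by the product–structure constant. Concretely, fix $\varepsilon_0\in(0,\varepsilon/2)$ small enough that the local product structure of Section~\ref{sec:dynquoflo} holds with parameter $\varepsilon_0$, and let $\delta=\delta(\varepsilon_0)>0$ be the associated constant; this will be the $\delta$ we use in the lemma. For each $t\in[T_1,T_2]$, the hypothesis $d(\psi_{\alpha(t)}(x),\psi_t(y))\le\delta$ produces, by the local product structure, a unique $\tau(t)\in(-\varepsilon_0,\varepsilon_0)$ and an intersection point
\[
z(t)\in W^{\cs}_{\varepsilon_0}\big(\psi_{\tau(t)+\alpha(t)}(x)\big)\cap W^{\cu}_{\varepsilon_0}\big(\psi_t(y)\big).
\]
The function $\tau(\cdot)$ is continuous in $t$ by the uniqueness of the product structure shift together with continuity of $\alpha$, of the flow, and of the foliations. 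From $\alpha(0)=0$ and $d(x,y)\le\delta$ one gets $\lvert\tau(0)\rvert<\varepsilon_0$.

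The heart of the argument is a local cocycle identity for $\tau$. The center stable set is forward–invariant under $\psi_h$ and the center unstable set is backward–invariant, so for $\lvert h\rvert$ small the image $\psi_h(z(t))$ still lies in $W^{\cs}(\psi_{h+\tau(t)+\alpha(t)}(x))\cap W^{\cu}(\psi_{t+h}(y))$, within a slightly larger scale that remains inside the local product structure uniqueness regime. Since $z(t+h)$ is the unique intersection associated to the pair $(\psi_{\alpha(t+h)}(x),\psi_{t+h}(y))$, uniqueness of the product–structure shift forces
\[
\tau(t+h)=h+\tau(t)+\alpha(t)-\alpha(t+h),
\]
i.e., $\tau(t+h)-\tau(t)=h-\bigl(\alpha(t+h)-\alpha(t)\bigr)$. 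Let $E\subset[T_1,T_2]$ be the subset where $\tau(t)-\tau(0)=t-\alpha(t)$. Then $0\in E$; the cocycle identity and continuity of $\tau$ and $\alpha$ make $E$ open (at any $t\in E$ the right-hand side remains in $(-\varepsilon_0,\varepsilon_0)$ for small $h$, so the identity propagates) and closed in $[T_1,T_2]$. By connectedness, $E=[T_1,T_2]$, and therefore for every $t\in[T_1,T_2]$
\[
\lvert\alpha(t)-t\rvert=\lvert\tau(0)-\tau(t)\rvert<2\varepsilon_0<\varepsilon.
\]

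The main obstacle is the rigorous verification of the local cocycle identity. The difficulty is that $W^{\cs}_{\varepsilon_0}$ and $W^{\cu}_{\varepsilon_0}$ are invariant only in opposite time directions, so the Lipschitz control of the flow on short intervals must be used to guarantee that $\psi_h(z(t))$ stays inside the scale where local product structure uniqueness applies; this is why the identity is stated locally in $h$ and upgraded to a global identity only via the open–closed connectedness argument. A secondary technical point is the continuity of $\tau$, which is not stated in Section~\ref{sec:dynquoflo} explicitly and must be obtained from uniqueness of the shift together with continuity of the stable/unstable foliations. Notice that no compactness in $t$ is used beyond connectedness, so the bound is uniform over all intervals $[T_1,T_2]$ containing the origin, as required.
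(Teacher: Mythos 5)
The paper does not prove this lemma at all; it attributes it directly to Thomas~\cite[Proposition 3.2]{Tho:91}, so there is no ``paper's own proof'' to compare against. That said, your argument is correct and, as far as I can tell, follows the same strategy Thomas uses: encode the reparametrization drift in a time-mismatch function $\tau(t)$ coming from the local product structure, derive a local cocycle identity for $\tau$ from uniqueness of the product-structure shift, and bootstrap this to a global bound by an open--closed argument on the connected interval $[T_1,T_2]$. You also correctly flag the two genuine technical points: (a) continuity of $\tau$ (which follows by a compactness/uniqueness argument since $W^{\cs}_{\varepsilon_0}$ and $W^{\cu}_{\varepsilon_0}$ are defined by closed conditions), and (b) the need for a second, slightly larger scale $\varepsilon'$ with its own uniqueness threshold $\delta(\varepsilon')$, because applying $\psi_h$ only moves a point inside a slightly inflated local center-stable/unstable set; one should formally take $\delta:=\min\{\delta(\varepsilon_0),\delta(\varepsilon')\}$ to guarantee uniqueness holds at both scales simultaneously. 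With those details filled in as you sketch, the proof is complete and gives a bound uniform over all intervals, as required.
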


\begin{proof}[Proof of Proposition~\ref{pro:camposs}]
We follow a standard idea of proof (see e.g.~\cite{KwiOpr:12}).
	Let $\varepsilon>0$.	
	Let $\varepsilon_0>0$ be an expansivity constant.
Let $\varepsilon_2\eqdef\min\{\varepsilon,\varepsilon_0/2\}$.
Let $\delta(\varepsilon)$ be as in Lemma~\ref{lem:maisumdeTho}.
Let $\delta_0=\delta_0(\min\{\delta(\varepsilon),\varepsilon_0/2\})$ be the number given by the pseudo orbit tracing property. Let $\delta\in(0,\min\{\delta_0,\varepsilon_2/2\})$.  	
	
	Take a covering of $X$ by finitely many open sets $U_i$ satisfying $\diam U_i\le \delta$. Since the flow is topologically mixing, there exists $T>0$ such that for every index pair $i,j$ we have
	\begin{equation}\label{zezes}
		\psi_t(U_i)\cap U_j\ne\emptyset\quad
		\text{ for every }\lvert t\rvert\ge T/2\,.
	\end{equation}	
	
	Let $x_0,\ldots,x_n\in X$ be a sequence of points and $t_0<t_1<\ldots<t_{n+1}$ a sequence of numbers satisfying $t_{k+1}-t_k\ge T$ for every $k=0,\ldots,n$. For every $k=0,\ldots,n$ let
\[
	y_k:=\psi_{t_{k+1}-t_k-T/2}(x_k)\,.
\]
For every $k=0,\ldots,n$ there are indices $j_k$ and $i_k$ such that
\[
	x_k\in U_{j_k},\quad y_k\in U_{i_k}\,.
\]	
It follows from~\eqref{zezes} that  for every $k=0,\ldots,n-1$ there is $z_k\in U_{i_k}\subset B(y_k,\delta)$ such that $\psi_{T/2}(z_k)\in  U_{j_{k+1}}$ and that for every $k=0,\ldots,n$ there is $w_k\in U_{i_k}$ such that $\psi_{T/2}(w_k)\in U_{j_0}$.
	
	By these choices, for every $k=1,\ldots,n$ consider the pair of finite sequences of points $(\hat x_0,\ldots,\hat x_{2k+3})\eqdef(x_0,z_0,x_1,z_1,\ldots x_{k-1},z_{k-1},x_k,w_k,x_0)$ and of numbers
\[
	(\hat\tau_1,\ldots,\hat\tau_{2k+2})
	\eqdef
	(t_1-t_0-\frac T2,\frac T2,t_2-t_1-\frac T2,\frac T2,
	\ldots,t_{k+1}-t_k-\frac T2,\frac T2)
\]
that we concatenate infinitely many times and thus define a periodic $(\delta,T/2)$-pseudo orbit.
By our assumption, this pseudo orbit is $\delta(\varepsilon)$-traced by a true orbit $(\psi_t(y))_t$, that is, there is a point $y$ and some increasing homeomorphism $\alpha\colon\bR\longrightarrow\bR$ satisfying $\alpha(0)=0$ such that for every $k$ and for every $t$ satisfying $s_k\le t<s_{k+1}$
\[
	d(\psi_{\alpha(t)}(y),\psi_{t}(\hat x_k))\le \min\{\delta(\varepsilon),\varepsilon_0/2\}\,.
\]
Since the flow is expansive, by the choice of $\varepsilon_0$ this bi-infinite tracing orbit $(\psi_t(y))_t$ is uniquely determined.
As the pseudo orbit is periodic, the tracing orbit must be closed, that is, we have $\psi_\tau(y)=y$ for some $\tau>0$.
By Lemma~\ref{lem:maisumdeTho} and the choice of $\delta(\varepsilon)$, as the shadowing orbit $(\psi_{\alpha(t)}(y))_t$ is close to pieces of orbits $(\psi_t(x_k))_t$, the period $\tau$ of the tracing periodic orbit must satisfy property 3) in the definition of the periodic orbit specification property.
This finishes the proof.
\end{proof}

Propositions~\ref{pro:prooop1} and~\ref{pro:camposs} together imply the following.

\begin{corollary}\label{k:corol1}
		If a continuous expansive topologically mixing flow without singular points on a compact metric space  has local product structure then the flow has the periodic orbit specification property.
\end{corollary}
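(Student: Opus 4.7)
The plan is to simply chain together the two preceding propositions, since the hypotheses of Corollary~\ref{k:corol1} are tailored precisely so that both can be applied in sequence. First I would invoke Proposition~\ref{pro:prooop1}: the flow is continuous, expansive, without singular points, on a compact metric space, and has local product structure, so it enjoys the pseudo orbit tracing property (with respect to time $a=1$).

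Next I would invoke Proposition~\ref{pro:camposs}: the flow is continuous, expansive, topologically mixing, without singular points, on a compact metric space, and by the previous step now also has the pseudo orbit tracing property. This yields the periodic orbit specification property, which is exactly the conclusion of the corollary.

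There is essentially no obstacle here: the corollary is a formal combination of the two previously established results. The only mild thing to keep in mind is that the definition of pseudo orbit tracing property used in Proposition~\ref{pro:camposs} is the one supplied by Proposition~\ref{pro:prooop1} (namely with time $a=1$, the unqualified notion introduced in the excerpt), so the two statements interface cleanly without the need for any intermediate lemma. Hence the proof consists of one sentence observing that Proposition~\ref{pro:prooop1} provides the missing hypothesis needed to apply Proposition~\ref{pro:camposs}.
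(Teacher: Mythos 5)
Your proof is correct and matches the paper exactly: the corollary is stated there as an immediate consequence of chaining Proposition~\ref{pro:prooop1} (expansive plus local product structure implies pseudo orbit tracing) with Proposition~\ref{pro:camposs} (expansive, topologically mixing, plus pseudo orbit tracing implies periodic orbit specification). Your remark about the time-$a=1$ convention aligning the two statements is a sensible sanity check but does not change the substance.
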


\subsection{Entropy}

A Borel probability measure is said to be \emph{invariant} under the flow $\psi_t\colon X\to X$ if it is $\psi_t$-invariant for every $t\in\bR$. Let $\cM$ denote the set of all flow-invariant Borel probability measures.
A set $Z\subset X$ is \emph{invariant} under the flow if $\psi_t(Z)=Z$ for every $t\in\bR$. A measure $\nu\in\cM$ is said to be \emph{ergodic}  if for every invariant set $Z\subset X$ we have either $\nu(Z)=0$ or $\nu(Z)=1$.

Given $\nu\in \cM$, we denote by $h_\nu(\psi_t)$ the \emph{metric entropy} with respect to the time-$t$ map $\psi_t$ (see~\cite{Wal:81} for its definition). By Abramov's formula~\cite{Abr:59}, we have
\[
	h_\nu(\psi_t)=\lvert t\rvert\,h_\nu(\psi_1)\quad\text{ for every }t.
\]	
One calls $h_\nu(\psi):=h_\nu(\psi_1)$ the \emph{metric entropy} of $\nu$ with respect to the flow $\psi=(\psi_t)_t$.

Given $\varepsilon>0$,  $T>0$, and $x\in X$, define
\[
	B(x,\varepsilon,T)\eqdef
	\{y\in X\colon d(\psi_s(y),\psi_s(x))< \varepsilon \text{ for every }s\in[0,T]\}\,.
\]	
Two points $x,y$ are called \emph{$(T,\varepsilon)$-separated}  if $y\notin B(x,\varepsilon,T)$. A set $E\subset X$ is \emph{$(T,\varepsilon)$-separated}  if every pair of distinct elements in $E$ has this property.
Given an invariant compact set  $Z\subset X$, denote by $M(T,\varepsilon,Z,\psi)$  the maximal cardinality of any $(T,\varepsilon)$-separated subset of $Z$.
The \emph{topological entropy} of $Z$ with respect to the flow $\psi$ is defined by
\begin{equation}\label{def:entropia}
	h(\psi,Z)\eqdef
	\lim_{\varepsilon\to0}\limsup_{T\to\infty}\frac{1}{T} \log
	M(T,\varepsilon,Z,\psi).
\end{equation}
We simply write $h(\psi)=h(\psi,X)$ and call this number the \emph{topological entropy} of the flow $\psi$.
 One can verify Abramov's formula also in the case of topological entropy
\[
	h(\psi_1,Z)=\frac{1}{\lvert t\rvert}h(\psi_t,Z) \quad\text{ for every }t\,.
\]
(see~\cite{Ito:69} or~\cite[Proposition 21]{Bow:71}) and, together with the variational principle (see~\cite{Wal:81}) denoting by $\cM(\psi_1,Z)$ the set of all $\psi_1$-invariant measures $\nu$ for which $\nu(Z)=1$, one has
\begin{equation}\label{varprinc}
	h(\psi,Z)= \sup_{\nu\in\cM(\psi_1,Z)}h_\nu(\psi_1)
	=h(\psi_1,Z)\,.
\end{equation}

In the following we will only work with the entropy of the time-1 map $\psi_1$.  A set of points $E$ is called \emph{$(n,\varepsilon)$-spanning} with respect to $\psi_1$ if $x, y\in E, x\ne y$, implies $d(\psi_k(x),\psi_k(y))>\varepsilon$ for some $k\in\{0,\ldots,n-1\}$.
Given $Z\subset X$ compact, let $N(n,\varepsilon,Z,\psi)$ be the maximal cardinality of a $(n,\varepsilon)$-spanning set $\{x_i\}_i\subset Z$. Recall that
\begin{equation}\label{eq:fisio}
	h(\psi_1,Z)= \lim_{\varepsilon\to0}\limsup_{n\to\infty}\frac{1}{n} \log
	N(n,\varepsilon,Z,\psi)	\,.
\end{equation}

Consider a continuous flow $\phi_{t}\colon Y\longrightarrow Y$ acting on a compact metric space $Y$ being time-preserving semi-conjugate to  the continuous flow $\psi_{t}\colon  X\longrightarrow X$ via $\chi\colon Y\to X$ by $\chi\circ\phi_t=\psi_t\circ\chi$ ($\psi_t$ is also called a \emph{factor} of $\phi_t$ and $\phi_t$ is called an \emph{extension} of $\psi_t$, of course we have in mind the quotient map introduced in Section~\ref{quotientstructure}). For $y\in Y$ consider the equivalence class
\[
	[y]\eqdef \{z\in Y\colon \chi(z)=\chi(y)\} = \chi^{-1}(\chi(y)).
\]
Note that $[y]$ is compact for every $y$. Recall that, given an $\phi$-invariant compact set $A\subset Y$ (and hence $\psi$-invariant compact set $\chi(A)\subset X$), by the common property of a factor and by~\cite[Theorem 17]{Bow:71} we have
\begin{equation}\label{eq:entsemconj}
	h(\psi_1,\chi(A))
	\le h(\phi_1,A)
	\le h(\psi_1,\chi(A)) + \sup_{x\in\chi(A)}h(\phi_1,\chi^{-1}(x)).
\end{equation}

\subsection{Maximal entropy measures}

An ergodic measure $\nu$ is a \emph{measure of maximal entropy} if it realizes the supremum in~\eqref{varprinc}.
Notice that such a measure always exists provided the entropy map $\nu\mapsto h_\nu(\psi_1)$ is upper semi-continuous~\cite{Wal:81}. This is the case if the flow is smooth~\cite{New:89} or if the map $\psi_1$ is $h$-expansive~\cite{Wal:81}.
For any axiom A flow the measure of maximal entropy is unique~\cite{Bow:73c}. In general, the continuous flows we want to study are not hyperbolic, so we have to rely on a more general result due to Franco~\cite{Fra:77} (see also Oka~\cite{Oka:97}).

In what follows let $\psi_t\colon X\to X$ be a continuous expansive flow on a compact metric space $X$ satisfying the periodic orbit specification property. The measure of maximal entropy  can be constructed explicitly from the distribution of periodic orbits as follows~\cite{Bow:72}. Let $\varepsilon>0$ be an expansivity constant. For every $T$ there is only a finite number of periodic orbits for $\psi$ with minimal period between $T-\varepsilon$ and $T+\varepsilon$, denote this family by $\Per(\psi,T-\varepsilon,T+\varepsilon)$. 
Denote by $\Per(\psi,T)$ the family of periodic orbits with minimal period less than or equal to $T$.
Denote by $\card$ the cardinality. Consider
\[	\hat\nu_{\psi,T}:=
	\frac{\sum_\gamma\nu_{\psi,\gamma}}
			{\card\Per(\psi,T-\varepsilon,T+\varepsilon)}
\]
with summation taken over all $\gamma\in\Per(\psi,T-\varepsilon,T+\varepsilon)$ and $\nu_{\psi,\gamma}$ denoting the  invariant probability measure supported on $\gamma$.
Note that
\begin{equation}\label{def:perorbmeanumbers}
	\card \Per(\psi,T-\varepsilon,T+\varepsilon) \le M(T,\varepsilon,X,\psi)
\end{equation}
(compare the proof of~\cite[Theorem 5]{kn:BW72}). We also consider
\[
	\nu_{\psi,T}:=
	\frac{\sum_\gamma\nu_{\psi,\gamma}}
			{\card\Per(\psi,T)}
\]
with summation taken over all $\gamma\in\Per(\psi,T)$.

\begin{proposition}[\cite{Fra:77}]\label{pp:Franco}
	Let $\psi_t\colon X\to X$ be a continuous expansive flow on a compact metric space satisfying the periodic orbit specification property.
	
	Then $\hat\nu_{\psi,T}$ and $\nu_{\psi,T}$ both converge in the weak$\ast$ topology to the unique measure of maximal entropy $\nu_\psi$ as $T\to\infty$. In particular, for $\varepsilon>0$ sufficiently small, we have
\[
	\lim_{T\to\infty}\frac1T\log\card\Per(\psi,T-\varepsilon,T+\varepsilon)
	=\lim_{T\to\infty}\frac1T\log\card\Per(\psi,T)
	=h(\psi)
	>0.
\]
\end{proposition}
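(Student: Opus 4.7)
The plan is to follow Bowen's classical strategy~\cite{Bow:72,Bow:73c}, adapted by Franco to the abstract expansive-with-specification framework~\cite{Fra:77}. Four items must be established: existence of a measure of maximal entropy, the exponential counting identities, uniqueness, and weak$\ast$ convergence. Existence is immediate since expansivity implies $h$-expansivity of $\psi_1$, hence upper semi-continuity of $\nu\mapsto h_\nu(\psi_1)$ on the compact convex set $\cM$, hence attainment of the supremum in~\eqref{varprinc}. Once uniqueness is proved, weak$\ast$ convergence of the whole sequence follows from the fact that every weak$\ast$ accumulation point is an invariant measure of maximal entropy.

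For the counting identities, the upper bound $\limsup_{T\to\infty}T^{-1}\log\card\Per(\psi,T-\varepsilon,T+\varepsilon)\le h(\psi)$ is immediate from~\eqref{def:perorbmeanumbers} and~\eqref{def:entropia}. For the matching lower bound, fix an expansivity constant $\varepsilon_0$, let $4\varepsilon<\varepsilon_0$, and let $T(\varepsilon)$ be the specification constant. For each $x$ in a maximal $(T,4\varepsilon)$-separated set $E\subset X$, applying the periodic orbit specification property to the single-point schedule $(x;\,t_0=0,\,t_1=T)$ produces a periodic orbit $\gamma_x$ of period within $\varepsilon$ of $T+T(\varepsilon)$ that $\varepsilon$-shadows $\{\psi_t(x)\colon 0\le t\le T\}$. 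Expansivity together with the $4\varepsilon$-separation of $E$ forces the map $x\mapsto\gamma_x$ to be at most $C(T)$-to-one with $C(T)$ polynomial in $T$ (two preimages can land on the same periodic orbit only as time-shifted versions of each other). Hence
\[
    \card\Per(\psi,T+T(\varepsilon)-\varepsilon,T+T(\varepsilon)+\varepsilon)\ge C(T)^{-1}\cdot M(T,4\varepsilon,X,\psi).
\]
Taking $\limsup T^{-1}\log$ and letting $\varepsilon\to 0$ yields the matching lower bound $h(\psi)$; the identity for $\Per(\psi,T)$ then follows by summing such windowed counts. Positivity $h(\psi)>0$ comes from the free concatenation, via specification, of any two distinct periodic orbits, which produces exponentially many distinct orbits.

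For weak$\ast$ convergence, I would first show that every accumulation point $\nu$ of $(\hat\nu_{\psi,T})_T$ (and of $(\nu_{\psi,T})_T$) is a maximal entropy measure. Invariance is clear since each summand is invariant. Take a finite partition $\xi$ with $\nu$-null boundary and mesh less than $\varepsilon_0/2$; expansivity implies $h_\nu(\psi_1)=h_\nu(\psi_1,\xi)$. A Misiurewicz-type computation, exploiting that distinct periodic orbits in the support of $\hat\nu_{\psi,T}$ occupy pairwise distinct atoms of $\bigvee_{k=0}^{\lfloor T\rfloor-1}\psi_1^{-k}\xi$ by expansivity and carry equal mass $1/\card\Per(\psi,T-\varepsilon,T+\varepsilon)$, yields $h_\nu(\psi_1,\xi)\ge\liminf_T T^{-1}\log\card\Per(\psi,T-\varepsilon,T+\varepsilon)=h(\psi)$.

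The main obstacle is uniqueness. Suppose by contradiction that $\mu_1\ne\mu_2$ are two ergodic maximal measures and choose a continuous $f$ with $\int f\,d\mu_1-\int f\,d\mu_2=3c>0$. A Katok-type argument based on Birkhoff's ergodic theorem yields, for every small $\eta>0$ and all large $T$, a $(T,\varepsilon)$-separated set $E_i\subset X$ of cardinality $\card E_i\ge e^{(h(\psi)-\eta)T}$ whose points have Birkhoff average of $f$ within $c$ of $\int f\,d\mu_i$. Applying the periodic orbit specification property to each word $\omega\in\{1,2\}^N$ and each choice $(x_k^{\omega})_k$ with $x_k^{\omega}\in E_{\omega_k}$ produces a periodic orbit of period approximately $N(T+T(\varepsilon))$ that shadows the concatenation. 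Expansivity together with the choice of $c$ (which keeps the empirical measures of distinct words separated) guarantees that distinct pairs $(\omega,(x_k^{\omega})_k)$ yield genuinely distinct periodic orbits, giving at least $2^N\cdot e^{N(h(\psi)-\eta)T}$ periodic orbits in a bounded window about period $N(T+T(\varepsilon))$. The upper bound from the counting identity bounds the same quantity above by $e^{(h(\psi)+\eta)N(T+T(\varepsilon))}$, and the extra factor $2^N$ cannot be absorbed as $\eta\to0$ and $T\to\infty$ appropriately. This contradiction forces $\mu_1=\mu_2$ and, combined with the previous paragraph, completes the proof.
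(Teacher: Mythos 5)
This proposition is cited in the paper as a result of Franco~\cite{Fra:77}; the paper itself offers no proof, so your proposal can only be judged against the known Bowen--Franco argument. Your treatment of existence, of the exponential counting identities (modulo the usual polynomial care with reparametrized time-shifts when passing from separated points to unparametrized periodic orbits), and of the fact that every weak$\ast$ accumulation point of $\hat\nu_{\psi,T}$ and $\nu_{\psi,T}$ is an invariant measure of maximal entropy all follow the expected outline and look sound.

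The genuine gap is in the uniqueness step, and it is quantitative: your Katok-plus-concatenation count does not produce a contradiction. For each word $\omega\in\{1,2\}^N$ and choice $(x_k^\omega)_k$ you get a periodic orbit of period $\approx N(T+T(\varepsilon))$, and you claim the lower count $2^N e^{N(h-\eta)T}$ beats the upper bound $e^{(h+\eta')N(T+T(\varepsilon))}$. Dividing logarithms by the period, the lower estimate gives a rate $\bigl(\log 2+(h-\eta)T\bigr)/\bigl(T+T(\varepsilon)\bigr)$, and for a contradiction you would need $\log 2>(\eta+\eta')T+(h+\eta')\,T(\varepsilon)$. But $T(\varepsilon)$ is a fixed positive overhead paid once per concatenation block, so this inequality fails as soon as $hT(\varepsilon)\ge\log 2$ and, more decisively, as soon as $T$ is large --- and Katok's entropy formula forces $T$ large. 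The specification gap per block swallows the $\log 2$ per block gained from the two-letter alphabet, so the factor $2^N$ \emph{is} absorbed. Bowen and Franco avoid this by proving a uniform Gibbs-type estimate $c_1 e^{-hT}\le\nu_\psi\bigl(B(x,\varepsilon,T)\bigr)\le c_2 e^{-hT}$ for the constructed measure and comparing it with the Brin--Katok/Shannon--McMillan--Breiman local entropy of an arbitrary ergodic maximal measure; mutual non-singularity plus ergodicity then gives coincidence. To salvage a counting-style argument you would need the two families $E_1,E_2$ to give an \emph{exponentially} larger, not merely $2^N$-larger, contribution --- for instance by working with the measures of the corresponding Bowen balls rather than bare cardinalities --- but as written the contradiction does not materialize. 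A secondary, minor point: the positivity $h(\psi)>0$ implicitly assumes the flow is not a single periodic orbit; this should be made explicit or derived from topological mixing, which holds in the paper's application but is not in the hypotheses of the proposition as stated.
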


Proposition~\ref{pp:Franco} and Corollary~\ref{k:corol1} hence imply the following.

\begin{corollary}\label{cor:uniquemmaxent}
		If a  continuous expansive  topologically mixing  flow without singular points  on a compact metric space  has local product structure then this flow has  a unique (hence ergodic) measure of maximal entropy and $\hat\nu_{\psi,T}$ and $\nu_{\psi,T}$ both converge in the weak$\ast$ topology to this measure as $T\to\infty$.
\end{corollary}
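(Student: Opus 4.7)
The plan is to simply chain together the two main results immediately preceding the corollary. The hypotheses of the statement (continuous, expansive, topologically mixing, without singular points, compact metric space, local product structure) are precisely those of Corollary~\ref{k:corol1}, which will supply the periodic orbit specification property.

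The first step is to invoke Corollary~\ref{k:corol1} to conclude that the flow $\psi_t$ has the periodic orbit specification property. This is a direct application, since the hypotheses match verbatim, and it is where the assumption of local product structure is used (through Proposition~\ref{pro:prooop1} to get the pseudo orbit tracing property, which then feeds into Proposition~\ref{pro:camposs} through topological mixing and expansivity).

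The second step is to apply Proposition~\ref{pp:Franco} (Franco's theorem) to the flow. Since $\psi_t$ is a continuous expansive flow on a compact metric space satisfying the periodic orbit specification property, the proposition yields directly the existence of a unique measure of maximal entropy $\nu_\psi$, together with the weak$\ast$ convergence $\hat\nu_{\psi,T}\to\nu_\psi$ and $\nu_{\psi,T}\to\nu_\psi$ as $T\to\infty$. Uniqueness immediately implies ergodicity, since any nontrivial ergodic decomposition of a measure of maximal entropy would produce distinct ergodic measures attaining the supremum in~\eqref{varprinc}, contradicting uniqueness.

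There is no real obstacle here: the corollary is essentially a repackaging of the two preceding results. If anything merits a brief comment, it is only to note that singular points do not appear in either step, which is why the ``without singular points'' hypothesis is carried through. The whole argument can be written in a few lines.
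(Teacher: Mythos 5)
Your proposal is correct and is exactly the route the paper takes: the paper states, in the line immediately preceding the corollary, that it follows from Proposition~\ref{pp:Franco} together with Corollary~\ref{k:corol1}, which is precisely the two-step chain (local product structure $\Rightarrow$ periodic orbit specification via Corollary~\ref{k:corol1}, then Franco's theorem for uniqueness and weak$\ast$ convergence) that you describe. Your remark on why uniqueness forces ergodicity is a standard observation already implicit in the paper's parenthetical ``(hence ergodic)'' and adds nothing problematic.
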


Consider a continuous flow $\phi_{t}\colon Y\longrightarrow Y$ acting on a compact metric space $Y$ and being time-preserving semi-conjugate to  the continuous flow $\psi_{t}\colon  X\longrightarrow X$ via a semi-conjugacy $\chi\colon Y\to X$. If $x$ is a periodic point for $\psi$ with period $\ell$ and $\gamma=\{\psi_t(x)\colon0\le t< \ell\}$ is its orbit then $[\beta]=\chi^{-1}(\gamma)$ is compact and $\phi$-invariant  and has period $\ell$. Hence we can pick a probability measure $\mu_{\phi,[\beta]}$ supported on $[\beta]$ which is invariant with respect to the flow  $\phi=(\phi_t)_t$.
We define
\[
	\mu_{\phi,T}\eqdef
	\frac{\sum_{[\beta]}\mu_{\phi,[\beta]}}{\card \Per(\phi,T)},
\]
with summation taken over all $[\beta]\in\Per(\phi,T)$.
The probability measure $\mu_{\phi,T}$ is $\phi$-invariant and, in particular, $\phi_1$-invariant.

We can now formulate~\cite[Theorem 1.5]{BuzFisSamVas:12} in our setting.

\begin{theorem}\label{the:Buzzietal}
	Let $\psi_t\colon X\to X$ be a continuous expansive flow on a compact metric space satisfying the periodic orbit specification property and denote by $\nu_\psi$ its unique (hence ergodic) Borel probability measure of maximal entropy. Let $\phi_{t}\colon Y\longrightarrow Y$ be a continuous flow time-preserving semi-conjugate to $\psi_t$  through some continuous surjective map $\chi\colon Y \longrightarrow X$ and assume that the following conditions are satisfied:
	\begin{enumerate}
		\item [(i)] $h(\phi_1,[y])=0$ for every $y\in Y$,\\[-0.4cm]
		\item [(ii)] $\nu_\psi(\{\chi(y)\colon [y]=\{y\}\})=1$.
	\end{enumerate}
Then $\mu_{\phi,T}$ converges in the weak$\ast$ topology as $T\to\infty$  to the unique ergodic Borel probability measure of maximal entropy for $\phi=(\phi_t)_t$.
\end{theorem}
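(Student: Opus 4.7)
The plan is to use hypotheses (i) and (ii), respectively, to force entropies to match across the semi-conjugacy and to produce a \emph{unique} measurable lift of $\nu_\psi$ to a $\phi$-invariant probability measure on $Y$. First I would upgrade the fiber-entropy inequality~\eqref{eq:entsemconj} from the topological to the measure-theoretic level: by the Ledrappier--Walters inequality (or, equivalently, a conditional Abramov--Rokhlin decomposition), for every $\phi_1$-invariant $\mu\in\cM$,
\[
    h_{\chi_*\mu}(\psi_1)\le h_\mu(\phi_1) \le h_{\chi_*\mu}(\psi_1)+\int h(\phi_1,\chi^{-1}(x))\,d(\chi_*\mu)(x).
\]
Under hypothesis (i) the integrand vanishes identically, hence $h_\mu(\phi_1)=h_{\chi_*\mu}(\psi_1)$ for every such $\mu$. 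Surjectivity of the push-forward $\chi_*\colon\cM\to\cM$ then yields $h(\phi_1)=h(\psi_1)$, and whenever $\mu$ is maximal for $\phi_1$ its projection $\chi_*\mu$ is maximal for $\psi_1$; by Corollary~\ref{cor:uniquemmaxent} applied to $\psi_t$ this forces $\chi_*\mu=\nu_\psi$.

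Next I would use hypothesis (ii) to produce the unique lift. Let $X_0:=\{x\in X:\chi^{-1}(x)\text{ is a singleton}\}$ and $Y_0:=\chi^{-1}(X_0)$. Both are $\phi$- and $\psi$-invariant Borel subsets, and the restriction $\chi\colon Y_0\to X_0$ is a continuous Borel bijection whose inverse $s\colon X_0\to Y_0$ is Borel by a standard selection theorem on compact metric spaces, and satisfies $s\circ\psi_t=\phi_t\circ s$ on $X_0$. Hypothesis (ii) gives $\nu_\psi(X_0)=1$, so $\mu_\phi:=s_*\nu_\psi$ is a well-defined $\phi$-invariant ergodic probability measure on $Y$ with $\chi_*\mu_\phi=\nu_\psi$; conversely, any $\phi$-invariant $\mu$ with $\chi_*\mu=\nu_\psi$ is concentrated on $Y_0$ and hence equals $\mu_\phi$. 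Combined with the previous step this establishes that $\mu_\phi$ is the unique ergodic measure of maximal entropy for $\phi_t$.

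The convergence $\mu_{\phi,T}\to\mu_\phi$ I would prove as follows. The semi-conjugacy $\chi$ sets up a period-preserving bijection between $\Per(\phi,T)$ and $\Per(\psi,T)$ via $[\beta]=\chi^{-1}(\gamma)$; since each individual periodic orbit $\gamma\in\Per(\psi,T)$ carries a \emph{unique} $\psi$-invariant probability measure, one has $\chi_*\mu_{\phi,[\beta]}=\nu_{\psi,\gamma}$ irrespective of the choice of the invariant measure $\mu_{\phi,[\beta]}$ on $[\beta]$. Summing and normalizing, $\chi_*\mu_{\phi,T}=\nu_{\psi,T}$, and by Proposition~\ref{pp:Franco} the right-hand side converges weak$*$ to $\nu_\psi$ as $T\to\infty$. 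Weak$*$ compactness of $\cM$ then implies that every accumulation point $\mu$ of $(\mu_{\phi,T})_T$ is a $\phi$-invariant probability measure with $\chi_*\mu=\nu_\psi$; by the uniqueness of the lift established above, $\mu=\mu_\phi$. As $\mu_\phi$ is the only accumulation point, the whole sequence converges weak$*$ to $\mu_\phi$.

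The main obstacle I anticipate is the strengthening of~\eqref{eq:entsemconj} to measure-theoretic entropy under hypothesis (i). The displayed topological inequality alone gives only $h(\phi_1)=h(\psi_1)$ and does \emph{not} force $\chi_*\mu=\nu_\psi$ for every maximal $\mu$, yet that conclusion is the backbone of the uniqueness argument. Establishing the fiber-wise Ledrappier--Walters bound (with topological fiber entropy as error term) and verifying that it vanishes under (i) is the technical heart of the proof; once this is granted, the Borel selection coming from (ii) and the periodic-orbit convergence through $\nu_{\psi,T}\to\nu_\psi$ are essentially mechanical.
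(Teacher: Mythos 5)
The paper does not prove this theorem itself; it is a flow reformulation of \cite[Theorem~1.5]{BuzFisSamVas:12}, which the authors simply invoke. There is therefore no in-paper proof to compare against, so I evaluate your argument on its own.

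Your proof is correct and follows the strategy one would expect (and which, to my understanding, is essentially that of Buzzi--Fisher--Sambarino--V\'asquez). The three steps are all sound. In Step~1 you correctly diagnose that the paper's Bowen-type inequality \eqref{eq:entsemconj} is not enough and that the Ledrappier--Walters formula, applied to the time-$1$ maps $\phi_1$, $\psi_1$ and the factor map $\chi$, is what is actually needed; together with the elementary factor inequality $h_\mu(\phi_1)\ge h_{\chi_*\mu}(\psi_1)$ this gives $h_\mu(\phi_1)=h_{\chi_*\mu}(\psi_1)$ for all invariant $\mu$, hence $h(\phi_1)=h(\psi_1)$ and that every ergodic maximal $\mu$ pushes forward to $\nu_\psi$. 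In Step~2 the set $X_0$ of singleton-fiber points is $G_\delta$ (upper semicontinuity of $x\mapsto\chi^{-1}(x)$ on the compact space $Y$), $Y_0=\chi^{-1}(X_0)$ is Borel, and by Lusin--Souslin $\chi|_{Y_0}$ is a Borel isomorphism onto $X_0$ with Borel inverse $s$ intertwining the flows; hypothesis~(ii) gives $\nu_\psi(X_0)=1$, so $\mu_\phi:=s_*\nu_\psi$ is the unique $\phi$-invariant measure projecting to $\nu_\psi$, and by Step~1 it is the unique ergodic MME. Step~3 is clean: $\chi_*\mu_{\phi,T}=\nu_{\psi,T}\to\nu_\psi$ by Proposition~\ref{pp:Franco}, and weak$*$-compactness plus the uniqueness of the lift force every accumulation point of $(\mu_{\phi,T})_T$ to equal $\mu_\phi$. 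The only thing I would emphasize in a write-up is the Borel measurability bookkeeping in Step~2, which you correctly flag but compress into ``a standard selection theorem''; the Lusin--Souslin route sketched above is the cleanest way to make it precise.
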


\subsection{Lyapunov exponents}

We continue to assume that $(M,g)$ is a compact surface without focal points.
Lyapunov exponents and associated vector bundles provided by the Oseledec multiplicative ergodic theorem are in this setting conveniently described by orthogonal stable and unstable Jacobi fields (see~\cite{GalHulLaf:93} for more details). 
A Jacobi field $J$ along a geodesic $\gamma_\theta$, $\theta\in TM$, with $\gamma_\theta'(0)=\theta$ is uniquely determined by its initial conditions $J(0),J'(0)\in T_pM$.  
The set of \emph{orthogonal Jacobi fields} is the one such that $J(0)$ and $J'(0)$ are orthogonal to $v$ and are exactly those such that for every $t\in \bR$, $J(t)$ is normal to $\gamma_\theta'(t)$.
Given a vector $\xi\in T_\theta T_1M$, we denote by $J_\xi$ the Jacobi field such that $J_\xi(0)$ and $J_\xi'(0)$ are the horizontal and the vertical component of $\xi$, respectively. Recall that with this notation the linearized geodesic flow acts in the way that  for every $t\in\bR$, $J_\xi(t)$ and $J_\xi'(t)$ are the horizontal and the vertical component of $D\phi_t(\xi)$, respectively. 
An (orthogonal) Jacobi field $J$ is called \emph{stable} (respectively, \emph{unstable}) if $\lVert J(t)\rVert$ is uniformly bounded for all $t\ge0$ (respectively, uniformly bounded for all $t\le0$). We denote by $J^\s$ (respectively, $J^\u$) the set of stable (respectively, unstable) orthogonal Jacobi fields and consider the subspaces 
\[
	E^\s_\theta\eqdef \{\xi\in T_\theta T_1M\colon J_\xi\in J^\s\},\quad
	E^\u_\theta\eqdef \{\xi\in T_\theta T_1M\colon J_\xi\in J^\u\}.
\]
In our setting each such subspace is one-dimensional. 
The distributions $E^\s\colon \theta\in T_1M\mapsto E^\s_\theta\in T_\theta T_1M$ and  $E^\u\colon \theta\in T_1M\mapsto E^\u_\theta\in T_\theta T_1M$ are invariant under the linearized geodesic flow and continuous. 
The bundles $E^\s_\theta$ and $E^\u_\theta$ may not be  linearly independent. They generate the space of orthogonal Jacobi fields along a geodesic if, and only if, the curvature does not vanish everywhere along this geodesic.  In fact, the geodesic flow is an Anosov flow precisely if, and only if,  these bundles are linearly independent at \emph{every} point of the unit tangent bundle ~\cite{Ebe:73}.

We denote by $\cR$ the subset of vectors $\theta\in T_1M$ such that  $E^\s_\theta$ and $E^\u_\theta$ are linearly independent, this set is open and invariant under $(\phi_t)_t$ (a vector in $\cR$ is also said to have \emph{rank $1$}). The complementary set $\cH\subset T_1M$ is closed and invariant. Based on the quotient map $\chi$ we introduce in Section~\ref{quotientstructure}, for the corresponding equivalence classes  we have  
\begin{equation}\label{RR0inclusion}
\cR\subset\cR_0\eqdef \big\{\theta\in T_1M\colon [\theta]=\{\theta\}\big\}.
\end{equation}

Let $\lambda(\theta)$ denote the \emph{Lyapunov exponent} at $\theta$ associated to the linearization of the geodesic flow and its action on the  subbundle $E^\u$
\[
	\lambda(\theta)
	\eqdef \lim_{t\to\pm\infty}\frac1t\log\,\lVert D\phi_t|_{E^\u_\theta}\rVert,
\]
whenever both limits exist and are equal, where $\lVert\cdot\rVert$ denotes the norm induced by the Sasaki metric on $TM$. The Lyapunov exponent $\lambda(\theta)$ is well defined for any so-called \emph{Lyapunov regular} vector $\theta$. By the above facts, in our setting, $\lambda(\theta)$ can in fact be computed as the Birkhoff average of a continuous function which is the instantaneous rate of expansion caused by the derivative of the geodesic flow acting on the subspace $E^\u_\theta$. It vanishes on $\cH$ because unstable Jacobi fields are covariantly constant along geodesics tangent to vectors in $\cH$. 

The set of Lyapunov regular vectors is of full measure with respect to any invariant Borel probability measure. It coincides at almost every point with the nonnegative Lyapunov exponent provided by the Oseledec decomposition.
Ruelle's inequality~\cite{Rue:78} asserts that for all $\phi$-invariant probability measures $\mu$ we have
\begin{equation}\label{eq:ruelllle}
	h_\mu(\phi_1)\le \lambda(\mu)\eqdef \int_{T_1M}\lambda(\theta)\,d\mu(\theta).
\end{equation}

Let $\widetilde m$ be the Liouville measure $m$ restricted to $\cR$ and normalized to obtain a probability measure. Ergodicity of $\widetilde m$ was proved in~\cite{kn:Pesin}. In all known examples $\widetilde m$ coincides with the Liouville measure, but this has not been proved in general. Since $\widetilde m$ is absolutely continuous and has positive density throughout $\cR$ we have $h_{\widetilde m}(\phi_1)=\lambda(\widetilde m)>0$, by Pesin's formula~\cite{Pes:77b}. Thus, the variational principle for topological entropy~\eqref{varprinc} implies 
\[
	h(\phi_1)
	\ge h_{\widetilde m}(\phi_1)
	=\lambda(\widetilde m)>0.
\]
The following is an immediate consequence of Ruelle's inequality~\eqref{eq:ruelllle} and the variational principle for entropy \eqref{varprinc} (recalling that $\cH$ is closed and invariant).

\begin{lemma}\label{lem:zeroentropy}
	We have $h_\mu(\phi_1)=0$ for every invariant probability measure $\mu$ satisfying $\mu(\cH)=1$. We have $h(\phi_1,\cH)=0$.
\end{lemma}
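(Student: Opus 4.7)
Both assertions are essentially corollaries of the facts already laid out in the paragraphs immediately preceding the lemma, so the plan is to make that chain of implications explicit rather than to introduce any new machinery.

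First I would verify that $\lambda(\theta)=0$ at every Lyapunov regular $\theta\in\cH$. By definition, $\theta\in\cH$ means that $E^\s_\theta$ and $E^\u_\theta$ fail to be linearly independent, and the text notes (on the line introducing $\cR$ and $\cH$) that in this situation the unstable Jacobi fields along $\gamma_\theta$ are covariantly constant. A covariantly constant Jacobi field has constant norm, so $\lVert J_\xi(t)\rVert$ is bounded above and below by positive constants for any $\xi\in E^\u_\theta$; hence $\lVert D\phi_t|_{E^\u_\theta}\rVert$ grows subexponentially and $\lambda(\theta)=0$.

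Next, given any $\phi$-invariant Borel probability measure $\mu$ with $\mu(\cH)=1$, Oseledec's theorem guarantees that Lyapunov regular vectors form a set of full $\mu$-measure, so by the previous step $\lambda(\theta)=0$ for $\mu$-a.e.\ $\theta$. Integrating gives $\lambda(\mu)=0$, and Ruelle's inequality \eqref{eq:ruelllle} then yields
\[
h_\mu(\phi_1)\le \lambda(\mu)=0,
\]
proving the first statement.

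Finally, since $\cH$ is closed and $\phi$-invariant, the variational principle \eqref{varprinc} applies and gives
\[
h(\phi_1,\cH)=\sup_{\nu\in\cM(\phi_1,\cH)}h_\nu(\phi_1),
\]
where $\cM(\phi_1,\cH)$ is the set of $\phi_1$-invariant probability measures supported on $\cH$. Each such $\nu$ satisfies $\nu(\cH)=1$, so by the first part every summand of the supremum vanishes, and $h(\phi_1,\cH)=0$. There is really no obstacle here: the only delicate point is the reduction $\lambda|_\cH\equiv 0$, which the paper has already indicated and which follows from the covariant constancy of unstable Jacobi fields on $\cH$.
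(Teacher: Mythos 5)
Your proof is correct and matches the paper's intended argument exactly; the paper simply labels the lemma ``an immediate consequence of Ruelle's inequality and the variational principle (recalling that $\cH$ is closed and invariant)'' and your write-up unpacks precisely that chain, including the preliminary observation (already stated in the text) that $\lambda$ vanishes on $\cH$ because unstable Jacobi fields there are covariantly constant.
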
	

\subsection{Small entropy on strips}

We now return to our setting in the rest of the paper. In this section let $\phi_t\colon T_1M\to T_1M$ be the geodesic flow of a  $C^{\infty}$ compact connected boundaryless surface $(M,g)$ without focal points and of genus greater than one.
By Proposition~\ref{prop:semiconjugation}, $\phi=(\phi_t)_t$ is  time-preserving semi-conjugate to the quotient flow $\psi=(\psi_t)_t$ from Definition~\ref{def:quotientflow} through the quotient map $\chi\colon T_1M\to X$. 
We will need the following lemma.

\begin{lemma}\label{lem:lembowlocl}
	$h(\phi_1,[\theta])=0$ for any $\theta\in T_1M$.
\end{lemma}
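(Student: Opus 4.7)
The plan is to analyze the class $[\theta] = \chi^{-1}(\chi(\theta))$ directly and show that the geodesic flow is equicontinuous on it, which immediately forces zero entropy via the separated-set definition~\eqref{def:entropia}. I would split into two cases according to whether $\theta$ is expansive.

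If $\theta$ is expansive, then by the definition of the equivalence relation $\sim$ one has $[\theta] = \{\theta\}$. A single point is trivially $(n,\varepsilon)$-separated by itself alone, hence $M(n,\varepsilon,[\theta],\phi) = 1$ for all $n$, so $h(\phi_1,[\theta]) = 0$.

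If $\theta$ is non-expansive, fix any lift $\bar\theta \in T_1\tilde M$ and write $[\theta] = \bar\pi(\mathcal I(\bar\theta))$, where $\mathcal I(\bar\theta) = \tilde{\mathscr F}^\s(\bar\theta)\cap\tilde{\mathscr F}^\u(\bar\theta)$ is the set of unit vectors over $I(\bar\theta) = H^+(\bar\theta)\cap H^-(\bar\theta)$, all pointing in the strip direction (they are the restrictions of both gradient fields $-\nabla b_{\bar\theta}^\pm$ to $I(\bar\theta)$, which coincide on $I(\bar\theta)$). By Theorem~\ref{flatstrip} and Lemma~\ref{width}, the strip $F(\bar\theta)$ is isometric to a Euclidean strip of width at most $Q$, and the geodesic flow $\phi_t$ acts on this strip as translation in the strip direction. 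Consequently, the restriction of $\phi_t$ to $\mathcal I(\bar\theta)$ is a Sasaki isometry onto $\mathcal I(\phi_t(\bar\theta))$, and $\mathcal I(\bar\theta)$ itself has Sasaki diameter at most $Q$.

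Since $\bar\pi \colon T_1\tilde M \to T_1 M$ is a Riemannian covering, it does not increase Sasaki distance. Therefore, for every $\eta_1,\eta_2 \in [\theta]$ (lifted compatibly inside $\mathcal I(\bar\theta)$) and every $t \in \bR$,
\[
	d_S(\phi_t(\eta_1),\phi_t(\eta_2)) \le d_S(\phi_t(\bar\eta_1),\phi_t(\bar\eta_2)) = d_S(\bar\eta_1,\bar\eta_2) \le Q.
\]
This uniform bound means that $\phi$ is equicontinuous on $[\theta]$: for any $\varepsilon > 0$, an $(n,\varepsilon)$-separated subset of $[\theta]$ is in fact an $\varepsilon$-separated subset of $[\theta]$ (in the initial Sasaki metric). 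Since $[\theta]$ is compact, the cardinality of any such set is bounded independently of $n$, so $\limsup_n \frac{1}{n}\log M(n,\varepsilon,[\theta],\phi) \le 0$ and hence $h(\phi_1,[\theta]) = 0$.

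The main subtlety is step three, namely the isometric character of $\phi_t$ on $\mathcal I(\bar\theta)$. This requires that every vector in $\mathcal I(\bar\theta)$ be parallel in the strip direction (so that parallel transport in the flat strip keeps them unchanged) and that the ambient Sasaki metric restricts correctly to the totally geodesic flat strip; both follow from the characterization of $\mathcal I(\bar\theta)$ as the intersection of the level-zero horospheres and from the rigidity in Theorem~\ref{flatstrip}. Once this is in hand, the rest is a routine application of the separated-set definition of topological entropy.
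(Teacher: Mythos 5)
Your proof is correct and rests on the same key observation as the paper's: for non-expansive $\theta$, the class $[\theta]$ lifts to the cross-sectional arc $\mathcal I(\bar\theta)$ of a flat strip, and by the Flat Strip Theorem the geodesic flow moves points in this arc rigidly, preserving their pairwise distances. You phrase this as a Sasaki isometry of $\mathcal I(\bar\theta)$ onto $\mathcal I(\phi_t(\bar\theta))$ and run a separated-set count via~\eqref{def:entropia}, whereas the paper argues dually with spanning sets via~\eqref{eq:fisio} (a $(1,\delta)$-spanning set of $[\theta]$ remains $(n,\delta)$-spanning for all $n$). These are interchangeable formulations of the same equicontinuity argument, so the route is essentially identical, with your write-up simply being somewhat more explicit about the role of the lift to $T_1\tilde M$ and the isometric action of the flow on the strip cross-section.

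One small stylistic caution: after the displayed inequality you conclude that an $(n,\varepsilon)$-separated subset of $[\theta]$ is $\varepsilon$-separated downstairs, but what the chain of inequalities actually yields is that the \emph{lifts} $\bar\eta_1,\bar\eta_2\in\mathcal I(\bar\theta)$ are $\varepsilon$-separated in $T_1\tilde M$. This is harmless --- the compact arc $\mathcal I(\bar\theta)$ of Sasaki diameter at most $Q$ only supports finitely many $\varepsilon$-separated points, and $\bar\pi$ restricted to $\mathcal I(\bar\theta)$ is injective onto $[\theta]$, so the cardinality bound transfers --- but it is worth stating the $n$-independent bound on the lifted configuration to keep the logic airtight.
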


\begin{proof}
	The result is trivial for every $\theta$ for which $[\theta]=\{\theta\}$.
	
Assume that $[\theta]$ contains a point $\eta=(q,w)\ne\theta$. Then $\chi(\eta)=\chi(\theta)$ and by Definition~\ref{def:quotientflow} we have  $\eta\in {\mathscr  F}^{\s}(\theta)$. If $\tilde{\mathscr  F}^{\ss}(\bar\theta)$ is any lift of ${\mathscr  F}^{\s}(\theta)$ in $T_1\tilde M$ and $\bar\eta\in\tilde {\mathscr  F}^{\ss}(\bar\theta)$ a lift of $\eta$ then the geodesics $\gamma_{\bar\theta},\gamma_{\bar\eta}$ are bi-asymptotic and by Theorem~\ref{flatstrip} bound a nontrivial strip in $\tilde M$ and stay in constant distance for every $t \in {\mathbb R}$.
Hence, for every $\delta>0$ a $(1,\delta)$-spanning set of $[\theta]$ is $(n,\delta)$-spanning for every $n$. Hence, in particular $N(n,\delta,[\theta],\phi)\le N(1,\delta,[\theta],\phi)$ and thus $h(\phi_1,[\theta])=0$ by~\eqref{eq:fisio}.
\end{proof}

Now Lemma~\ref{lem:lembowlocl} and~\eqref{eq:entsemconj} together imply the following.
\begin{lemma}\label{lem:projection}
	For any compact invariant set $Z\subset T_1M$  we have $h(\phi_1,Z)=h(\psi_1,\chi(Z))$.
\end{lemma}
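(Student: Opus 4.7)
The statement is essentially a direct combination of the two preceding facts, so the plan is to invoke Bowen's entropy formula for factors together with the vanishing of entropy on equivalence classes. First I would observe that $Z$ being compact and $\phi$-invariant (in particular $\phi_1$-invariant) makes it the kind of set to which the inequality~\eqref{eq:entsemconj} applies: the image $\chi(Z)$ is compact because $\chi$ is continuous and $Z$ is compact, and it is $\psi_1$-invariant by the time-preserving semi-conjugacy $\chi\circ\phi_t=\psi_t\circ\chi$ (Proposition~\ref{prop:semiconjugation}).

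Applying~\eqref{eq:entsemconj} to $A=Z$ then gives
\[
	h(\psi_1,\chi(Z))\le h(\phi_1,Z)
	\le h(\psi_1,\chi(Z))+\sup_{x\in\chi(Z)}h(\phi_1,\chi^{-1}(x)).
\]
The lower bound is one half of the claim. For the upper bound I would note that for every $x\in\chi(Z)$, picking any $\theta\in T_1M$ with $\chi(\theta)=x$ one has $\chi^{-1}(x)=[\theta]$ by the very definition of the quotient. Lemma~\ref{lem:lembowlocl} therefore yields $h(\phi_1,\chi^{-1}(x))=h(\phi_1,[\theta])=0$ for every such $x$, so the supremum in the displayed inequality vanishes and the two bounds collapse to the desired equality.

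There is no real obstacle here beyond correctly identifying the fibers of $\chi$ with equivalence classes of $\sim$; the heavy lifting has already been done in Lemma~\ref{lem:lembowlocl}, whose proof exploits that a nontrivial class $[\theta]$ consists of points lying in a flat strip (Theorem~\ref{flatstrip}) and therefore has the property that nearby orbits within $[\theta]$ remain at a bounded (in fact, by equidistance of horospheres, essentially constant) distance for all time, so no exponential separation occurs inside a fiber. Consequently the only substantive input needed beyond the quoted Bowen-type inequality is that the quotient genuinely collapses the non-expansive directions of the flow, which is exactly what Definition~\ref{def:quotientflow} encodes.
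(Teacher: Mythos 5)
Your proposal is correct and follows exactly the argument the paper has in mind: the lemma is stated as an immediate consequence of the Bowen-type inequality~\eqref{eq:entsemconj} applied to $A=Z$ together with Lemma~\ref{lem:lembowlocl}, which is precisely the combination you use. The identification of fibers $\chi^{-1}(x)$ with equivalence classes $[\theta]$ and the observation that $\chi(Z)$ is compact and $\psi_1$-invariant are the only routine checks needed, and you carry them out correctly.
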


By Theorem \ref{expansivity} and Corollary \ref{cor:uniquemmaxent}, the flow $\psi=(\psi_t)_t$ has a unique  (hence ergodic) measure $\nu_\psi$ of maximal (positive) entropy. 

\begin{lemma}\label{lem:bigmeasure}
	We have $\nu_\psi(\{\chi(\theta)\colon[\theta]=\{\theta\}\})=1$.
\end{lemma}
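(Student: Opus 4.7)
The plan is to show that the complement of $\chi(\cR_0)$ (where $\cR_0=\{\theta\colon [\theta]=\{\theta\}\}$) has zero measure by exhibiting it inside a set on which the restriction of $\psi_1$ has topological entropy zero, and then invoking ergodicity of $\nu_\psi$ together with the strict positivity of the topological entropy.

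First, I would observe that by the inclusion $\cR\subset\cR_0$ in~\eqref{RR0inclusion}, every equivalence class that fails to be a singleton must meet $\cH=T_1M\setminus\cR$. Consequently,
\[
	X\setminus\{\chi(\theta)\colon [\theta]=\{\theta\}\}\;\subset\;\chi(\cH).
\]
Since $\cH$ is closed and $\phi$-invariant, $\chi(\cH)$ is a compact $\psi$-invariant (and hence Borel) subset of $X$. It therefore suffices to show that $\nu_\psi(\chi(\cH))=0$.

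Next I would assemble the two entropy ingredients. On the one hand, Lemma~\ref{lem:zeroentropy} gives $h(\phi_1,\cH)=0$, and Lemma~\ref{lem:projection} applied to the compact invariant set $\cH$ yields
\[
	h(\psi_1,\chi(\cH))=h(\phi_1,\cH)=0.
\]
On the other hand, the normalized Liouville measure $\widetilde m$ on $\cR$ satisfies $h_{\widetilde m}(\phi_1)=\lambda(\widetilde m)>0$ by Pesin's formula, so by the variational principle $h(\phi_1)>0$, and a second application of Lemma~\ref{lem:projection} (with $Z=T_1M$) gives $h(\psi_1)=h(\phi_1)>0$. By Corollary~\ref{cor:uniquemmaxent} the measure $\nu_\psi$ is the unique (hence ergodic) measure of maximal entropy for $\psi$, so $h_{\nu_\psi}(\psi_1)=h(\psi_1)>0$.

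Finally I would close the argument by ergodicity. Since $\chi(\cH)$ is $\psi$-invariant and $\nu_\psi$ is ergodic, $\nu_\psi(\chi(\cH))\in\{0,1\}$. Were $\nu_\psi(\chi(\cH))=1$, then $\nu_\psi\in\cM(\psi_1,\chi(\cH))$ and the variational principle~\eqref{varprinc} together with the first ingredient above would force
\[
	h_{\nu_\psi}(\psi_1)\le h(\psi_1,\chi(\cH))=0,
\]
contradicting $h_{\nu_\psi}(\psi_1)>0$. Hence $\nu_\psi(\chi(\cH))=0$, and the inclusion established in the first step yields $\nu_\psi(\{\chi(\theta)\colon [\theta]=\{\theta\}\})=1$, as claimed. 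The main conceptual step is the reduction to $\chi(\cH)$ via~\eqref{RR0inclusion}; once that is available, the proof is essentially a clean application of the variational principle plus ergodicity, with all the heavy lifting already done by the preceding lemmas.
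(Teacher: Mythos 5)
Your proof is correct and follows essentially the same route as the paper's: reduce to showing $\nu_\psi(\chi(\cH))=0$ via the inclusion $\cR\subset\cR_0$, then combine ergodicity of $\nu_\psi$, the positivity of $h(\psi_1)$, $h(\phi_1,\cH)=0$ from Lemma~\ref{lem:zeroentropy}, and Lemma~\ref{lem:projection} to reach a contradiction through the variational principle. The only cosmetic difference is that the paper phrases the contradiction as deducing $h(\phi_1,\cH)>0$ whereas you deduce $h_{\nu_\psi}(\psi_1)\le 0$; these are two sides of the same variational-principle argument.
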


\begin{proof}
	With~\eqref{RR0inclusion} we have $\{\chi(\theta)\colon[\theta]=\{\theta\}\}=\chi(\cR_0)$. Note that $\cR_0$ and its complementary set $\cH\setminus\cR_0$ are $\phi$-invariant and hence $\chi(\cR_0)$ and $\chi(\cH\setminus\cR_0)$ are $\psi$-invariant.
	
	By contradiction, suppose that $\nu_\psi(\chi(\cR_0))<1$. Hence, by ergodicity, $\nu_\psi(\chi(\cH\setminus\cR_0))=1$ which implies $\nu_\psi(\chi(\cH))=1$. Since $\cH$ is compact and invariant, by the variational principle~\eqref{varprinc} we then have $h(\psi_1,\chi(\cH))>0$.
Hence, by Lemma~\ref{lem:projection} we obtain $h(\phi_1,\cH)>0$. This is however in contradiction with Lemma~\ref{lem:zeroentropy}.
\end{proof}

We are now ready to give the proof of Theorem~\ref{maximalentropy}.

\begin{proof}[Proof of Theorem~\ref{maximalentropy}]
Apply Theorem~\ref{the:Buzzietal} using Lemmas~\ref{lem:lembowlocl} and ~\ref{lem:bigmeasure}.
\end{proof}

For completeness, we formulate the following result. For compact rank one surfaces (in fact, for rank one manifolds of any dimension) this is contained in~\cite[Corollary 6.2]{Kni:98}. Based on the semi-conjugacy, using the above introduced notation for $T>0$ the set $\Per_\cR(T):=\Per(\phi,T)\cap\cR$ is the set of primitive periodic orbits $\gamma$ in $\cR$ of period $\ell(\gamma)\le T$. 

\begin{corollary}\label{corollary611}
	Let $\phi_t\colon T_1M\to T_1M$ be the geodesic flow of a  $C^{\infty}$ compact connected boundaryless surface $(M,g)$ without focal points and of genus greater than one.
We have $0=h(\phi_1,\cH)<h(\phi_1)$ and
\[
	\lim_{T\to\infty}\frac 1T\log \#\Per_{\cR}(T)=h(\phi_1)\,.
\]
\end{corollary}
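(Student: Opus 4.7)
The plan is to split the statement into its two pieces and handle them separately, leaning on the accumulated machinery of the preceding sections.

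For the first assertion $0=h(\phi_1,\cH)<h(\phi_1)$, the equality is already Lemma~\ref{lem:zeroentropy}, and the strict inequality is immediate from the paragraph preceding that lemma, where Pesin's formula yields $h(\phi_1)\ge h_{\widetilde m}(\phi_1)=\lambda(\widetilde m)>0$. So nothing new needs to be proved here.

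For the second assertion, I would count primitive periodic orbits through the semi-conjugacy $\chi$. First, Theorem~\ref{expansivity} certifies that the quotient flow $\psi$ is expansive, topologically mixing, without singular points, and has local product structure, so Corollary~\ref{k:corol1} yields the periodic orbit specification property for $\psi$ and Proposition~\ref{pp:Franco} then gives
\[
	\lim_{T\to\infty}\tfrac1T\log\#\Per(\psi,T)=h(\psi_1),
\]
while Lemma~\ref{lem:projection} with $Z=T_1M$ upgrades this to $h(\psi_1)=h(\phi_1)$. Next, using $\cR\subset\cR_0$ from~\eqref{RR0inclusion}, every $\theta\in\cR$ satisfies $[\theta]=\{\theta\}$, so $\chi$ is injective on $\cR$; together with $\cR\cap\cH=\emptyset$, this forces $\chi(\cR)$ and $\chi(\cH)$ to partition $X$ into two disjoint $\psi$-invariant sets, and the time-preserving property (Proposition~\ref{prop:semiconjugation}) makes $\chi$ into a period-preserving bijection between $\Per_\cR(T)$ and the set of primitive periodic $\psi$-orbits contained in $\chi(\cR)$. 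This yields the clean decomposition
\[
	\#\Per(\psi,T)=\#\Per_\cR(T)+\#\Per_{\chi(\cH)}(\psi,T).
\]

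The remaining step, and the only real obstacle, is to control the second summand. The set $\chi(\cH)$ is compact, $\psi$-invariant, and satisfies $h(\psi_1,\chi(\cH))=h(\phi_1,\cH)=0$ by Lemmas~\ref{lem:projection} and~\ref{lem:zeroentropy}. I would adapt the argument behind~\eqref{def:perorbmeanumbers} to the restriction of $\psi$ to $\chi(\cH)$: each primitive periodic orbit of $\psi$ in $\chi(\cH)$ whose minimal period lies in a window $[T-\varepsilon,T+\varepsilon]$ contributes at most one point to any $(T,\varepsilon)$-separated subset of $\chi(\cH)$, and summing the resulting bound $M(T,\varepsilon,\chi(\cH),\psi)$ over $O(T/\varepsilon)$ such windows gives
\[
	\limsup_{T\to\infty}\tfrac1T\log\#\Per_{\chi(\cH)}(\psi,T)\le h(\psi_1,\chi(\cH))=0.
\]
Since $h(\phi_1)>0$ by the first part, this contribution is subexponentially negligible in the decomposition above, so $\#\Per_\cR(T)$ must absorb the full exponential growth rate of $\#\Per(\psi,T)$, yielding $\lim_{T\to\infty} T^{-1}\log\#\Per_\cR(T)=h(\phi_1)$. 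The only mildly technical point is the window-summation bounding periodic orbits in $\chi(\cH)$ by the topological entropy on that set, and it is a standard estimate for expansive flows.
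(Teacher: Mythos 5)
Your proposal is correct and follows essentially the same route as the paper: identify $h(\phi_1)=h(\psi_1)$ via Lemma~\ref{lem:projection}, invoke Proposition~\ref{pp:Franco} for the periodic orbit count of $\psi$, bound the contribution of orbits in $\chi(\cH)$ by combining $h(\psi_1,\chi(\cH))=0$ with the separated-set estimate behind~\eqref{def:perorbmeanumbers}, and transfer the remainder to $\Per_\cR(T)$ using that $\chi$ is a parametrization-preserving bijection on $\cR$. The only difference is presentational — the paper argues window-by-window with $\Per(\psi,T-\varepsilon,T+\varepsilon)\cap\chi(\cH)$ directly, while you state the cumulative partition $\#\Per(\psi,T)=\#\Per_\cR(T)+\#\Per_{\chi(\cH)}(\psi,T)$ and sum over windows to control the second term — but the underlying lemmas and the logic are identical.
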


\begin{proof}
By Lemma~\ref{lem:zeroentropy} we have $h(\phi_1,\cH)=0$.
By Proposition~\ref{pp:Franco} we have
\[
	\lim_{T\to\infty}\log\#\Per(\psi,T)=h(\psi)>0.
\]
Hence, by Lemma~\ref{lem:projection} applied to $T_1M$ and~\eqref{varprinc} we have $h(\phi_1)=h(\psi_1)=h(\psi)>0$.

Since $h(\psi,\chi(\cH))=0$ by the above  and by Lemma~\ref{lem:projection}, by definition~\eqref{def:entropia} of the entropy on $\chi(\cH)$, for any $\delta\in(0,h(\psi))$ there exists $\varepsilon>0$ sufficiently small (and smaller than an expansivity constant for $\psi$) and $T_0=T(\varepsilon)$ sufficiently large, such that for every $T\ge T_0$ we have
\[
	M(T,\varepsilon,\chi(\cH),\psi)
	\le e^{T\delta}.
\] 
By~\eqref{def:perorbmeanumbers}, we conclude
\[
	\Per(\psi,T-\varepsilon,T+\varepsilon)\cap \chi(\cH)
	\le M(T,\varepsilon,\chi(\cH),\psi)
	\le e^{T\delta}
\]Since by Proposition~\ref{pp:Franco} we have
\[
	\lim_{T\to\infty}\frac1T\log\#\Per(\psi,T-\varepsilon,T+\varepsilon)
	= h(\psi)>0,
\]
the above implies
\[
	\lim_{T\to\infty}\frac1T\log\#\Per(\psi,T-\varepsilon,T+\varepsilon)\cap\chi(\cR)
	= h(\psi)>0
\]
from which we conclude
\[
	\lim_{T\to\infty}\frac1T\log\#\Per(\psi,T)\cap\chi(\cR)
	= h(\psi).
\]
Since $\chi$ is a homeomorphism on $\cR$ which preserves parametrization, this and the above immediately imply
\[
	\lim_{T\to\infty}\frac1T\log\#\Per_{\cR}(T)=h(\psi)= h(\phi_1).
\]
This proves the corollary.
\end{proof}

\section*{Notations}

\begin{equation*}
  \begin{aligned}
	M \hspace{0.1cm}
&	\overset{\pi}{\longleftarrow}
&	\sigma_t^{\bar\theta}\colon \tilde M \to \tilde M,\,\, H^\pm(\bar\eta) \hspace{0.8cm}
&
\\[0.5cm]
 	\theta=(p,v)\in T_1M	 \hspace{1.3cm}			
&	
&	\bar\theta=(p,v)\in T_1\tilde M 	\hspace{1.3cm}	
&\\
	\phi_t\colon T_1M \to T_1M, \,\,\mathscr F^\ast(\theta) 		
&	\overset{\bar\pi}{\longleftarrow}	
&	\phi_t\colon T_1\tilde M \to T_1\tilde M,\,\,\tilde{\mathscr F}^\ast(\bar\theta)
&	\quad \ast=\s,\u,\cs,\cu\\
       	\downarrow \chi \hspace{2.9cm}	
&
&	\downarrow \bar\chi \hspace{2.9cm}	
&
\\	
        \psi_t\colon X \to X,\,\,W^\ast([\theta]) 	\hspace{0.6cm}		
&	\overset{\bar\Pi}{\longleftarrow}	
&	\bar\psi_t\colon \bar X\to \bar X,\,\,\tilde W^\ast([\bar\theta]) \hspace{0.6cm}
&\quad \ast=\ss,\uu,\cs,\cu
  \end{aligned}
\end{equation*}

\end{document}